\newtheorem{theorem}{Theorem}[section]
\newtheorem{definition}[theorem]{Definition}
\newtheorem{proposition}[theorem]{Proposition}
\newtheorem{conjecture}[theorem]{Conjecture}
\begin{document}

\title[Higher orbitals]{Higher orbitals of quizzy quantum group actions}

\author[T. Banica]{Teodor Banica}
\address{T.B.: Department of Mathematics, University of Cergy-Pontoise, F-95000 Cergy-Pontoise, France. {\tt teo.banica@gmail.com}}

\begin{abstract}
The hyperoctahedral group $H_N$ is known to have two natural liberations: the ``good'' one $H_N^+$, which is the quantum symmetry group of $N$ segments, and the ``bad'' one $\bar{O}_N$, which is the quantum symmetry group of the $N$-hypercube. We study here this phenomenon, in the general ``quizzy'' framework, which covers the various liberations and twists of $H_N,O_N$. Our results include: (1) an interpretation of the embedding $\bar{O}_N\subset S_{2^N}^+$, as corresponding to the antisymmetric representation of $O_N$, (2) a study of the liberations of $H_N$, notably with the result $<H_N^+,\bar{O}_N>=O_N^+$, and (3) a comparison of the $k$-orbitals for the inclusions $H_N\subset H_N^+$ and $H_N\subset\bar{O}_N$, for $k\in\mathbb N$ small. 
\end{abstract}

\subjclass[2010]{46L65 (46L54)}
\keywords{Twisted orthogonal group, Quantum orbital}

\maketitle

\section*{Introduction}

According to the quantum algebra theory, the twisted analogue of the commutation relations $ab=ba$ between the coordinates $x_1,\ldots,x_N$ of our ambient space $\mathbb R^N$ is:
$$ab=\begin{cases}
-ba&{\rm for}\ a\neq b\\
ba&{\rm otherwise}
\end{cases}$$

At the matrix level, the twisted analogue of the commutation relations $ab=ba$ between the standard coordinates $u_{11},\ldots,u_{NN}$ of the matrix algebra $M_N(\mathbb R)$ is:
$$ab=\begin{cases}
-ba&{\rm for}\ a\neq b\ {\rm on\ the\ same\ row\ or\ column\ of\ }u\\
ba&{\rm otherwise}
\end{cases}$$

These latter relations $\mathcal R$ can be used in order to construct a twisted analogue of the orthogonal group $O_N$, as abstract spectrum of the following universal algebra:
$$C(\bar{O}_N)=C^*\left((u_{ij})_{i,j=1,\ldots,N}\Big|u=\bar{u},u^t=u^{-1},\mathcal R\right)$$

Generally speaking, the structure of $\bar{O}_N$ is quite similar to that of $O_N$, with the correspondence $O_N\leftrightarrow\bar{O}_N$ being best understood via Schur-Weyl twisting, or via a cocycle deformation method. There are many algebraic, geometric, analytic and probabilistic results which can be obtained in this way, and all this material is quite standard.

One interesting feature of $\bar{O}_N$, however, which escapes the philosophy of the correspondence $O_N\leftrightarrow\bar{O}_N$, is that this appears as quantum symmetry group of the standard hypercube in $\mathbb R^N$. This phenomenon was discovered about 10 years ago, in \cite{bbc}, and has been since the subject of various investigations, with variable degree of success.

Our purpose here is to advance on this question, with a number of new results regarding the action $\bar{O}_N\curvearrowright\{1,\ldots,2^N\}$, and its relation with the action $H_N\curvearrowright\{1,\ldots,2^N\}$, notably by using the recent quantum orbital theory from \cite{ba3}, \cite{bi2}, \cite{lmr}.

\bigskip

As a general framework, we use the theory of easy quantum groups \cite{bsp}, \cite{rwe}, in its modified ``quizzy'' version, from \cite{ba1}, \cite{ba2}. The idea is that any intermediate easy quantum group $H_N\subset G\subset O_N^+$ can be $q$-deformed at $q=-1$, into a certain intermediate quantum group $H_N\subset\bar{G}\subset O_N^+$. The easy quantum groups and their twists $H_N\subset\dot{G}\subset O_N^+$ are $q$-easy in some natural sense, with $q=\pm1$, and are called ``quizzy''.

The quizzy quantum groups can be fully classified, by starting with the list in \cite{rwe}, and twisting. According to \cite{ba2}, these quantum groups are as follows:
$$\xymatrix@R=7mm@C=20mm{
&\bar{O}_N\ar[r]&\bar{O}_N^*\ar[rd]\\
H_N\ar@.[r]\ar[ur]\ar[rd]&H_N^\times\ar@.[r]&H_N^+\ar[r]&O_N^+\\
&O_N\ar[r]&O_N^*\ar[ru]}$$

To be more precise, here $O_N^\times$ are the various versions of $O_N$, obtained via liberation and twisting, and $H_N^\times$ are various versions of the hyperoctahedral group $H_N$, which are known to be equal to their own twists. There are in fact many such quantum groups $H_N^\times$, as explained in \cite{rwe}, and the dotted arrows in the middle stand for that.

\bigskip

Now back to our questions, the above diagram, fully covering the liberations and twists of $H_N,O_N$, is precisely what we need. We will study the quantum groups appearing there, from an algebraic and probabilistic point of view, our results being as follows:
\begin{enumerate}
\item We will first study the representation $\bar{O}_N\subset S_{2^N}^+$, our  main result here being that this corresponds to the antisymmetric representation of $O_N$. We will compute as well the magic unitary matrix of this representation, and its character.

\item The fact that $H_N$ has at least two liberations, namely $H_N^+$ and $\bar{O}_N$, raises a number of theoretical questions, regarding the concept of liberation. We will advance here with a negative result, stating that we have $<H_N,\bar{O}_N>=O_N^+$. 

\item The known quizzy quantum group actions on finite sets are those coming from the embeddings $H_N^+\subset S_{2N}^+$ and $\bar{O}_N\subset S_{2^N}^+$. We will compare here the $k$-orbitals for the liberation inclusions $H_N\subset H_N^+$ and $H_N\subset\bar{O}_N$, for $k\in\mathbb N$ small. 
\end{enumerate}

There are of course many questions left. As an interesting algebraic question, for instance, we conjecture that the above embeddings $H_N^+\subset S_{2N}^+$ and $\bar{O}_N\subset S_{2^N}^+$ are the unique ones which can make a quizzy quantum group a quantum permutation one.

\bigskip

We refer to the body of the paper for the precise formulation of the results, which are often quite technical, and sometimes rely on some new notions, to be introduced here.

\bigskip

The paper is organized as follows: 1-2 contain various preliminaries and generalities, in 3-4 we study the quantum permutation representation of $\bar{O}_N$, in 5-6 we discuss liberation questions, in connection with the notion of quantum orbitals, and in 7-8 we present our counting results for orbitals, and we discuss some open questions.

\bigskip

\noindent {\bf Acknowledgements.} I would like to Poulette and her friends, for sharing with me some of their knowledge, and for general advice and support. 

\section{Twisted orthogonality}

We use Woronowicz's quantum group formalism in \cite{wo1}, \cite{wo2}, under the extra assumption $S^2=id$. To be more precise, the definition that we will need is:

\begin{definition}
Assume that $(A,u)$ is a pair consisting of a $C^*$-algebra $A$, and a unitary matrix $u\in M_N(A)$ whose coefficients generate $A$, such that the formulae
$$\Delta(u_{ij})=\sum_ku_{ik}\otimes u_{kj}\quad,\quad \varepsilon(u_{ij})=\delta_{ij}\quad,\quad S(u_{ij})=u_{ji}^*$$
define morphisms of $C^*$-algebras $\Delta:A\to A\otimes A$, $\varepsilon:A\to\mathbb C$, $S:A\to A^{opp}$. We write then $A=C(G)$, and call $G$ a compact matrix quantum group.
\end{definition}

The basic examples are the compact Lie groups, $G\subset U_N$. Indeed, given such a group we can set $A=C(G)$, and let $u_{ij}:G\to\mathbb C$ be the standard coordinates, $u_{ij}(g)=g_{ij}$. The axioms are then satisfied, with $\Delta,\varepsilon,S$ being the functional analytic transposes of the multiplication $m:G\times G\to G$, unit map $u:\{.\}\to G$, and inverse map $i:G\to G$.

There are many other interesting examples of such quantum groups, for the most going back to \cite{wo1}, \cite{wo2}. For a general introduction to the subject, in connection with what we will be doing here, we refer to the book \cite{ntu}, or to the lecture notes \cite{ba4}.

The following key construction is due to Wang \cite{wa1}:

\begin{proposition}
We have a compact quantum group $O_N^+$, defined as follows:
$$C(O_N^+)=C^*\left((u_{ij})_{i,j=1,\ldots,N}\Big|u=\bar{u},u^t=u^{-1}\right)$$
This quantum group contains $O_N$, and the inclusion $O_N\subset O_N^+$ is not an isomorphism.
\end{proposition}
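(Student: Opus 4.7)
The plan is to establish four things in turn: existence of the universal $C^*$-algebra $C(O_N^+)$, the compact matrix quantum group structure from Definition 1.1, the embedding $O_N\subset O_N^+$, and the strictness of that embedding for $N\ge 2$. The first three are straightforward applications of universality, and the main obstacle lies in the fourth.

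For existence, I would observe that the relation $u^t u=1$ written out as $\sum_k u_{ki}^*u_{ki}=1$, combined with $u=\bar u$, forces $\|\pi(u_{ij})\|\le 1$ in any $*$-representation $\pi$ of the abstract $*$-algebra presented by the given relations. The supremum of admissible $C^*$-seminorms is therefore finite on the generators, and the completion defines $C(O_N^+)$.

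For the Hopf structure I would apply the universal property three times. For $\Delta$, the matrix $U_{ij}=\sum_k u_{ik}\otimes u_{kj}$ in $M_N(C(O_N^+)\otimes C(O_N^+))$ is entrywise self-adjoint and satisfies $UU^t=U^tU=1$ by a direct computation from $uu^t=u^tu=1$. For $\varepsilon$, the identity matrix trivially satisfies the relations. For $S$, the matrix $W_{ij}=u_{ji}^*$ regarded in $M_N(C(O_N^+)^{opp})$ is self-adjoint entrywise because $u=\bar u$, and it satisfies $W^tW=WW^t=1$ in the opposite algebra precisely because $u^t=u^{-1}$ holds in $C(O_N^+)$. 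The inclusion $O_N\subset O_N^+$ is then immediate: the coordinates on $O_N$ are real-valued, commuting, and orthogonal, hence a particular solution of the defining relations, so universality gives a surjective Hopf $*$-morphism $C(O_N^+)\twoheadrightarrow C(O_N)$ that dualises to the required embedding.

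The main step is to show that this surjection has nontrivial kernel for $N\ge 2$, i.e.\ that $C(O_N^+)$ is noncommutative. I would reduce to $N=2$ via the block-diagonal embedding sending an $O_2^+$-solution $u$ to $\mathrm{diag}(u,1_{N-2})$, which transports noncommutativity upward. For $N=2$, the orthogonality produces quadratic constraints such as $u_{11}^2+u_{12}^2=1$ together with cross relations like $u_{11}u_{12}+u_{21}u_{22}=0$, which prevent taking the off-diagonal entries to be free self-adjoint elements; the obstacle is precisely to produce a consistent $*$-representation in which commutators of the $u_{ij}$ are nonzero. I would resolve this either by invoking a known identification of $C(O_2^+)$ with a noncommutative quantum group algebra of Wang-type, or by constructing the representation by hand on the reduced $C^*$-algebra of a free group, using a suitable pair of unitary generators and functional calculus to build a matrix satisfying the relations, and then verifying noncommutativity by evaluating a single explicit commutator on a cyclic vector.
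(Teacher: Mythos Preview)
Your treatment of existence, the Hopf structure, and the surjection $C(O_N^+)\twoheadrightarrow C(O_N)$ matches the paper's argument essentially verbatim.

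Where you diverge is in the noncommutativity step, and here the paper's route is both simpler and complete, whereas yours is left as a sketch. You reduce to $N=2$, note that the quadratic relations obstruct a naive free-generator model, and then propose either citing a structural identification of $C(O_2^+)$ or building a representation ``by hand'' via functional calculus on a free group algebra---neither of which you actually carry out. The paper sidesteps all of this with a one-line trick: take the diagonal matrix $v=\mathrm{diag}(g_1,\ldots,g_N)$ over $C^*(\mathbb Z_2^{*N})$, where $g_1,\ldots,g_N$ are the canonical free generators. Each $g_i$ is self-adjoint with $g_i^2=1$, so $v=\bar v$ and $v^tv=vv^t=1$; universality then gives a surjection $C(O_N^+)\to C^*(\mathbb Z_2^{*N})$, and the latter is noncommutative for $N\ge 2$.

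The point is that by making the matrix \emph{diagonal} you decouple the orthogonality constraints into independent conditions $g_i^2=1$, so you never have to confront the cross relations $u_{11}u_{12}+u_{21}u_{22}=0$ that you flagged as the obstacle. Your block-diagonal reduction to $N=2$ is unnecessary, and the vague appeal to ``a known identification'' or an unspecified free-group construction is replaced by a concrete two-line calculation.
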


\begin{proof}
It is routine to check that if a matrix $u=(u_{ij})$ is orthogonal ($u=\bar{u},u^t=u^{-1}$), then so are the matrices $u^\Delta=(\sum_ku_{ik}\otimes u_{kj})$, $u^\varepsilon=(\delta_{ij})$, $u^S=(u_{ji})$. Thus we can construct $\Delta,\varepsilon,S$ as in Definition 1.1, by using the universal property of $C(O_N^+)$. 

Regarding the last assertion, we have indeed a quotient map $C(O_N^+)\to C(O_N)$. Now observe that if we denote by $g_1,\ldots,g_N$ the generators of $\mathbb Z_2^{*N}$, the matrix $v=diag(g_i)$ is orthogonal. Thus we have as well a quotient map $C(O_N^+)\to C^*(\mathbb Z_2^{*N})$, the algebra $C(O_N^+)$ follows to be not commutative, and so $O_N\neq O_N^+$, as claimed. See \cite{wa1}.
\end{proof}

We are interested here in the twisted orthogonal group $\bar{O}_N$. In order to introduce this quantum group, we can proceed as follows:

\begin{proposition}
We have the following results:
\begin{enumerate}
\item The usual orthogonal group $O_N\subset O_N^+$ is obtained by assuming that the standard coordinates $u_{ij}$ pairwise commute.

\item We have a quantum group $\bar{O}_N\subset O_N^+$, obtained by assuming that the coordinates $u_{ij}$ anticommute on the rows and columns of $u$, and commute otherwise.
\end{enumerate}
\end{proposition}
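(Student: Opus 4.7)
The plan is to verify, in each case, that the extra relations we impose on the fundamental corepresentation $u$ are compatible with the Hopf $*$-algebra structure maps $\Delta,\varepsilon,S$ already constructed on $C(O_N^+)$, so that they descend to well-defined maps on the quotient. Once this is done, the defining universal property produces the desired compact matrix quantum group. In both cases the strategy is the same as in the proof of Proposition 1.2: show that if $u$ satisfies a given relation $R$, then so do $u^\Delta, u^\varepsilon, u^S$, and then apply the universal property of the quotient.

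For (1), the classical case, everything is essentially formal. The quotient of $C(O_N^+)$ by the commutation relations $[u_{ij},u_{kl}]=0$ is a commutative $C^*$-algebra, generated by coordinates of an orthogonal matrix, hence by Gelfand duality equals $C(O_N)$. Compatibility with $\Delta,\varepsilon,S$ is clear because a product of two elements of the form $u_{ij}\otimes u_{kl}$ lies in a commutative sub-$*$-algebra of $C(O_N)\otimes C(O_N)$, and because commutation is stable under the flip $u_{ij}\mapsto u_{ji}$ used by the antipode. This already gives the quantum subgroup embedding $O_N\subset O_N^+$.

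For (2), the real content, I would first spell out the relation set $\mathcal R$ as: $u_{ij}u_{kl}+u_{kl}u_{ij}=0$ if $(i,j)\neq(k,l)$ and $i=k$ or $j=l$, and $u_{ij}u_{kl}-u_{kl}u_{ij}=0$ otherwise. The key computation is to check that the entries of $u^\Delta_{ij}=\sum_s u_{is}\otimes u_{sj}$ satisfy $\mathcal R$. Take two entries in the same row $i$, say $u^\Delta_{ij}$ and $u^\Delta_{ij'}$ with $j\neq j'$. Expand
\[
u^\Delta_{ij}\,u^\Delta_{ij'}=\sum_{s,t}u_{is}u_{it}\otimes u_{sj}u_{tj'}.
\]
Now split according to $s=t$ or $s\neq t$. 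For $s\neq t$, the left factor anticommutes (same row $i$, distinct columns $s,t$), and the right factor commutes (distinct rows $s\neq t$ and distinct columns $j\neq j'$), producing a global minus sign; relabelling $s\leftrightarrow t$ converts this into the corresponding term of $-u^\Delta_{ij'}u^\Delta_{ij}$. For $s=t$, the right factor anticommutes (same row $s$, distinct columns $j,j'$), and the left factor is a square commuting with itself, again giving the required minus sign. Columns of $u^\Delta$ are handled symmetrically, and the ``otherwise'' case uses the same case split with different parity bookkeeping. For $u^\varepsilon=(\delta_{ij})$ the relations are trivially satisfied; for $u^S=(u_{ji}^*)=(u_{ji})$ the relations are preserved because $\mathcal R$ is stable under transposition of row and column indices and because antimultiplicativity of $S$ turns $ab=\pm ba$ into $S(b)S(a)=\pm S(a)S(b)$, the same relation.

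The main obstacle is the bookkeeping in the $\Delta$-compatibility step: one has to run the case split cleanly enough that every pair $(s,t)$ produces exactly the sign demanded by $\mathcal R$. Once this is organized, the rest follows mechanically from the universal property, giving the desired quantum subgroup $\bar O_N\subset O_N^+$.
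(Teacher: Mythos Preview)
Your proposal is correct and follows essentially the same approach as the paper. The paper's own proof is extremely terse---for (1) it uses the commutator-ideal/Gelfand argument you give, and for (2) it simply asserts that $u^\Delta,u^\varepsilon,u^S$ inherit the relations ``as in the proof of Proposition 1.2''---so your write-up is really a fleshed-out version of the same argument, with the $s=t$ versus $s\neq t$ case split for $u^\Delta$ made explicit.
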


\begin{proof}
Once again, this is something standard, the proof being as follows:

(1) Consider the quantum group $O_N'\subset O_N^+$ obtained by dividing $C(O_N^+)$ by its commutator ideal. We have then $O_N\subset O_N'$. On the other hand, $O_N'$ must be a classical space, and by using the coordinates $u_{ij}$ we obtain $O_N'\subset O_N$. Thus $O_N=O_N'$, as claimed.

(2) This follows as in the proof of Proposition 1.2 above, the idea being that if $u=(u_{ij})$ satisfies the relation in the statement, then so do the matrices $u^\Delta=(\sum_ku_{ik}\otimes u_{kj})$, $u^\varepsilon=(\delta_{ij})$, $u^S=(u_{ji})$. Thus we can indeed construct $\Delta,\varepsilon,S$, as claimed. 
\end{proof}

Summarizing, $\bar{O}_N$ appears as a kind of ``twisted counterpart'' of $O_N$. In order to further comment on the definition of $\bar{O}_N$, and on the correspondence $O_N\leftrightarrow\bar{O}_N$, best is to use quantum isometries. Let us recall that the free real sphere $S^{N-1}_{\mathbb R,+}$ is the noncommutative compact space having as coordinates self-adjoint variables $x_1,\ldots,x_N$, subject to the relation $\sum_ix_i^2=1$. We have the following definition, coming from \cite{ba2}, \cite{gos}:

\begin{definition}
A closed subgroup $G\subset O_N^+$ is said to be acting affinely on an algebraic submanifold $X\subset S^{N-1}_{\mathbb R,+}$ when we have a morphism of algebras, as follows:
$$\Phi:C(X)\to C(G)\otimes C(X)\quad,\quad x_i\to\sum_ju_{ij}\otimes x_j$$
The biggest closed quantum group $G\subset O_N^+$ acting affinely on $X$ is called affine orthogonal quantum isometry group of $X$, and is denoted $G^+(X)$.
\end{definition}

It is elementary to check that the usual sphere $S^{N-1}_\mathbb R\subset S^{N-1}_{\mathbb R,+}$ appears by imposing to the coordinates the relations $ab=ba$. So, let us define a twisted sphere $\bar{S}^{N-1}_\mathbb R\subset S^{N-1}_{\mathbb R,+}$ by assuming that the coordinates are subject to the following relations:
$$ab=\begin{cases}
-ba&{\rm for}\ a\neq b\\
ba&{\rm otherwise}
\end{cases}$$

We have the following result, which provides an alternative definition for $\bar{O}_N$:

\begin{proposition}
We have a diagram as follows,
$$\xymatrix@R=7mm@C=10mm{
&&S^{N-1}_{\mathbb R,+}\\
&&O_N^+\ar@{.}[u]\\
\bar{S}^{N-1}_{\mathbb R}\ar[uurr]\ar@{.}[r]&\bar{O}_N\ar[ur]&&O_N\ar[ul]\ar@{.}[r]&S^{N-1}_\mathbb R\ar[uull]
}$$
with the dotted lines standing for the quantum isometry group construction.
\end{proposition}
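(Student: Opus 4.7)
The plan is to verify the three identifications encoded by the dotted lines: $O_N^+ = G^+(S^{N-1}_{\mathbb R,+})$, $O_N = G^+(S^{N-1}_\mathbb R)$, and $\bar{O}_N = G^+(\bar{S}^{N-1}_\mathbb R)$. The solid arrows in the diagram are already established: the sphere inclusions come from the defining quotient maps between the coordinate algebras, and the group inclusions $\bar{O}_N,O_N\subset O_N^+$ are from Proposition~1.3. So only the three quantum isometry group claims need to be worked out.

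For the universal case $O_N^+ = G^+(S^{N-1}_{\mathbb R,+})$, the strategy is standard: given a closed $G\subset O_N^+$ with a candidate affine action $\Phi(x_i)=\sum_j u_{ij}\otimes x_j$, apply $\Phi$ to $\sum_i x_i^2=1$ and require equality with $1\otimes 1$. Since the monomials $x_jx_k$ are sufficiently generic in $C(S^{N-1}_{\mathbb R,+})$, this forces $\sum_i u_{ij}u_{ik}=\delta_{jk}$, i.e.\ $u^tu=\mathbf 1$; self-adjointness of the $x_i$ forces $u=\bar u$. Conversely $O_N^+$ visibly acts affinely on $S^{N-1}_{\mathbb R,+}$ via its own matrix of coordinates.

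For the classical case $O_N=G^+(S^{N-1}_\mathbb R)$, I would push the relations $x_ax_b=x_bx_a$ through $\Phi$ and conclude that $u_{ai}u_{bj}=u_{bj}u_{ai}$ for all indices, using linear independence of distinct monomials in $C(S^{N-1}_\mathbb R)$ (modulo the scalar sphere relation). Hence the entries of $u$ commute pairwise, and by Proposition~1.3(1) the subgroup $G$ lies inside $O_N$; the converse is obvious.

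The main work is the twisted case $\bar O_N=G^+(\bar S^{N-1}_\mathbb R)$. The plan is to apply $\Phi$ to the twisted relations $x_ax_b+x_bx_a=0$ (for $a\neq b$), then expand each side as $\sum_{i,j}u_{ai}u_{bj}\otimes x_ix_j+\sum_{i,j}u_{bi}u_{aj}\otimes x_ix_j$ and reorganize the tensor factor into the basis $\{x_ix_j: i<j\}\cup\{x_i^2\}$ by using the twist $x_jx_i=-x_ix_j$ for $i\neq j$. Collecting the coefficient of $x_i^2$ produces anticommutation of the column-$i$ entries $u_{ai},u_{bi}$, and collecting the coefficient of $x_ix_j$ for $i\neq j$, then symmetrizing in the pair $(i,j)$ and in $(a,b)$, separates into anticommutation along rows and commutation in all other positions. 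The hard part, which I expect to be the main obstacle, is the careful sign bookkeeping when converting $x_jx_i$ into $-x_ix_j$ and checking that the resulting system on $(u_{ij})$ coincides exactly with the relations $\mathcal R$ of the introduction, with no extra constraints and no missing ones. Once this is in place, the converse inclusion $\bar O_N\subset G^+(\bar S^{N-1}_\mathbb R)$ is immediate, since $\mathcal R$ was tailored so that $\Phi$ descends to $\bar S^{N-1}_\mathbb R$.
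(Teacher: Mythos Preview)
Your overall strategy matches the paper's --- verify each dotted line by pushing the sphere relations through the coaction $\Phi$ and reading off constraints on the $u_{ij}$. The paper does not spell out the details either; it says the free case is clear, the classical case is ``known since \cite{bgo}, and follows by using some algebraic tricks'', and the twisted case ``can be proved in a similar way, by using the same tricks'', referring to \cite{ba1}.

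There is, however, a genuine gap in your plan for both the classical and the twisted cases. You propose to extract the individual (anti)commutation relations on the $u_{ij}$ by comparing coefficients against the monomials $x_ix_j$, treating these as essentially linearly independent. They are not independent enough. In $C(S^{N-1}_\mathbb R)$ one has $x_ix_j=x_jx_i$ and $\sum_ix_i^2=1$, so from $\Phi([x_a,x_b])=0$ you only obtain
\[
[u_{ai},u_{bj}]+[u_{aj},u_{bi}]=0\quad(i\neq j),\qquad [u_{ai},u_{bi}]=\lambda\ \text{independent of }i,
\]
and orthogonality gives $\lambda=0$; but the off-diagonal relation is a \emph{symmetrized} commutator, not $[u_{ai},u_{bj}]=0$ individually. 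Exactly the same phenomenon occurs in the twisted case: your coefficient of $x_ix_j$ for $i<j$ yields only $[u_{ai},u_{bj}]=[u_{aj},u_{bi}]$, not the separate commutation/anticommutation rules of $\bar O_N$, and the diagonal coefficients are tied together by $\sum_ix_i^2=1$. So the ``hard part'' is not the sign bookkeeping you anticipate; it is decoupling these symmetrized relations. This is precisely what the paper means by ``algebraic tricks'': in \cite{bgo}, \cite{ba1} one uses the antipode (which swaps rows and columns of $u$), or passes to degree~$3$ relations, together with orthogonality, to separate the mixed terms and recover the pointwise relations defining $O_N$ and $\bar O_N$. Your proposal should incorporate that step.
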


\begin{proof}
Here the result for $S^{N-1}_{\mathbb R,+}$ is clear from definitions, the result for $S^{N-1}_\mathbb R$ is known since \cite{bgo}, and follows by using some algebraic tricks, and the result for $\bar{S}^{N-1}_\mathbb R$ can be proved in a similar way, by using the same tricks. For details on all this, see \cite{ba1}.
\end{proof}

Summarizing, we have a quite reasonable understanding of the definition of $\bar{O}_N$. There are as well some more advanced ways of understanding the correspondence $O_N\leftrightarrow\bar{O}_N$, via cocycle twists \cite{bbc}, or via Schur-Weyl twists \cite{ba1}. We will be back to this, later.

Let us review now the quantum isometry result in \cite{bbc}, which is something quite non-standard. Consider the group $\mathbb Z_2^N=<g_1,\ldots,g_N>$. Since the standard generators $g_i\in C^*(\mathbb Z_2^N)=C(\widehat{\mathbb Z_2^N})$ satisfy $g_i=g_i^*,g_i^2=1$, we have an embedding $\widehat{\mathbb Z_2^N}\subset S^{N-1}_{\mathbb R,+}$ given by $z_i=\frac{g_i}{\sqrt{N}}$. Moreover, the image of this embedding is the standard hypercube $K_N$.

With these notions in hand, we can review the result in \cite{bbc}, as follows:

\begin{theorem}
With $\mathbb Z_2^N=<g_1,\ldots,g_N>$ we have a coaction map
$$\Phi:C^*(\mathbb Z_2^N)\to C(\bar{O}_N)\otimes C^*(\mathbb Z_2^N)\quad,\quad g_i\to\sum_ju_{ij}\otimes g_j$$
which makes $\bar{O}_N$ the quantum isometry group of the hypercube $K_N=\widehat{\mathbb Z_2^N}$, as follows:
\begin{enumerate}
\item With $K_N$ viewed as an algebraic manifold, $K_N\subset S^{N-1}_\mathbb R\subset S^{N-1}_{\mathbb R,+}$.

\item With $K_N$ viewed as a graph, with $2^N$ vertices and $2^{N-1}N$ edges.

\item With $K_N$ viewed as a metric space, with the distance from $\mathbb R^N$.
\end{enumerate}
\end{theorem}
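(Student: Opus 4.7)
My plan has two parts: (i) verify that the formula $\Phi(g_i)=\sum_ju_{ij}\otimes g_j$ really defines a coaction, and (ii) establish the universality of $\bar{O}_N$ in each of the three senses (1), (2), (3).

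For (i), set $G_i=\sum_ju_{ij}\otimes g_j\in C(\bar{O}_N)\otimes C^*(\mathbb Z_2^N)$. I need to check the defining relations of $C^*(\mathbb Z_2^N)$, namely $G_i^*=G_i$, $G_i^2=1$, and $[G_i,G_j]=0$, and the universal property of $C^*(\mathbb Z_2^N)$ then extends $\Phi$ uniquely. Self-adjointness is immediate from $u_{ij}=u_{ij}^*$ and $g_j=g_j^*$. Expanding
\[
G_i^2=\sum_ju_{ij}^2\otimes 1+\sum_{j<k}(u_{ij}u_{ik}+u_{ik}u_{ij})\otimes g_jg_k,
\]
the first term equals $1\otimes 1$ by orthogonality, and the second vanishes precisely because of the row anticommutation relations defining $\bar{O}_N$. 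A similar expansion of $[G_i,G_j]$ along the linear basis $\{g_S\}_{S\subset\{1,\dots,N\}}$ leaves, after using $(uu^t)_{ij}=(uu^t)_{ji}=\delta_{ij}$, the requirement $[u_{ik},u_{jl}]+[u_{il},u_{jk}]=0$ for $i\neq j,\ k<l$, which holds in $\bar{O}_N$ because on different rows and different columns the entries commute. Comultiplicativity, counit, and antipode compatibility of $\Phi$ with the $\bar{O}_N$ structure then follow from the universal property in the usual way.

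For (ii.1), let $G\subset O_N^+$ act affinely on $K_N\subset S^{N-1}_{\mathbb R}$, and set $Z_i=\sum_ju_{ij}\otimes z_j$. The defining ideal of $C(K_N)$ in $C(S^{N-1}_{\mathbb R,+})$ is generated by $z_i^2-1/N$ and $[z_i,z_j]$, so $\Phi$ extends iff $Z_i^2=1/N$ and $[Z_i,Z_j]=0$ hold in $C(G)\otimes C(K_N)$. Expanding both relations in the basis $\{g_S\}$, the coefficient of each $g_S$ must vanish, and this yields exactly the $\bar{O}_N$ row anticommutation together with the mixed relation from (i); then a careful combination of these relations with orthogonality (invoking $u^tu=uu^t=1$ to rewrite products in two different ways, and looking also at what preservation of $\Phi(z_iz_jz_k)$ forces on triple products $u_{ia}u_{jb}u_{kc}$) should force the full set of $\bar{O}_N$ relations, including column anticommutation and mixed commutation. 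For (ii.2) and (ii.3), the point is that the same formula for $\Phi$ already realizes $\bar{O}_N$ as a quantum isometry group in these stronger senses, so it suffices to observe that any action compatible with the graph adjacency matrix or with the Euclidean distance on $K_N\subset\mathbb R^N$ restricts to an action on $K_N$ as an algebraic manifold (so case (1) applies), and conversely that the action constructed above preserves the extra structure, which follows from the linearity of $\Phi$ and the fact that $\bar{O}_N\subset O_N^+$ consists of genuine isometries of $\mathbb R^N$ at the classical level.

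The main obstacle is clearly the algebraic closing-up in (ii.1): extracting column anticommutation and mixed commutation from row anticommutation plus orthogonality plus the symmetric commutator identity $[u_{ik},u_{jl}]=-[u_{il},u_{jk}]$. This is the delicate step, and I expect one needs to expand $\Phi$ on monomials $g_{i_1}g_{i_2}\cdots g_{i_r}$ for $r\geq 3$ and exploit the resulting overdetermined system, rather than stopping at quadratic relations; this is the kind of calculation that is carried out in detail in \cite{bbc}.
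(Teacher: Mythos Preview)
Your verification in part (i) and your expansion of the relations $G_i^2=1$, $[G_i,G_j]=0$ in (ii.1) match the paper's argument. However, your proposed route for closing the algebraic gap in (ii.1) --- expanding $\Phi$ on monomials $g_{i_1}\cdots g_{i_r}$ with $r\geq 3$ --- cannot yield anything: since $C^*(\mathbb Z_2^N)$ is presented by the quadratic relations $g_i^2=1$ and $g_ig_j=g_jg_i$, every identity among the $G_i$ is a formal consequence of $G_i^2=1$ and $[G_i,G_j]=0$, so higher monomials (and your ``$\Phi(z_iz_jz_k)$'' constraints) carry no new information. What is actually used is the Hopf algebra structure: the universal $G\subset O_N^+$ acting affinely on $K_N$ is itself a compact quantum group, so the antipode $S(u_{ij})=u_{ji}$ is available, and applying $S$ to the row anticommutation immediately produces column anticommutation; the remaining mixed commutation then follows from a further manipulation combining these with orthogonality. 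The paper's own proof is equally terse here, writing only ``by working out these relations'' and deferring to \cite{bbc}, but the mechanism is not the overdetermined cubic system you suggest.

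Your reduction of (ii.2) and (ii.3) to (ii.1) has a more substantive gap. A quantum symmetry of the hypercube \emph{as a graph} is a priori only a closed subgroup of $S_{2^N}^+$ commuting with the adjacency matrix; there is no given embedding into $O_N^+$ and hence no affine action on $K_N\subset S^{N-1}_{\mathbb R,+}$, so you cannot directly invoke (1). The key idea, which the paper signals by pointing to the Cayley graph structure, is spectral: under the Fourier isomorphism $C(K_N)\simeq C^*(\mathbb Z_2^N)$ the adjacency operator becomes multiplication by $g_1+\cdots+g_N$, whose eigenspaces are spanned by the degree-$k$ monomials $g_{i_1}\cdots g_{i_k}$. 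Any quantum graph symmetry must preserve each eigenspace, in particular the $N$-dimensional one spanned by $g_1,\ldots,g_N$; this gives an orthogonal corepresentation on $\mathbb C^N$ and hence a morphism to $O_N^+$, after which (1) applies. Your converse direction is also off: $\bar{O}_N$ does not ``consist of genuine isometries of $\mathbb R^N$ at the classical level'' --- it is genuinely noncommutative --- and this is not what is needed. The correct converse is simply that the coaction $\Phi$ manifestly preserves the above eigenspace decomposition, hence commutes with the adjacency operator; part (3) then follows because the Euclidean metric on $K_N$ is determined by the graph metric.
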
 

\begin{proof}
This result is basically from \cite{bbc}, the proof being as follows:

(1) In order for $G\subset O_N^+$ to act affinely on $K_N$, the variables $G_i=\sum_ju_{ij}\otimes g_j$ must satisfy the same relations as the generators $g_i\in \mathbb Z_2^N$. The self-adjointness being automatic, the relations to be checked are therefore $G_i^2=1,G_iG_j=G_jG_i$. We have:
\begin{eqnarray*}
G_i^2&=&\sum_{kl}u_{ik}u_{il}\otimes g_kg_l=1+\sum_{k<l}(u_{ik}u_{il}+u_{il}u_{ik})\otimes g_kg_l\\
\left[G_i,G_j\right]&=&\sum_{k<l}(u_{ik}u_{jl}-u_{jk}u_{il}+u_{il}u_{jk}-u_{jl}u_{ik})\otimes g_kg_l
\end{eqnarray*}

By working out these relations, this gives $G\subset\bar{O}_N$, as claimed.

(2) This follows from the fact that $K_N$ is the Cayley graph of $\mathbb Z_2^N$.

(3) Indeed, the metric on $K_N$ comes from the Cayley graph structure.
\end{proof}

The above result was, at the time of \cite{bbc}, and since then, something quite surprizing. One of our purposes in what follows will be that of answering the following question: is the above result something exceptional, or is it part of some general theory?

\section{Representation theory}

In this section we discuss the representation theory of $\bar{O}_N$, and its connection with the representation theory of the hyperoctahedral group $H_N$, and with some other related quantum groups. We will heavily rely on the Tannakian techniques introduced by Woronowicz in \cite{wo2}, further explained in \cite{mal}, \cite{ntu}, and in the lecture notes \cite{ba4}.

The general idea is that since $\bar{O}_N$ appears as a kind of $q=-1$ twist of $O_N$, in a sense close to the one of Drinfeld \cite{dri} and Jimbo \cite{jim}, its representation theory should be very similar to that of $O_N$. As a first illustration for this principle, we have:

\begin{proposition}
The irreducible representations of $\bar{O}_N$ are in one-to-one correspondence with those of $O_N$, with the fusion rules being the same.
\end{proposition}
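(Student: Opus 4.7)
The plan is to apply Woronowicz's Tannakian duality \cite{wo2} to both $O_N$ and $\bar{O}_N$, and show that their representation categories are equivalent as abstract tensor $C^*$-categories. Since fusion rules are a categorical invariant, equivalence of tensor categories will immediately yield the bijection of irreducibles together with the identification of fusion rules.

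The first step is to recall the Brauer-type description of the Tannakian category of $O_N$: the spaces $\mathrm{Hom}(u^{\otimes k},u^{\otimes l})$ are linearly spanned by operators $T_\pi$ indexed by pair partitions $\pi\in P_2(k,l)$, with composition, tensor product and adjoint governed by the usual concatenation rules on partitions, loops yielding factors of $N$. The second step is to give an analogous description for $\bar{O}_N$. The defining relations of $\bar{O}_N$ from Proposition 1.3(2) are exactly what is needed to ensure that a signed version of the basic crossing pair is an intertwiner; unpacking the Tannakian dictionary, this leads to the fact that $\mathrm{Hom}_{\bar{O}_N}(u^{\otimes k},u^{\otimes l})$ is spanned by $\{\varepsilon(\pi)T_\pi:\pi\in P_2(k,l)\}$, where $\varepsilon:P_2\to\{\pm1\}$ is a signature counting, modulo 2, the crossings of the partition. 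This is precisely the Schur-Weyl twisting construction worked out in \cite{ba1}.

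Once this combinatorial model is in hand, the third step is to verify that the correspondence $T_\pi\longleftrightarrow\varepsilon(\pi)T_\pi$ defines an isomorphism of abstract tensor $C^*$-categories. Concretely, one must check multiplicativity of $\varepsilon$ under the categorical operations: $\varepsilon(\pi\otimes\sigma)=\varepsilon(\pi)\varepsilon(\sigma)$ is immediate from the definition in terms of crossings, while $\varepsilon(\pi\circ\sigma)=\varepsilon(\pi)\varepsilon(\sigma)$ (up to loop factors, which are unchanged by the signature since loops do not contribute crossings) is the genuine combinatorial content. Finally, Tannakian duality translates this abstract equivalence into a bijection between isomorphism classes of irreducible representations, which by functoriality preserves the decomposition of tensor products, i.e.\ the fusion rules.

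The main obstacle is step three: establishing that $\varepsilon$ is multiplicative under vertical composition of partitions. The difficulty is that when one concatenates $\pi$ on top of $\sigma$ and then resolves the middle strands, the parity of crossings in the resulting partition is not obviously the sum of parities of $\pi$ and $\sigma$; a careful case analysis, or a clever reformulation of $\varepsilon$ in terms of a combinatorial invariant that is manifestly multiplicative, is required. This is essentially the technical core of \cite{ba1}, so here one should simply invoke that reference; the rest of the argument is then a formal consequence of Tannakian duality.
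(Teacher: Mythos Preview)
Your overall strategy---compare the Tannakian categories of $O_N$ and $\bar{O}_N$ as abstract tensor $C^*$-categories and then invoke Woronowicz duality---is sound in principle, and indeed a version of it can be made to work using the Schur--Weyl twist from \cite{ba1}. However, the concrete description you give of the intertwiners of $\bar{O}_N$ is wrong, and this error is fatal for the argument as written.

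You claim that $\mathrm{Hom}_{\bar{O}_N}(u^{\otimes k},u^{\otimes l})$ is spanned by $\{\varepsilon(\pi)T_\pi:\pi\in P_2(k,l)\}$ for a sign $\varepsilon(\pi)=\pm1$ depending only on the partition $\pi$. But the span of $\{\varepsilon(\pi)T_\pi\}$ is literally equal to the span of $\{T_\pi\}$, since each generator is rescaled by a nonzero scalar. Your description would therefore force $\bar{O}_N=O_N$, which is false. The actual twisted intertwiners $\bar{T}_\pi$ (see Proposition~2.3) carry a sign $\varepsilon(\ker\binom{i}{j})$ that depends on the \emph{indices}, not on $\pi$ alone; equivalently, one has a M\"obius-type expansion $\bar{T}_\pi=\sum_{\sigma\leq\tau\leq\pi}\varepsilon(\tau)\mu(\sigma,\tau)T_\sigma$, so $\bar{T}_\pi$ is a genuine linear combination of several $T_\sigma$, not a scalar multiple of $T_\pi$. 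Consequently your ``step three'' (checking that $\varepsilon$ is multiplicative under composition) is not the right obstacle: with a global sign there is nothing to check, but also nothing is being twisted.

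For comparison, the paper's own proof does not go through the partition picture at all; it cites \cite{bbc}, where $\bar{O}_N$ is realized as a $2$-cocycle twist of $O_N$ and the representation categories are matched by a Morita-type equivalence. If you want to salvage the Tannakian route, you must use the correct maps $\bar{T}_\pi$ and verify (as in \cite{ba1}) that the assignment $\pi\mapsto\bar{T}_\pi$ obeys the same composition, tensor and involution rules as $\pi\mapsto T_\pi$; this yields an isomorphism of the abstract partition categories implementing both quantum groups, from which the equality of fusion rules follows.
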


\begin{proof}
This result is from \cite{bbc}, with the proof using a cocycle twisting interpretation of $\bar{O}_N$, along with a Morita equivalence type argument. See \cite{bbc}. 
\end{proof}

The above result is of course not very explicit, and in addition the fusion rules in question, meaning those for $O_N$, are something quite complicated. In practice, better for our purposes will be to use the ``easiness'' philosophy from \cite{bsp}. We first have:

\begin{proposition}
The Tannakian category of $O_N$ is given by
$$Hom(u^{\otimes k},u^{\otimes l})=span\left(T_\pi\Big|\pi\in P_2(k,l)\right)$$
where $P_2$ is the category of pairings, and where $\pi\to T_\pi$ is given by
$$T_\pi(e_{i_1}\otimes\ldots\otimes e_{i_k})=\sum_{j:\ker(^i_j)\leq\pi}e_{j_1}\otimes \ldots\otimes e_{j_l}$$
with $e_1,\ldots,e_N$ being the standard basis of $\mathbb C^N$.
\end{proposition}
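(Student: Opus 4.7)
The plan is to apply Woronowicz's Tannakian duality from \cite{wo2}, which identifies a compact matrix quantum group $G\subset O_N^+$ with its tower of intertwiner spaces between tensor powers of the fundamental representation. Setting $\mathcal C(k,l):=\mathrm{span}\{T_\pi:\pi\in P_2(k,l)\}$, I would first verify that these spaces form a concrete rigid monoidal $*$-category, with horizontal concatenation $\pi\sqcup\sigma$ corresponding to $T_\pi\otimes T_\sigma$, vertical composition $\pi\circ\sigma$ corresponding to $T_\pi T_\sigma$ up to a factor $N^c$ counting the closed loops created, and the upside-down flip of $\pi$ corresponding to the adjoint $T_\pi^*$. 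Woronowicz's theorem then produces a unique compact quantum group $G\subset O_N^+$ whose intertwiner category coincides with $\mathcal C$.

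The identification $G=O_N$ would then be carried out via two matching inclusions. For $O_N\subset G$, I would verify directly that every $T_\pi$ with $\pi\in P_2$ is an $O_N$-intertwiner; since $P_2$ is tensor-generated by the pair $\cup\in P_2(0,2)$ and the crossing $\chi\in P_2(2,2)$ (together with the identity line), this reduces to two elementary computations using $uu^t=u^tu=1$ and commutativity of the $u_{ij}$. Via Woronowicz, the inclusion $\mathcal C\subset\mathrm{Hom}_{O_N}$ translates to $O_N\subset G$. For the reverse $G\subset O_N$, I would expand the intertwiner equation $T_\chi u^{\otimes 2}=u^{\otimes 2}T_\chi$ in coordinates: it reads $u_{ij}u_{kl}=u_{kl}u_{ij}$ for all $i,j,k,l$, forcing $C(G)$ to be commutative. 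Combined with Proposition 1.3(1), this gives $G\subset O_N$.

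The main obstacle is the genuine content of the spanning claim, namely the non-trivial inclusion $\mathrm{Hom}_{O_N}(u^{\otimes k},u^{\otimes l})\subset\mathcal C(k,l)$, which is essentially Brauer's first fundamental theorem of invariant theory for $O_N$. The Tannakian route sidesteps a direct dimension count: once the two containments above give $G=O_N$, equality of the Hom spaces is automatic from the uniqueness part of Woronowicz's theorem, and yields the reverse inclusion for free. The remaining verifications, namely the compatibility of the partition-category operations $\sqcup,\circ,\,\cdot^{\mathrm{op}}$ with the corresponding operations on the $T_\pi$, are routine computations on basis vectors in $(\mathbb C^N)^{\otimes k}$, in the spirit of \cite{bsp}.
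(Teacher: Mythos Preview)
Your proposal is correct and matches exactly the approach the paper invokes: the paper's own proof is merely a pointer to Brauer \cite{bra} together with the Tannakian reproof from \cite{bsp} and \cite{mal}, and what you have sketched is precisely that reproof, namely verifying that $\pi\mapsto T_\pi$ is functorial, applying Woronowicz duality \cite{wo2} to obtain $G\subset O_N^+$, and then pinning down $G=O_N$ via the crossing relation forcing commutativity.
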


\begin{proof}
This is an old theorem of Brauer \cite{bra}. In what follows we will rely on the proof from \cite{bsp} of this result, with categorical input coming from \cite{mal}. See \cite{ba4}.
\end{proof}

Regarding now $\bar{O}_N$, this quantum group is not exactly easy in the sense of \cite{bsp}, but we have the following result, making the link with \cite{bsp}:

\begin{proposition}
The Tannakian category of $\bar{O}_N$ is given by
$$Hom(u^{\otimes k},u^{\otimes l})=span\left(\bar{T}_\pi\Big|\pi\in P_2(k,l)\right)$$
where $P_2$ is the category of pairings, and where $\pi\to\bar{T}_\pi$ is given by
$$\bar{T}_\pi(e_{i_1}\otimes\ldots\otimes e_{i_k})=\sum_{j:\ker(^i_j)\leq\pi}\varepsilon\left(\ker\begin{pmatrix}i_1&\ldots&i_k\\ j_1&\ldots&j_l\end{pmatrix}\right)e_{j_1}\otimes \ldots\otimes e_{j_l}$$
with $\varepsilon:P_{even}\to\{\pm1\}$ being the standard extension of the signature map $S_\infty\to\{\pm1\}$.
\end{proposition}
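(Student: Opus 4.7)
The plan is to adapt the strategy behind Proposition 2.2 (Brauer's theorem, in the easy formulation of \cite{bsp}) to the twisted setting, inserting signs wherever the defining anticommutation relations $\mathcal{R}$ of $\bar{O}_N$ force them. I would organize the proof around two inclusions between $C_{kl}:=\mathrm{span}(\bar{T}_\pi \mid \pi\in P_2(k,l))$ and $\mathrm{Hom}(u^{\otimes k},u^{\otimes l})$.

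For the inclusion $C_{kl}\subseteq\mathrm{Hom}(u^{\otimes k},u^{\otimes l})$, I would verify directly that each $\bar{T}_\pi$ intertwines $u^{\otimes k}$ with $u^{\otimes l}$. Expanding the identity $u^{\otimes l}\bar{T}_\pi=\bar{T}_\pi u^{\otimes k}$ on basis vectors and matching coefficients reduces the question to comparing products of $u_{ij}$'s. In the untwisted case these products coincide after free reordering; in the twisted case one must first bring each monomial into a standard order using $\mathcal{R}$, and this produces signs. The decisive combinatorial claim is that the cumulative sign picked up during this normalization equals exactly the signature $\varepsilon$ of the kernel partition. This is precisely what motivates the sign prefactor appearing in the definition of $\bar{T}_\pi$, and once established it reduces the intertwining identity to its untwisted counterpart from \cite{bsp}.

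For the reverse inclusion $\mathrm{Hom}(u^{\otimes k},u^{\otimes l})\subseteq C_{kl}$, I would appeal to the Woronowicz--Malacarne form of Tannakian duality \cite{wo2}, \cite{mal}. One first checks that the family $\{C_{kl}\}_{k,l}$ is closed under composition, tensor product and involution, hence defines a Tannakian category; this reduces, via multiplicativity of $\varepsilon$ across concatenation and loop removal, to the analogous categorical calculus for $\{T_\pi\}$ developed in \cite{bsp}. By duality this category corresponds to some intermediate compact quantum group $\bar{O}_N\subseteq G\subseteq O_N^+$. To force $G=\bar{O}_N$, it suffices to observe that the condition $\bar{T}_\pi\in\mathrm{Hom}(u^{\otimes 2},u^{\otimes 2})$ applied at the crossing pairing $\pi\in P_2(2,2)$ imposes precisely the row/column anticommutation relations $\mathcal{R}$ on the coordinates of $G$, while the cup and cap pairings impose orthogonality $u=\bar{u}$, $u^t=u^{-1}$.

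The main obstacle will be the sign bookkeeping in the first step: one must match the dynamically accumulated sign coming from repeated application of $\mathcal{R}$ with the intrinsic combinatorial signature $\varepsilon$ of the kernel partition. Once this identification is in place, the remainder of the argument runs along the standard easy-quantum-group lines, with all signs propagating consistently through the multiplicative signature $\varepsilon$ on $P_{even}$.
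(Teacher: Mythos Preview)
Your proposal is correct and follows essentially the same approach as the paper: adapt the Brauer/\cite{bsp} argument by tracking the signs, with the anticommutation relations $\mathcal{R}$ on the coordinates of $\bar{O}_N$ producing exactly the signature $\varepsilon$ in the definition of $\bar{T}_\pi$. The paper's proof is a two-line sketch deferring to \cite{bsp} and \cite{ba1}, while you spell out the two-inclusion Tannakian structure explicitly; the substance is the same.
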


\begin{proof}
The point here is that, after reproving the Brauer result as in \cite{bsp}, the extension to $\bar{O}_N$ is straightforward, with the $\pm$ signs in the commutation relations between the coordinates of $\bar{O}_N$ producing the $\pm$ signs from the signature, as stated. See \cite{ba1}.
\end{proof}

At a more advanced level now, it is known as well from \cite{ba1} that a Weingarten type integration formula for $\bar{O}_N$ is available, in the spirit of the one in \cite{csn} for $O_N$, with both the Gram and the Weingarten matrices being invariant under twisting. See \cite{ba1}.

In what follows we will be rather interested in purely algebraic consequences of Proposition 2.3. Following \cite{ba1}, \cite{ba2}, let us introduce the following notion:

\begin{definition}
An intermediate quantum group $H_N\subset\dot{G}\subset O_N^+$ is called ``quizzy'', with deformation parameter $q=\pm1$, when its Tannakian category appears as
$$Hom(u^{\otimes k},u^{\otimes l})=span\left(\dot{T}_\pi\Big|\pi\in D(k,l)\right)$$
for a certain category of partitions $D$, where the correspondence $\pi\to \dot{T}_\pi$ is the usual $\pi\to T_\pi$ correspondence at $q=1$, and is the correspondence $\pi\to\bar{T}_\pi$ at $q=-1$. 
\end{definition}

Here the fact that we must assume $H_N\subset\dot{G}$ comes from the fact that the signature map $\varepsilon$ is defined only on $P_{even}$, and not on the whole $P$. Thus, in order for the twisted maps $\bar{T}_\pi$ to be well-defined, we must have $D\subset P_{even}$, which reads $H_N\subset\dot{G}$. See \cite{ba2}.

In order to discuss now the classification of the quizzy quantum groups, we need to introduce more some examples, coming from \cite{ba1}, \cite{bsp}, as follows:

\begin{definition}
We have intermediate compact quantum groups as follows,
$$O_N\subset O_N^*\subset O_N^+\quad,\quad \bar{O}_N\subset\bar{O}_N^*\subset O_N^+$$
obtained by imposing to the standard coordinates of $O_N^+$ the half-commutation relations $abc=cba$, and the twisted half-commutation relations $abc=\pm cba$.
\end{definition}

To be more precise, the signs in above relations $abc=\pm cba$ are by definition those producing an embedding $\bar{O}_N\subset\bar{O}_N^*$. The precise formula of these signs, which is a bit complicated, can be found in \cite{ba1}. As an explanation here, however, let us mention that $O_N^*,\bar{O}_N^*$ appear respectively as the quantum isometry groups of the half-classical spheres $S^{N-1}_{\mathbb R,*},\bar{S}^{N-1}_{\mathbb R,*}$, whose coordinates are subject to the following relations:
$$x_ix_jx_k=x_kx_jx_i\quad,\quad x_ix_jx_k=(-1)^{\delta_{ij}+\delta_{ik}+\delta_{jk}}x_kx_jx_i$$

For more details on the structure of these quantum groups, on the half-liberation operation, and on these latter noncommutative spheres, see \cite{ba1}, \cite{bi3}, \cite{bdu}.

As examples, we have as well the hyperoctahedral group $H_N$, its free version $H_N^+$, constructed in \cite{bbc}, and the various intermediate liberations $H_N\subset H_N^\times\subset H_N^+$. Following \cite{rwe}, the definition and classification of these latter quantum groups is as follows:

\begin{proposition}
Consider the quantum group $H_N^+\subset O_N^+$ obtained via the relations stating that $p_{ij}=u_{ij}^2$ is magic, in the sense that $p_{ij}$ are projections, summing up to $1$ on each row and column. The easy quantum groups $H_N\subset H_N^\times\subset H_N^+$ are then:
\begin{enumerate}
\item $H_N$ and $H_N^+$ themselves, as well as $H_N^*=H_N^+\cap O_N^*$.

\item A higher half-liberation $H_N^*\subset H_N^{[\infty]}\subset H_N^+$, obtained by imposing the relations $abc=0$, for any $a\neq c$ on the same row or column of $u$.

\item An uncountable family of intermediate quantum groups $H_N\subset H_N^\Gamma\subset H_N^{[\infty]}$, obtained from the quotients $\mathbb Z_2^{*\infty}\to\Gamma$ satisfying a certain uniformity condition.

\item A series of intermediate quantum groups $H_N^{[\infty]}\subset H_N^{\diamond k}\subset H_N^+$, obtained via the relations $[a_1\ldots a_{k-2}b^2a_{k-2}\ldots a_1,c^2]=0$.
\end{enumerate}
\end{proposition}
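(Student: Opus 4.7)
The plan is to reduce the classification problem to one about categories of partitions, and then to invoke the classification theorem of Raum--Weber \cite{rwe}.

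First, I would recognize $H_N$ and $H_N^+$ as easy quantum groups in the sense of \cite{bsp}, with associated categories $P_{even}$ and $NC_{even}$ respectively, consisting of all (respectively noncrossing) partitions with blocks of even size. Combined with the Tannakian framework of Proposition 2.2, this yields a bijection between easy intermediate quantum groups $H_N\subset G\subset H_N^+$ and categories of partitions $D$ with $NC_{even}\subset D\subset P_{even}$. The fact that $H_N^+$ itself is a well-defined compact quantum group (the magic condition on $p_{ij}=u_{ij}^2$ being preserved by the putative $\Delta,\varepsilon,S$) then follows, as usual, from the Tannakian description.

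The heart of the argument is the classification of such intermediate categories, which is exactly the combinatorial result proved in \cite{rwe}. I would invoke it as a black box, and then match each class in the statement with a concrete category: $H_N^*=H_N^+\cap O_N^*$ corresponds to the category generated by $NC_{even}$ together with the single ``half-commutative'' crossing used to define $O_N^*$ in Definition 2.5; the higher half-liberation $H_N^{[\infty]}$ corresponds to a further specific enlargement, formalizing the ``$abc=0$'' relations of (2); the uncountable family $H_N^\Gamma$ corresponds to the categories arising, via a standard dictionary, from the quotients $\mathbb{Z}_2^{*\infty}\to\Gamma$ satisfying the uniformity condition alluded to in (3); and the discrete series $H_N^{\diamond k}$ corresponds to the categories generated by the partitions encoding the bracket relation in (4).

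Finally, for each listed quantum group I would verify, by direct computation using the formula for $T_\pi$ from Proposition 2.2 applied to the generating partitions of the corresponding category, that the stated relations on $u_{ij}$ agree with the Tannakian relations read off from that category. The main obstacle sits entirely in the Raum--Weber classification itself, which requires a delicate combinatorial analysis of how partitions in $P_{even}$ can be generated modulo the category operations; since the present statement is citational in nature, I would treat \cite{rwe} as a black box, leaving only the relatively organizational task of matching each category to its explicit system of relations on the coordinates.
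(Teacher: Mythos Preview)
Your proposal is correct and follows the same approach as the paper: both treat the result as essentially citational, deferring the construction of $H_N^+$ and the easiness of $H_N,H_N^+$ to \cite{bbc} and the full classification of intermediate categories $NC_{even}\subset D\subset P_{even}$ to \cite{rwe}. You have simply spelled out in more detail the Tannakian dictionary and the matching of categories to relations, whereas the paper's proof is a bare two-line citation.
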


\begin{proof}
Here the fact that $H_N^+$ is indeed a quantum group, and that both $H_N,H_N^+$ are easy, is from \cite{bbc}. As for the classification result, this is from \cite{rwe}.
\end{proof}

Let us mention as well that, in analogy with the decomposition $H_N=S_N\wr\mathbb Z_2$, we have a decomposition $H_N^+=S_N^+\wr_*\mathbb Z_2$, where $S_N^+$ is Wang's quantum permutation group \cite{wa2}, and where $\wr_*$ is a free wreath product in the sense of Bichon \cite{bi1}. See \cite{bbc}.

We have the following classification result, from \cite{ba2}:

\begin{theorem}
The quizzy quantum groups $H_N\subset\dot{G}\subset O_N^+$ are as follows,
$$\xymatrix@R=7mm@C=20mm{
&\bar{O}_N\ar[r]&\bar{O}_N^*\ar[rd]\\
H_N\ar@.[r]\ar[ur]\ar[rd]&H_N^\times\ar@.[r]&H_N^+\ar[r]&O_N^+\\
&O_N\ar[r]&O_N^*\ar[ru]}$$
with $H_N^\times$ standing for the various liberations of $H_N$, which are all self-dual.
\end{theorem}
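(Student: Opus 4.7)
The plan is to reduce the classification to the untwisted case, handled in \cite{rwe}, by unpacking the definition of ``quizzy'' in Definition 2.4 and analysing how the twisting procedure $T_\pi \mapsto \bar{T}_\pi$ acts on the category of partitions associated with each quantum group in the untwisted list.

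First, I would invoke Tannakian duality, in the form used throughout the paper: a quizzy intermediate quantum group $H_N \subset \dot{G} \subset O_N^+$ is completely determined by its category of partitions $D \subset P_{even}$ together with the choice of twisting parameter $q = \pm 1$. At $q = +1$ we recover the easy quantum groups of \cite{bsp}, and the intermediate ones with $H_N \subset G \subset O_N^+$ are classified in \cite{rwe}: the list consists of $H_N$, the family $H_N^\times$ of self-dual hyperoctahedral liberations from Proposition 2.7, the free version $H_N^+$, and then $O_N \subset O_N^* \subset O_N^+$ on the orthogonal side. This gives the bottom and middle rows of the diagram.

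Next, for each such $D$ appearing in this list, I would examine the $q = -1$ twist, producing an intermediate quantum group $H_N \subset \bar{G} \subset O_N^+$ whose Hom spaces are spanned by the twisted vectors $\bar{T}_\pi$ of Proposition 2.3. The key observation is that the resulting quantum group depends only on the category $D$: on the orthogonal side ($D = P_2$ or its ``half'' version $P_2^*$ generating $O_N^*$, or all of $NC_2$ for $O_N^+$), one recovers respectively $\bar{O}_N$, $\bar{O}_N^*$, and, for $O_N^+$, the quantum group itself since no nontrivial sign issue arises on noncrossing pair partitions. This gives the top row of the diagram and the identification $\overline{O_N^+} = O_N^+$.

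The main obstacle, and the place where I would spend most of the work, is showing that every $H_N^\times$ in Raum--Weber's list is \emph{self-dual}, i.e. that its twist coincides with itself. Concretely, this amounts to checking that for every category of partitions $D$ sitting between the ones defining $H_N$ and $H_N^+$, the span of $\{T_\pi : \pi \in D\}$ equals the span of $\{\bar{T}_\pi : \pi \in D\}$ inside each Hom space. The strategy is to exploit the block structure forced by the uniformity and ``$abc = 0$'' type relations in Proposition 2.7: on such categories each partition $\pi$ contains only blocks where the signature of the implementing permutation is controlled by a scalar that can be absorbed into a relabelling of basis vectors, so that $\bar{T}_\pi$ lies in the span of the $T_\sigma$ for $\sigma \in D$. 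Once this self-duality is verified, uniqueness of the classification is automatic, since by Definition 2.4 any quizzy intermediate quantum group arises from a category in the Raum--Weber list with one of the two twisting choices, and we have accounted for all of them.
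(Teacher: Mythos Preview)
Your proposal is correct and follows essentially the same route as the paper's proof: both start from the Raum--Weber classification of easy quantum groups $H_N\subset G\subset O_N^+$ at $q=+1$, then analyse the $q=-1$ twist case by case, observing that $O_N,O_N^*$ twist to $\bar{O}_N,\bar{O}_N^*$, that $O_N^+$ is self-dual since noncrossing pairings carry trivial signature, and that the hyperoctahedral liberations $H_N^\times$ are all self-dual. The paper is in fact even terser than you are on the self-duality of the $H_N^\times$, simply asserting $G=\bar{G}$ for all Raum--Weber quantum groups except $O_N,O_N^*$ and deferring the verification to \cite{ba2}; your sketch of the block-structure mechanism is a reasonable indication of how that verification goes.
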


\begin{proof}
We already know from Proposition 2.3 above that $\bar{O}_N$ is quizzy, appearing as a Schur-Weyl twist of $O_N$, in the sense that $D$ remains the same, and $q=\pm1$ changes. The same happens for $\bar{O}_N^*$ and $O_N^*$, as explained in \cite{ba2}. As for the converse, this follows from the classification work of Raum and Weber in \cite{rwe}, and from a case-by-case analysis of the twists. To be more precise, what happens is that for the quantum groups listed in \cite{rwe} we have $G=\bar{G}$, except for $G=O_N,O_N^*$. For details here, see \cite{ba2}.
\end{proof}

The above abstract considerations extend to the unitary case, but the situation here is considerably more complicated, with the classification of the easy quantum groups, and therefore of the quizzy quantum groups as well, not known yet. See \cite{gro}, \cite{twe}.

There are many interesting twisting questions too in connection with the various generalizations of the easy quantum group formalism, as those in \cite{fr1}, \cite{fr2}. 

\section{Fourier transforms}

Our purpose here is to compute the magic representation of $\bar{O}_N$, and its character. In order to solve this question, we will need:

\begin{proposition}
The Fourier transform over $\mathbb Z_2^N$ is the map
$$\alpha:C(\mathbb Z_2^N)\to C^*(\mathbb Z_2^N)\quad,\quad\delta_{g_1^{i_1}\ldots g_N^{i_N}}\to\frac{1}{2^N}\sum_{j_1\ldots j_N}(-1)^{<i,j>}g_1^{j_1}\ldots g_N^{j_N}$$
with the usual convention $<i,j>=\sum_ki_kj_k$, and its inverse is the map
$$\beta:C^*(\mathbb Z_2^N)\to C(\mathbb Z_2^N)\quad,\quad g_1^{i_1}\ldots g_N^{i_N}\to\sum_{j_1\ldots j_N}(-1)^{<i,j>}\delta_{g_1^{j_1}\ldots g_N^{j_N}}$$
with all the exponents being binary, $i_1,\ldots,i_N,j_1,\ldots,j_N\in\{0,1\}$.
\end{proposition}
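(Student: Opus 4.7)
The proposition is a concrete packaging of the standard Pontrjagin duality $\widehat{\mathbb{Z}_2^N} \cong \mathbb{Z}_2^N$, with the characters being $\chi_i(g^j) = (-1)^{\langle i,j\rangle}$, where I abbreviate $g^i = g_1^{i_1}\cdots g_N^{i_N}$. The plan is to verify directly that $\alpha$ and $\beta$ are mutual inverses on the canonical bases $\{\delta_{g^i}\}$ and $\{g^i\}$ (both indexed by $i \in \{0,1\}^N$), as everything else then follows by linearity.

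First, I would fix the single arithmetic fact that everything rests on, namely the orthogonality relation
$$\sum_{j \in \{0,1\}^N} (-1)^{\langle m,j\rangle} = 2^N \delta_{m,0},$$
valid for $m \in \{0,1\}^N$, where the right-hand side uses that $m=0$ in $\mathbb{Z}_2^N$ iff each $m_k$ is even. This is immediate from the rank-one factorization $\sum_j (-1)^{\langle m,j\rangle} = \prod_k \bigl(1 + (-1)^{m_k}\bigr)$.

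Next, I would compute $\beta\circ\alpha$ on a basis vector. By the two defining formulas,
$$\beta(\alpha(\delta_{g^i})) = \frac{1}{2^N}\sum_j (-1)^{\langle i,j\rangle} \beta(g^j) = \frac{1}{2^N}\sum_{j,k} (-1)^{\langle i,j\rangle + \langle j,k\rangle}\delta_{g^k} = \frac{1}{2^N}\sum_k \Bigl(\sum_j (-1)^{\langle i+k,j\rangle}\Bigr)\delta_{g^k},$$
and the orthogonality relation collapses the inner sum to $2^N\delta_{i,k}$, giving $\delta_{g^i}$. The computation for $\alpha\circ\beta$ is the mirror image, with the factor $1/2^N$ appearing on the other side. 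This shows $\alpha,\beta$ are mutual bijections on the respective linear bases, hence mutually inverse linear maps.

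There is essentially no obstacle here, only bookkeeping: one must be careful that the exponents $i_k, j_k, k_k$ are taken in $\{0,1\}$ as the statement insists, so that $g_k^{i_k+k_k}$ is interpreted modulo $2$ (i.e.\ as an element of $\mathbb{Z}_2^N$) when collecting the monomials in $g^j\cdot g^k = g^{j+k}$ implicitly used above. Once this convention is fixed, the whole proof reduces to the one orthogonality identity stated in the first step.
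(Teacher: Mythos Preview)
Your proof is correct and follows essentially the same route as the paper: both reduce the inversion $\alpha\beta=\beta\alpha=id$ to the single orthogonality identity $\frac{1}{2^N}\sum_j(-1)^{\langle m,j\rangle}=\delta_{m,0}$, proved by the same rank-one factorization $\prod_k(1+(-1)^{m_k})$. The only difference is that the paper additionally remarks that $\alpha,\beta$ are algebra morphisms, whereas you verify only that they are mutually inverse linear maps; but the proposition as stated asks precisely for the latter, so this is not a gap.
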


\begin{proof}
Observe first that the group $\mathbb Z_2^N$ can be written as follows:
$$\mathbb Z_2^N=\left\{g_1^{i_1}\ldots g_N^{i_N}\Big|i_1,\ldots,i_N\in\{0,1\}\right\}$$

Thus both $\alpha,\beta$ are well-defined, and it is elementary to check that both are morphisms of algebras. We have as well $\alpha\beta=\beta\alpha=id$, coming from the standard formula:
$$\frac{1}{2^N}\sum_{j_1\ldots j_N}(-1)^{<i,j>}=\prod_{k=1}^N\left(\frac{1}{2}\sum_{j_r}(-1)^{i_rj_r}\right)=\delta_{i0}$$

Thus we have indeed a pair of inverse Fourier morphisms, as claimed.
\end{proof}

As an illustration here, at $N=1$, with the notation $\mathbb Z_2=\{1,g\}$, the map $\alpha$ is given by $\delta_1\to\frac{1}{2}(1+g)$, $\delta_g\to\frac{1}{2}(1-g)$ and its inverse $\beta$ is given by $1\to\delta_1+\delta_g$, $g\to\delta_1-\delta_g$.

By using now these Fourier transforms, we obtain following formula:

\begin{proposition}
The magic unitary for the embedding $\bar{O}_N\subset S_{2^N}^+$ is given by
$$w_{i_1\ldots i_N,k_1\ldots k_N}=\frac{1}{2^N}\sum_{j_1\ldots j_N}\sum_{b_1\ldots b_N}(-1)^{<i+k_b,j>}\left(\frac{1}{N}\right)^{\#(0\in j)}u_{1b_1}^{j_1}\ldots u_{Nb_N}^{j_N}$$
where $k_b=(k_{b_1},\ldots,k_{b_N})$, with respect to multi-indices $i,k\in\{0,1\}^N$ as above.
\end{proposition}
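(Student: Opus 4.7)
The plan is to transport the action $\Phi$ of Theorem 1.7 across the Fourier isomorphisms $\alpha,\beta$ from Proposition 3.1. Setting $\Phi'=(\mathrm{id}\otimes\beta)\circ\Phi\circ\alpha$ produces a coaction $\Phi':C(\mathbb{Z}_2^N)\to C(\bar{O}_N)\otimes C(\mathbb{Z}_2^N)$, and the magic unitary $w$ for the embedding $\bar{O}_N\subset S_{2^N}^+$ is then directly read off from
$$\Phi'(\delta_{g^i})=\sum_k w_{ik}\otimes\delta_{g^k}.$$
Thus the task reduces to computing $\Phi\circ\alpha$, applying $\mathrm{id}\otimes\beta$, and collecting coefficients.

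The first step is to evaluate $\Phi$ on the group elements $g^j=g_1^{j_1}\ldots g_N^{j_N}$ with $j\in\{0,1\}^N$. Since $\Phi$ is an algebra morphism, $\Phi(g^j)=\Phi(g_1)^{j_1}\ldots\Phi(g_N)^{j_N}$, where for $j_r=1$ the $r$-th factor is $\sum_{b_r}u_{rb_r}\otimes g_{b_r}$ and for $j_r=0$ it is $1\otimes 1$. To put all slots on the same footing I would rewrite the trivial factor as $\frac{1}{N}\sum_{b_r}u_{rb_r}^{0}\otimes g_{b_r}^{0}$, which already produces the prefactor $(1/N)^{\#(0\in j)}$ appearing in the statement, and yields the uniform expression
$$\Phi(g^j)=\left(\frac{1}{N}\right)^{\#(0\in j)}\sum_{b_1\ldots b_N}u_{1b_1}^{j_1}\ldots u_{Nb_N}^{j_N}\otimes g_{b_1}^{j_1}\ldots g_{b_N}^{j_N}.$$

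The second step is to apply $\beta$ to the second tensor slot. Using the commutativity of the $g_s$ in $\mathbb{Z}_2^N$, one has $g_{b_1}^{j_1}\ldots g_{b_N}^{j_N}=g^m$ with $m_s\equiv\sum_{r:\,b_r=s}j_r\pmod{2}$, and a one-line check then yields $<m,l>\equiv<j,l_b>\pmod{2}$ for every $l\in\{0,1\}^N$, where $l_b=(l_{b_1},\ldots,l_{b_N})$. Substituting $\beta(g^m)=\sum_l(-1)^{<j,l_b>}\delta_{g^l}$, expanding $\alpha(\delta_{g^i})=\frac{1}{2^N}\sum_j(-1)^{<i,j>}g^j$ on the input side, and collecting the coefficient of $\delta_{g^k}$, the claimed formula for $w_{ik}$ drops out after using the identity $<i,j>+<j,k_b>=<i+k_b,j>$ in the exponent of $(-1)$.

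The computation as a whole is an organized expansion; the main obstacle is simply the bookkeeping of passing from the word $g_{b_1}^{j_1}\ldots g_{b_N}^{j_N}$ back to a pairing of the form $<j,l_b>$, together with the parallel care required for the noncommuting product $u_{1b_1}^{j_1}\ldots u_{Nb_N}^{j_N}$ on the $\bar{O}_N$ side, which must be kept in its natural order on $r$ throughout. The introduction of dummy $b_r$-summations for the slots with $j_r=0$, paid for by the $(1/N)^{\#(0\in j)}$ factor, is precisely what makes the final formula uniform in $j$.
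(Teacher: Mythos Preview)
Your proposal is correct and follows essentially the same route as the paper: transport $\Phi$ through the Fourier isomorphisms to obtain $\Psi=(\mathrm{id}\otimes\beta)\circ\Phi\circ\alpha$, expand $\Phi(g^j)$ uniformly via dummy $b_r$-sums at the cost of the factor $(1/N)^{\#(0\in j)}$, rewrite $g_{b_1}^{j_1}\cdots g_{b_N}^{j_N}$ in standard form, and use the pairing identity $\langle m,l\rangle\equiv\langle j,l_b\rangle$ to collect the coefficient of $\delta_{g^k}$. The only discrepancy is the reference to ``Theorem 1.7'' where the paper's coaction is Theorem 1.6; the argument itself matches the paper's proof step for step.
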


\begin{proof}
By composing the coaction map $\Phi$ from Theorem 1.6 with the above Fourier transform isomorphisms $\alpha,\beta$, we have a diagram as follows:
$$\xymatrix@R=20mm@C=25mm{
C^*(\mathbb Z_2^N)\ar[r]^\Phi&C(\bar{O}_N)\otimes C^*(\mathbb Z_2^N)\ar[d]^{id\otimes\beta}\\
C(\mathbb Z_2^N)\ar[u]^\alpha\ar@.[r]^\Psi&C(\bar{O}_N)\otimes C(\mathbb Z_2^N)}$$

In order to compute the composition on the bottom $\Psi$, we first recall from Theorem 1.6 above that the coaction map $\Phi$ is defined by the formula $\Phi(g_a)=\sum_bu_{ab}\otimes g_b$, for any $a\in\{1,\ldots,N\}$. Now by making products of such quantities, we obtain the following global formula for $\Phi$, valid for any exponents $i_1,\ldots,i_N\in\{1,\ldots,N\}$:
$$\Phi(g_1^{i_1}\ldots g_N^{i_N})=\left(\frac{1}{N}\right)^{\#(0\in i)}\sum_{b_1\ldots b_N}u_{1b_1}^{i_1}\ldots u_{Nb_N}^{i_N}\otimes g_{b_1}^{i_1}\ldots g_{b_N}^{i_N}$$

The term on the right can be put in ``standard form'' as follows:
$$g_{b_1}^{i_1}\ldots g_{b_N}^{i_N}=g_1^{\sum_{b_x=1}i_x}\ldots g_N^{\sum_{b_x}i_x}$$

We therefore obtain the following formula for the coaction map $\Phi$:
$$\Phi(g_1^{i_1}\ldots g_N^{i_N})=\left(\frac{1}{N}\right)^{\#(0\in i)}
\sum_{b_1\ldots b_N}u_{1b_1}^{i_1}\ldots u_{Nb_N}^{i_N}\otimes g_1^{\sum_{b_x=1}i_x}\ldots g_N^{\sum_{b_x=N}i_x}$$

Now by applying the Fourier transforms, we obtain the following formula:
\begin{eqnarray*}
&&\Psi(\delta_{g_1^{i_1}\ldots g_N^{i_N}})\\
&=&(id\otimes\beta)\Phi\left(\frac{1}{2^N}\sum_{j_1\ldots j_N}(-1)^{<i,j>}g_1^{j_1}\ldots g_N^{j_N}\right)\\
&=&\frac{1}{2^N}\sum_{j_1\ldots j_N}\sum_{b_1\ldots b_N}(-1)^{<i,j>}
\left(\frac{1}{N}\right)^{\#(0\in j)}
u_{1b_1}^{j_1}\ldots u_{Nb_N}^{j_N}\otimes\beta\left( g_1^{\sum_{b_x=1}j_x}\ldots g_N^{\sum_{b_x=N}j_x}\right)
\end{eqnarray*}

By using now the formula of $\beta$ from Proposition 3.1, we obtain:
\begin{eqnarray*}
\Psi(\delta_{g_1^{i_1}\ldots g_N^{i_N}})
&=&\frac{1}{2^N}\sum_{j_1\ldots j_N}\sum_{b_1\ldots b_N}\sum_{k_1\ldots k_N}\left(\frac{1}{N}\right)^{\#(0\in j)}\\
&&(-1)^{<i,j>}(-1)^{<(\sum_{b_x=1}j_x,\ldots,\sum_{b_x=N}j_x),(k_1,\ldots,k_N)>}\\
&&u_{1b_1}^{j_1}\ldots u_{Nb_N}^{j_N}\otimes\delta_{g_1^{k_1}\ldots g_N^{k_N}}
\end{eqnarray*}

Now observe that, with the notation $k_b=(k_{b_1},\ldots,k_{b_N})$, we have:
$$<(\sum_{b_x=1}j_x,\ldots,\sum_{b_x=N}j_x),(k_1,\ldots,k_N)>=<j,k_b>$$

Thus, we obtain the following formula for our map $\Psi$:
$$\Psi(\delta_{g_1^{i_1}\ldots g_N^{i_N}})=\frac{1}{2^N}\sum_{j_1\ldots j_N}\sum_{b_1\ldots b_N}\sum_{k_1\ldots k_N}(-1)^{<i+k_b,j>}\left(\frac{1}{N}\right)^{\#(0\in j)}u_{1b_1}^{j_1}\ldots u_{Nb_N}^{j_N}\otimes\delta_{g_1^{k_1}\ldots g_N^{k_N}}$$

But this gives the formula in the statement for the corresponding magic unitary, with respect to the basis $\{\delta_{g_1^{i_1}\ldots g_N^{i_N}}\}$ of the algebra $C(\mathbb Z_2^N)$, and we are done.
\end{proof}

Let us compute now the character of $w$. We first have:

\begin{proposition}
The character of the magic representation of $\bar{O}_N$ is given by
$$\chi=\sum_{j_1\ldots j_N}\sum_{b_1\ldots b_N}\left(\frac{1}{N}\right)^{\#(0\in j)}\delta_{j_1,\sum_{b_x=1}j_x}\ldots\delta_{j_N,\sum_{b_x=N}j_x}u_{1b_1}^{j_1}\ldots u_{Nb_N}^{j_N}$$
with binary indices $j_1,\ldots,j_N\in\{0,1\}$, and plain indices $b_1,\ldots,b_N\in\{1,\ldots,N\}$.
\end{proposition}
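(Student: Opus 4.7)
The plan is to compute the character $\chi=\sum_{i_1\ldots i_N}w_{i_1\ldots i_N,\,i_1\ldots i_N}$ directly, by substituting the explicit formula for the magic unitary from Proposition 3.2 and collapsing the outer sum over the diagonal indices.

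First, I would specialize the formula from Proposition 3.2 to the diagonal case $k=i$ and sum, giving
$$\chi=\frac{1}{2^N}\sum_{i_1\ldots i_N}\sum_{j_1\ldots j_N}\sum_{b_1\ldots b_N}(-1)^{\langle i+i_b,\,j\rangle}\left(\frac{1}{N}\right)^{\#(0\in j)}u_{1b_1}^{j_1}\cdots u_{Nb_N}^{j_N},$$
where $i_b=(i_{b_1},\ldots,i_{b_N})$. Since only the sign depends on the $i$ indices, the next task is to evaluate $\sum_i(-1)^{\langle i+i_b,\,j\rangle}$.

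The key rewriting is to group the cross term $\sum_x i_{b_x}j_x$ according to the value $y=b_x$, which produces
$$\langle i+i_b,\,j\rangle=\sum_y i_y j_y+\sum_y i_y\,S_y=\sum_y i_y(j_y+S_y),\qquad S_y:=\sum_{b_x=y}j_x.$$
The exponent is now diagonal in the $i_y$, so the sum factors into one-variable $\mathbb Z_2$-characters:
$$\sum_{i_1\ldots i_N\in\{0,1\}}(-1)^{\sum_y i_y(j_y+S_y)}=\prod_y\bigl(1+(-1)^{j_y+S_y}\bigr)=2^N\prod_y\delta_{j_y,\,S_y},$$
where the Kronecker deltas in the final product are to be read modulo $2$, which is the natural interpretation here given that the $j_y$ are binary while the sums $S_y$ need not be.

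Finally, the factor $2^N$ cancels the prefactor $1/2^N$, leaving exactly the formula stated in the proposition. I do not expect any serious obstacle beyond the reindexing $\sum_x i_{b_x}j_x=\sum_y i_y S_y$ in the cross term; everything else reduces to the standard Fourier orthogonality on $\mathbb Z_2^N$ already used in Proposition 3.1, applied one coordinate at a time.
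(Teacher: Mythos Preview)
Your proof is correct and follows essentially the same route as the paper: specialize Proposition 3.2 to the diagonal, isolate the sum over $i$, reindex the cross term $\sum_x i_{b_x}j_x$ as $\sum_y i_y\sum_{b_x=y}j_x$, and evaluate the resulting factored $\mathbb Z_2$-character sum as $2^N$ times a product of mod-$2$ Kronecker deltas. The only cosmetic difference is that you name the inner sums $S_y$, whereas the paper keeps the expressions $\sum_{b_x=y}j_x$ explicit throughout.
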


\begin{proof}
With the formula in Proposition 3.2, the character is:
\begin{eqnarray*}
\chi
&=&\sum_{i_1\ldots i_N}w_{i_1\ldots i_N,i_1\ldots i_N}\\
&=&\frac{1}{2^N}\sum_{i_1\ldots i_N}\sum_{j_1\ldots j_N}\sum_{b_1\ldots b_N}(-1)^{<i+i_b,j>}\left(\frac{1}{N}\right)^{\#(0\in j)}u_{1b_1}^{j_1}\ldots u_{Nb_N}^{j_N}\\
&=&\sum_{j_1\ldots j_N}\sum_{b_1\ldots b_N}\left(\frac{1}{2^N}\sum_{i_1\ldots i_N}(-1)^{<i+i_b,j>}\right)\left(\frac{1}{N}\right)^{\#(0\in j)}u_{1b_1}^{j_1}\ldots u_{Nb_N}^{j_N}
\end{eqnarray*}

The sum in the middle $S_{ijb}$ is a Fourier sum, computed as follows:
\begin{eqnarray*}
S_{ijb}
&=&\frac{1}{2^N}\sum_{i_1\ldots i_N}(-1)^{<i,j>}(-1)^{i_{b_1}j_1+\ldots+i_{b_N}j_N}\\
&=&\frac{1}{2^N}\sum_{i_1\ldots i_N}(-1)^{<i,j>}(-1)^{i_1\sum_{b_x=1}j_x+\ldots+i_N\sum_{b_x=N}j_x}\\
&=&\frac{1}{2^N}\sum_{i_1\ldots i_N}(-1)^{i_1(j_1+\sum_{b_x=1}j_x)+\ldots+i_N(j_N+\sum_{b_x=N}j_x)}\\
&=&\delta_{j_1,\sum_{b_x=1}j_x}\ldots\ldots\delta_{j_N,\sum_{b_x=N}j_x}
\end{eqnarray*}

We therefore obtain the formula in the statement, and we are done.
\end{proof} 

We can fine-tune the formula found above, as follows:

\begin{proposition}
The magic character of $\bar{O}_N$ is given by $\chi=\sum_{r=0}^N\chi_r$, where
$$\chi_r=\frac{1}{N^{N-r}}\sum_{|A|=r}\sum_{b<A}\prod_{a\in A}u_{ab_a}$$
with the first sum being by definition over sets $A\subset\{1,\ldots,N\}$ satisfying $|A|=r$, the second sum being over functions $b:\{1,\ldots,N\}\to\{1,\ldots,N\}$ satisfying the condition
$$b<A\quad:\quad|b^{-1}(p)\cap A|=\chi_A(p)\ ({\rm mod}\ 2),\forall p$$
and with the product being ordered, and written with the convention $b_a=b(a)$.
\end{proposition}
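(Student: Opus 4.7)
The plan is to reorganize the sum from Proposition~3.3 by grouping its terms according to the support set $A=\{a\in\{1,\ldots,N\}:j_a=1\}$ of the binary tuple $(j_1,\ldots,j_N)$, and then by its cardinality $r=|A|$.

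First I would replace the outer sum $\sum_{j_1\ldots j_N}$ over $\{0,1\}^N$ by the sum $\sum_{A\subset\{1,\ldots,N\}}$ with $j_a=\chi_A(a)$. Under this dictionary $\#(0\in j)=N-|A|$, so the prefactor $(1/N)^{\#(0\in j)}$ becomes $(1/N)^{N-|A|}$, which depends only on $r=|A|$. Since $u_{ab_a}^0=1$ and $u_{ab_a}^1=u_{ab_a}$, the ordered product $u_{1b_1}^{j_1}\ldots u_{Nb_N}^{j_N}$ collapses to $\prod_{a\in A}u_{ab_a}$, with the surviving factors inheriting the natural increasing order on $A$; this is the meaning of the parenthetical ``the product being ordered'' in the statement, and it matters because $u_{ab_a}$ and $u_{a'b_{a'}}$ with $a\neq a'$ but $b_a=b_{a'}$ lie in the same column and only anticommute.

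Next I would translate the Kronecker delta constraints into the relation $b<A$. For fixed $A$, the constraint $j_p\equiv\sum_{b_x=p}j_x\pmod 2$ has left-hand side $\chi_A(p)$, while its right-hand side is $\sum_{x:\,b_x=p}\chi_A(x)=|b^{-1}(p)\cap A|$, because $\chi_A(x)=0$ whenever $x\notin A$. The full system of deltas therefore reads
$$|b^{-1}(p)\cap A|\equiv\chi_A(p)\pmod 2\quad\text{for every }p\in\{1,\ldots,N\},$$
which is precisely the defining condition $b<A$ of Proposition~3.4.

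Combining these reductions and then grouping by $r=|A|$ gives the announced formula $\chi=\sum_{r=0}^N\chi_r$. There is no genuine obstacle beyond careful bookkeeping; the one point that requires attention is that the condition $b<A$ involves only the restriction $b|_A$, so the values $b_a$ for $a\notin A$ remain free in both propositions---consistent with the fact that in both statements $b$ ranges over all functions $\{1,\ldots,N\}\to\{1,\ldots,N\}$, so that no extra counting factor is needed when passing from one formula to the other.
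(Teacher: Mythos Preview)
Your proof is correct and follows essentially the same route as the paper: introduce $A=\{a:j_a=1\}$, note that $\#(0\in j)=N-|A|$ and that the product collapses to $\prod_{a\in A}u_{ab_a}$, and rewrite each Kronecker condition $j_p\equiv\sum_{b_x=p}j_x\pmod 2$ as $|b^{-1}(p)\cap A|\equiv\chi_A(p)\pmod 2$. Your final remark that $b<A$ depends only on $b|_A$ is correct but not needed here; the paper saves this observation for the next step (Theorem~3.5), where it produces the $N^{N-r}$ multiplicity.
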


\begin{proof}
We use the formula in Proposition 3.3. With the notation $r=\#(1\in j)$ we obtain a decomposition $\chi=\sum_{r=0}^N\chi_r$ as in the statement, with:
$$\chi_r=\frac{1}{N^{N-r}}\sum_{\#(1\in j)=r}\sum_{b_1\ldots b_N}\delta_{j_1,\sum_{b_x=1}j_x}\ldots\delta_{j_N,\sum_{b_x=N}j_x}u_{1b_1}^{j_1}\ldots u_{Nb_N}^{j_N}$$

Consider now the set $A\subset\{1,\ldots,N\}$ given by $A=\{a|j_a=1\}$. The binary multi-indices $j\in\{0,1\}^N$ satisfying $\#(1\in j)=r$ being in bijection with such subsets $A$, satisfying $|A|=r$, we can replace the sum over $j$ with a sum over such subsets $A$.

We therefore obtain a formula as follows, where $j$ is the index corresponding to $A$:
$$\chi_r=\frac{1}{N^{N-r}}\sum_{|A|=r}\sum_{b_1\ldots b_N}\delta_{j_1,\sum_{b_x=1}j_x}\ldots\delta_{j_N,\sum_{b_x=N}j_x}\prod_{a\in A}u_{ab_a}$$

We must understand now which multi-indices $b\in\{1,\ldots,N\}^N$ really contribute to the sum, in the sense that all the associated Kronecker symbols in the middle are 1. For this purpose, let us identify $b$ with the corresponding function $b:\{1,\ldots,N\}\to\{1,\ldots,N\}$, via $b(a)=b_a$, as in the statement. Then for any $p\in\{1,\ldots,N\}$ we have:
\begin{eqnarray*}
\delta_{j_p,\sum_{b_x=p}j_x}=1
&\iff&\sum_{b_x=p}j_x=j_p\ ({\rm mod}\ 2)\\
&\iff&\sum_{x\in b^{-1}(p)}j_x=j_p\ ({\rm mod}\ 2)\\
&\iff&|b^{-1}(p)\cap A|=\chi_A(p)\ ({\rm mod}\ 2)
\end{eqnarray*}

We conclude that the multi-indices $b\in\{1,\ldots,N\}^N$ which effectively contribute to the sum are those coming from the functions $b:\{1,\ldots,N\}\to\{1,\ldots,N\}$ satisfying the condition $b<A$ from the statement. Thus, we obtain the formula in the statement.
\end{proof}

The above formula for the character is still not our final one. We can indeed further study the condition $b<A$ appearing there, and we are led to:

\begin{theorem}
The magic character of $\bar{O}_N$ is given by $\chi=\sum_{r=0}^N\chi_r$, where
$$\chi_r=\sum_{|A|=r}\sum_{\sigma\in S_N^A}\prod_{a\in A}u_{a\sigma(a)}$$
with the product being ordered, and where $S_N^A=\{\sigma\in S_N|\sigma_{|A^c}=id\}$.
\end{theorem}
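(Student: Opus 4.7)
The plan is to start from Proposition 3.4 and rewrite the inner sum into the claimed form. First I observe that the ordered product $\prod_{a\in A}u_{ab_a}$ depends only on the restriction $\phi:=b|_A : A \to \{1,\ldots,N\}$; moreover the constraint $b<A$ reads $|\phi^{-1}(p)| \equiv \chi_A(p)\ ({\rm mod}\ 2)$ for every $p$, so it too depends only on $\phi$. Hence for each admissible $\phi$ the values $b|_{A^c}$ are completely free, contributing exactly $N^{N-r}$ extensions, and these cancel the prefactor $1/N^{N-r}$. This reduces the problem to proving
$$\chi_r = \sum_{|A|=r}\sum_{\phi}\prod_{a\in A}u_{a\phi(a)},$$
where $\phi$ ranges over maps $A \to \{1,\ldots,N\}$ with $|\phi^{-1}(p)|$ odd for $p \in A$ and even for $p \notin A$.

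The core step is a short combinatorial argument showing that these parity constraints force $\phi$ to be a bijection from $A$ onto $A$. Indeed, since $|\phi^{-1}(p)|\geq 1$ for every $p \in A$ and $\sum_p |\phi^{-1}(p)|=|A|=r$, I get $\sum_{p \notin A}|\phi^{-1}(p)|\leq 0$, so $\phi(A)\subseteq A$; and the $r$ odd positive integers $|\phi^{-1}(p)|$ with $p \in A$, summing to $r$, must each equal $1$. Conversely every bijection $\phi: A \to A$ trivially meets the parity condition, having $|\phi^{-1}(p)|=1$ for $p\in A$ and $|\phi^{-1}(p)|=0$ for $p\notin A$.

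Finally, bijections $\phi:A\to A$ are in canonical one-to-one correspondence with $S_N^A$ via $\phi\mapsto\sigma$, where $\sigma|_A=\phi$ and $\sigma|_{A^c}={\rm id}$; under this correspondence the ordered products $\prod_{a\in A}u_{a\phi(a)}$ and $\prod_{a\in A}u_{a\sigma(a)}$ coincide, yielding the stated formula. The main obstacle here is simply the careful combinatorial bookkeeping needed to decode the constraint $b<A$; no new algebraic input is required beyond what has already been extracted in Propositions 3.3 and 3.4, and in particular the anti-commutation relations of $\bar{O}_N$ only enter implicitly through the ordering of the product.
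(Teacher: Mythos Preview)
Your proof is correct and follows essentially the same route as the paper: both start from Proposition 3.4, observe that only $b|_A$ matters so that the $N^{N-r}$ free extensions cancel the prefactor, and then use the parity-plus-counting argument (odd preimage sizes are $\geq1$, they sum to $r$, hence each equals $1$) to conclude that the restriction is a bijection of $A$, which is then identified with an element of $S_N^A$. The only cosmetic difference is that the paper first normalizes $b|_{A^c}=\mathrm{id}$ and then reduces to the special case $A=\{1,\ldots,N\}$ before running the counting argument, whereas you carry out the same counting directly for arbitrary $A$.
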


\begin{proof}
We use the formula in Proposition 3.4. By splitting the character $\chi=\sum_{r=0}^N\chi_r$ as indicated there, and then by further splitting each $\chi_r$ over the sets $A\subset\{1,\ldots,N\}$ satisfying $|A|=r$, we must prove that for each of these sets we have:
$$\frac{1}{N^{N-r}}\sum_{b<A}\prod_{a\in A}u_{ab_a}=\sum_{\sigma\in S_N^A}\prod_{a\in A}u_{a\sigma(a)}$$

In order to do so, we must construct a certain correspondence $b\to\sigma$, which leaves invariant the product term, and which produces the multiplicity $N^{N-r}$.

We know that the condition $b<A$ corresponds to the following condition:
$$|b^{-1}(p)\cap A|=\chi_A(p)\ ({\rm mod}\ 2),\forall p$$

Now observe that the validity of this condition, and the value of the corresponding product $\prod_{a\in A}u_{ab_a}$ as well, only concerns the restriction $b_{|A}$. Thus, up to a multiplicity of $N^{|A^c|}=N^{N-r}$, we can replace if we want the restriction $b_{|A^c}$ by the identity.

Summarizing, we must prove that we have the following formula:
$$\sum_{b<A,b_{|A^c}=id}\prod_{a\in A}u_{ab_a}=\sum_{\sigma\in S_N^A}\prod_{a\in A}u_{a\sigma(a)}$$

Our claim is that this formula holds indeed, with the correspondence being given by $b=\sigma$. In order to prove this latter fact, what we have to show is that we have:
$$b_{|A^c}=id,|b^{-1}(p)\cap A|=\chi_A(p)\ ({\rm mod}\ 2),\forall p\iff b\in S_N^A$$

Since everything here depends on $A$ only, we can assume if we want that we have $A=\{1,\ldots,N\}$, and the statement to be proved becomes:
$$|b^{-1}(p)|=1\ ({\rm mod}\ 2),\forall p\iff b\in S_N$$

But this is clear, because the implication $\implies$ follows from $|b^{-1}(p)|\geq1$ for any $p$, and the implication $\Longleftarrow$ is trivial. Thus, we have proved our claim, and we are done.
\end{proof}

\section{Magic actions}

We will be interested in what follows in further understanding the magic action of $\bar{O}_N$, and notably in computing the probabilistic distribution of its character, with respect to the Haar measure of $\bar{O}_N$. For this purpose, simplest is to make the link with $O_N$.

In order to do so, we must further study the quantities $\chi_r$ introduced in Theorem 3.5 above. As a first result here, at small or big values of $r$, we have:

\begin{proposition}
The quantities $\chi_r$ from Theorem 3.5 are as follows:
\begin{enumerate}
\item $\chi_0=1$.

\item $\chi_1=\sum_au_{aa}$.

\item $\chi_2=\sum_{a<c}u_{aa}u_{cc}+u_{ac}u_{ca}$.

\item $\chi_{N-1}=\sum_{a=1}^N\sum_{\sigma\in S_N,\sigma(a)=a}u_{1\sigma(1)}\ldots u_{a-1\sigma(a-1)}u_{a+1\sigma(a+1)}\ldots u_{N\sigma(N)}$.

\item $\chi_N=\sum_{\sigma\in S_N}u_{1\sigma(1)}\ldots u_{N\sigma(N)}$.
\end{enumerate}
Also, at $N=2$ we obtain $\chi=1+u_{11}+u_{22}+u_{11}u_{22}+u_{12}u_{21}$.
\end{proposition}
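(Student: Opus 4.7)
The proof strategy is pure specialization of the master formula of Theorem 3.5,
$$\chi_r=\sum_{|A|=r}\sum_{\sigma\in S_N^A}\prod_{a\in A}u_{a\sigma(a)},$$
to the extremal ranges of $r$. The essential observation is that the condition $\sigma_{|A^c}=\mathrm{id}$ forces $\sigma$ to be determined by a permutation of $A$, so for small $|A|$ or small $|A^c|$ the inner sum collapses to something completely explicit.

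For the small-$r$ cases, I would argue as follows. When $r=0$, the only choice is $A=\emptyset$ with $\sigma=\mathrm{id}$, and the empty product equals $1$, yielding (1). When $r=1$, each singleton $A=\{a\}$ admits only $\sigma=\mathrm{id}$, so the product reduces to $u_{aa}$, giving (2). When $r=2$, a pair $A=\{a,c\}$ with $a<c$ supports exactly two permutations: the identity, contributing $u_{aa}u_{cc}$, and the transposition, contributing $u_{ac}u_{ca}$ under the ordered-product convention; summing over $A$ produces (3).

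For the large-$r$ cases, I would reason dually. When $r=N-1$, the complement $A^c=\{a\}$ is a singleton, so $S_N^A$ identifies with the stabilizer of $a$ in $S_N$; the ordered product over $A=\{1,\ldots,N\}\setminus\{a\}$ then takes precisely the form in (4), after relabeling the sum using the usual $1,\ldots,N$ indexing with $a$ skipped. When $r=N$, we have $A^c=\emptyset$ and $S_N^A=S_N$, which immediately yields (5).

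The final $N=2$ computation is then just the sum $\chi=\chi_0+\chi_1+\chi_2$ obtained by concatenating the three formulas already established. No real obstacle is expected, since the whole statement amounts to unpacking Theorem 3.5; the only bookkeeping issue is making sure the ordered-product convention is compatible with writing the transposition summands as $u_{ac}u_{ca}$ rather than $u_{ca}u_{ac}$ in case (3), which follows from the canonical ordering $a<c$ inherited from $A\subset\{1,\ldots,N\}$.
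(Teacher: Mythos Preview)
Your proof is correct and follows essentially the same approach as the paper: both specialize the formula of Theorem 3.5 case by case, identifying $A$ and $S_N^A$ explicitly for $r=0,1,2,N-1,N$, and then assemble the $N=2$ formula from $\chi_0+\chi_1+\chi_2$. Your remark on the ordered-product convention in case (3) is a harmless elaboration that the paper leaves implicit.
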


\begin{proof}
We use the formula found in Theorem 3.5 above, namely:
$$\chi_r=\sum_{|A|=r}\sum_{\sigma\in S_N^A}\prod_{a\in A}u_{a\sigma(a)}$$

(1) Here we must have $A=\emptyset$, and the result is clear.

(2) Here we can write $A=\{a\}$, the only permutation $\sigma\in S_N^A$ is the identity, and we obtain the formula in the statement.

(3) Here we can write $A=\{a,c\}$ with $a<c$, there are two permutations $\sigma\in S_N^A$, namely the identity and the transposition $a\leftrightarrow c$, and we obtain the above formula.

(4) Here we can write $A=\{1,\ldots,N\}-\{a\}$, and we obtain the above formula.

(5) Here we must have $A=\{1,\ldots,N\}$, and the result is clear.

At $N=2$ now, the various formulae that we have give $\chi_0=1$, $\chi_1=u_{11}+u_{22}$, $\chi_2=u_{11}u_{22}+u_{12}u_{21}$, and so $\chi=1+u_{11}+u_{22}+u_{11}u_{22}+u_{12}u_{21}$, as claimed.
\end{proof}

Observe that at $N=2$ the variable $\chi_2=\chi-\chi_0-\chi_1$, and so all the variables $\chi_r$ in this case, is a virtual character in the sense of \cite{wo1}. This can be checked as well directly, by applying the comultiplication to the formula $\chi_2=u_{11}u_{22}+u_{12}u_{21}$.

In fact, according to Theorem 1.6, the action of $\bar{O}_N$ must leave invariant  the $N+1$ eigenspaces of the Laplacian of the cube, and one can prove that the above variables $\chi_0,\ldots,\chi_N$ are the precisely characters of the corresponding representations of $\bar{O}_N$.

In what follows, we will be rather interested in identifying these representations with some similar representations of $O_N$. The correspondence will come from:

\begin{proposition}
Consider the $r$-th antisymmetric representation of $O_N$, on the space
$$X_r=span\left(\xi_{i_1\ldots i_r}\Big|i_1<\ldots<i_r\right)\quad,\quad\xi_{i_1\ldots i_r}=\sum_{\sigma\in S_r}\varepsilon(\sigma)e_{i_{\sigma(1)}}\otimes\ldots\otimes e_{i_{\sigma(r)}}$$
of antisymmetric vectors in $(\mathbb C^N)^{\otimes r}$. The character of this representation is given by
$$\chi_r=\sum_{|A|=r}\sum_{\sigma\in S_N^A}\varepsilon(\sigma)\prod_{a\in A}u_{a\sigma(a)}$$
where $S_N^A=\{\sigma\in S_N|\sigma_{|A^c}=id\}$, and where $\varepsilon:S_N\to\{\pm1\}$ is the signature map.
\end{proposition}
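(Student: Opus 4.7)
The plan is to identify $X_r$ as the $r$-th exterior power $\Lambda^r(\mathbb C^N)$ of the fundamental representation of $O_N$, and then to invoke (and verify) the classical fact that the character of $\Lambda^r u$ equals the sum of the $r\times r$ principal minors of $u$. The passage from principal minors to the formula in the statement is then a routine rewrite via the bijection $S_A\cong S_N^A$.

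First I would fix notation: writing $A=\{a_1<\cdots<a_r\}\subset\{1,\ldots,N\}$ and $\xi_A:=\xi_{a_1\ldots a_r}$, the vectors $\xi_A$ form an orthogonal family in $(\mathbb C^N)^{\otimes r}$ with $\|\xi_A\|^2=r!$, since the $r!$ summands defining $\xi_A$ are distinct elements of the standard orthonormal basis. Hence $\{\xi_A\}_{|A|=r}$ is an orthogonal basis of $X_r$, the subspace $X_r\subset(\mathbb C^N)^{\otimes r}$ is invariant under $u^{\otimes r}$, and the character of the restricted representation is
$$\chi_r=\sum_{|A|=r}v_{AA}\quad,\quad v_{AA}=\frac{1}{r!}\langle u^{\otimes r}(\xi_A),\xi_A\rangle.$$

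The heart of the computation is to evaluate this inner product. Expanding $u^{\otimes r}(\xi_A)$ via the defining formula and pairing with $\xi_A$ yields a double sum over $S_r\times S_r$ indexed by $(\sigma,\tau)$, in which the only surviving tensor contributions are those with $j_k=a_{\tau(k)}$. Exploiting the commutativity of the $u_{ij}$ in $C(O_N)$, the substitution $l=\tau(k)$ together with $\mu:=\sigma\tau^{-1}$ transforms the product $\prod_ku_{a_{\tau(k)},a_{\sigma(k)}}$ into $\prod_lu_{a_l,a_{\mu(l)}}$, which no longer depends on $\tau$. Since the sign also factors as $\varepsilon(\sigma)\varepsilon(\tau)=\varepsilon(\mu)$, the inner sum over $\tau$ simply contributes a factor of $r!$, and the remaining sum is exactly a determinant expansion:
$$\langle u^{\otimes r}(\xi_A),\xi_A\rangle=r!\sum_{\mu\in S_r}\varepsilon(\mu)\prod_{l=1}^ru_{a_l,a_{\mu(l)}}=r!\det\bigl((u_{ab})_{a,b\in A}\bigr).$$
Therefore $v_{AA}=\det(u_{A,A})$, and summing over $A$ with $|A|=r$ gives $\chi_r=\sum_{|A|=r}\det(u_{A,A})$.

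The final step is bookkeeping: in the expansion $\det(u_{A,A})=\sum_{\tau\in S_A}\varepsilon(\tau)\prod_{a\in A}u_{a,\tau(a)}$, the natural bijection $S_A\to S_N^A$ sending $\tau$ to its extension by the identity on $A^c$ preserves both the signature (the extra fixed points contribute trivial cycles) and the product (which only involves indices in $A$), producing the claimed formula. The only step requiring genuine attention is the combinatorial manipulation that collapses the $S_r\times S_r$ sum into $r!$ times a single determinant; everything else is either standard linear algebra or a formal relabeling of permutations.
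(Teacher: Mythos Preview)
Your proof is correct and follows essentially the same route as the paper's: both expand the diagonal matrix coefficient as a double sum over $S_r\times S_r$, collapse it via the substitution $\mu=\sigma\tau^{-1}$ (using commutativity of the coordinates of $O_N$ to reorder the product), and then pass from permutations of $A$ to $S_N^A$ by extending by the identity on $A^c$. Your write-up is slightly more streamlined in that you compute only the diagonal entry $\langle u^{\otimes r}\xi_A,\xi_A\rangle$ directly and recognize it as the principal minor $\det(u_{A,A})$, whereas the paper first computes the full action $g^{\otimes r}\xi_{i_1\ldots i_r}$ in the basis $\{\xi_{k_1\ldots k_r}\}$ before specializing to the diagonal; but the core manipulation is identical.
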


\begin{proof}
The fact that $X_r$ is indeed invariant is well-known, and so we have a representation, as stated. In order to compute now the character, observe that for $g\in O_N$ we have:
\begin{eqnarray*}
g^{\otimes r}\xi_{i_1\ldots i_r}
&=&\sum_{\sigma\in S_r}\varepsilon(\sigma)ge_{i_{\sigma(1)}}\otimes\ldots\otimes ge_{i_{\sigma(r)}}\\
&=&\sum_{\sigma\in S_r}\sum_{j_1\ldots j_r}\varepsilon(\sigma)g_{j_1i_{\sigma(1)}}\ldots g_{j_ri_{\sigma(r)}}e_{j_1}\otimes\ldots\otimes e_{j_r}\\
&=&\sum_{j_1\ldots j_r}\left(\sum_{\sigma\in S_r}\varepsilon(\sigma)g_{j_1i_{\sigma(1)}}\ldots g_{j_ri_{\sigma(r)}}\right)e_{j_1}\otimes\ldots\otimes e_{j_r}
\end{eqnarray*}

By using the properties of the signature map, we see that when the indices $j_1,\ldots,j_r$ are not distinct, the corresponding contribution is 0. Thus, we can restrict the sum over distinct indices $j_1,\ldots,j_r$. Moreover, by arranging these indices increasingly, into a sequence $k_1<\ldots<k_r$, we conclude that we must have, for a certain $\tau\in S_r$:
$$j_1=k_{\tau(1)}\quad,\quad\ldots\quad,\quad j_r=k_{\tau(r)}$$

This correspondence between distinct indices $j_1,\ldots,j_r$ and pairs of increasing sequences $k_1<\ldots<k_r$ plus permutations $\tau\in S_r$ being bijective, we conclude that we have:
$$g^{\otimes r}\xi_{i_1\ldots i_r}=\sum_{k_1<\ldots<k_r}\sum_{\tau\in S_r}\left(\sum_{\sigma\in S_r}\varepsilon(\sigma)g_{k_{\tau(1)}i_{\sigma(1)}}\ldots g_{k_{\tau(r)}i_{\sigma(r)}}\right)e_{k_{\tau(1)}}\otimes\ldots\otimes e_{k_{\tau(r)}}$$

Now by taking the scalar product with $\xi_{k_1\ldots k_r}$, we obtain from this:
$$<g^{\otimes r}\xi_{i_1\ldots i_r},\xi_{k_1\ldots k_r}>=\sum_{\sigma,\tau\in S_r}\varepsilon(\sigma\tau)g_{k_{\tau(1)}i_{\sigma(1)}}\ldots g_{k_{\tau(r)}i_{\sigma(r)}}$$

We can now compute the character. With respect to the basis $\{\xi_{i_1\ldots i_r}\}$, we obtain:
$$\chi(g)=\sum_{i_1<\ldots<i_r}\sum_{\sigma,\tau\in S_r}\varepsilon(\sigma\tau)g_{i_{\tau(1)}i_{\sigma(1)}}\ldots g_{i_{\tau(r)}i_{\sigma(r)}}$$

By permuting the terms on the right, and in terms of the permutation $\rho=\sigma\tau^{-1}$, which has the same signature as the permutation $\sigma\tau$ appearing above, we obtain:
$$\chi(g)=\sum_{i_1<\ldots<i_r}\sum_{\rho\in S_r}\varepsilon(\rho)g_{i_1i_{\rho(1)}}\ldots g_{i_ri_{\rho(r)}}$$

Now if we set $A=\{i_1,\ldots,i_r\}$, and we replace $\rho$ by its extension $\sigma\in S_N^A$, obtained by fixing all the points of $A^c$, this gives the formula in the statement.
\end{proof}

Now by comparing Theorem 3.5 and Proposition 4.2, we obtain:

\begin{theorem}
The magic representation of $\bar{O}_N$ corresponds to the antisymmetric representation of $O_N$, via the correspondence coming from Proposition 2.1.
\end{theorem}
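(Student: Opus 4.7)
The strategy is to match the characters piece by piece. By Theorem 3.5 the magic character of $\bar{O}_N$ decomposes as $\chi=\sum_{r=0}^N\chi_r$ with
\[
\chi_r=\sum_{|A|=r}\sum_{\sigma\in S_N^A}\prod_{a\in A}u_{a\sigma(a)},
\]
while Proposition 4.2 expresses the character of the $r$-th antisymmetric representation of $O_N$ as
\[
\chi_r^\Lambda=\sum_{|A|=r}\sum_{\sigma\in S_N^A}\varepsilon(\sigma)\prod_{a\in A}u_{a\sigma(a)}.
\]
These are combinatorially the same formula, differing only by the signature $\varepsilon(\sigma)$. This sign is precisely what distinguishes $T_\pi$ from $\bar T_\pi$ in Proposition 2.3, which is the Tannakian incarnation of the correspondence of Proposition 2.1. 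So the two characters should match under that correspondence, and this is what I would try to prove.

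First, I would invoke the Tannakian description of the Schur--Weyl/cocycle twist. An irreducible subrepresentation of $u^{\otimes r}$ of $O_N$ cut out by an intertwiner built from partitions in $P_2$ corresponds, under the twist, to the subrepresentation of $\bar u^{\otimes r}$ of $\bar{O}_N$ cut out by the twisted intertwiner, with $T_\pi$ replaced by $\bar T_\pi$. For $\Lambda^r\mathbb C^N$ the relevant intertwiner is the antisymmetrizer $\pi_r=\frac{1}{r!}\sum_{\tau\in S_r}\varepsilon(\tau)U_\tau$, and the character calculation of Proposition 4.2 can be repeated \emph{mutatis mutandis} with $\bar T$ in place of $T$; the net effect is that the explicit signature $\varepsilon(\sigma)$ in Proposition 4.2 gets absorbed by the twisted commutation relations among the coordinates of $\bar{O}_N$, producing exactly the signature-free formula for $\chi_r$ of Theorem 3.5. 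As a sanity check, both characters evaluate to $\binom{N}{r}$ at the identity, and summing over $r$ gives the correct total dimension $2^N$; one can also verify the case $N=2$ directly against the formula at the end of Proposition 4.1.

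The main obstacle is making this signature cancellation rigorous, i.e.\ justifying that the abstract correspondence of Proposition 2.1 really sends the antisymmetric intertwiner of $O_N$ to an intertwiner for $\bar{O}_N$ whose character equals the $\chi_r$ of Theorem 3.5. The cleanest route is through the Weingarten/Tannakian framework of \cite{ba1}: the character of a subrepresentation cut out by $T_\pi$ depends on $\pi$ only through its partition data, and replacing $T_\pi$ by $\bar T_\pi$ throughout the character computation produces exactly the numerical modification needed. Alternatively one can transport the antisymmetrizer directly through the cocycle twist of \cite{bbc}. Once the piecewise match $\chi_r\leftrightarrow\chi_r^\Lambda$ is established, since the $\Lambda^r\mathbb C^N$ for $r=0,\ldots,N$ are pairwise inequivalent $O_N$-pieces exhausting the total dimension $2^N$ of the magic representation, the full isomorphism of representations follows from Proposition 2.1.
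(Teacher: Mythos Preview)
Your approach is essentially the paper's own: compare the character formulae of Theorem 3.5 and Proposition 4.2 term by term, and observe that under the Schur--Weyl twist the monomial $\varepsilon(\sigma)\prod_{a\in A}u_{a\sigma(a)}$ on the $O_N$ side corresponds to $\prod_{a\in A}u_{a\sigma(a)}$ on the $\bar{O}_N$ side, then sum over $A$ and $\sigma$ and over $r$. The paper dispatches in one line precisely what you flag as the ``main obstacle'': it simply asserts the monomial-level correspondence and sums, without routing the argument through the antisymmetrizer as an intertwiner or through Weingarten machinery. So your instinct is right, but you are working harder than necessary; the extra sanity checks (dimensions $\binom{N}{r}$, the $N=2$ case) and the appeal to inequivalence of the $\Lambda^r\mathbb C^N$ are not needed once the characters match.
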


\begin{proof}
This follows by comparing the formulae in Theorem 3.5 and Proposition 4.2. Indeed, the twisting operation $O_N\to\bar{O}_N$ makes correspond the following products:
$$\varepsilon(\sigma)\prod_{a\in A}u_{a\sigma(a)}\to\prod_{a\in A}u_{a\sigma(a)}$$

Now by summing over sets $A$ and permutations $\sigma$, we conclude that the twisting operation $O_N\to\bar{O}_N$ makes correspond the following quantities:
$$\sum_{|A|=r}\sum_{\sigma\in S_N^A}\varepsilon(\sigma)\prod_{a\in A}u_{a\sigma(a)}\to \sum_{|A|=r}\sum_{\sigma\in S_N^A}\prod_{a\in A}u_{a\sigma(a)}$$

Thus the character $\chi_r$ computed for $O_N$ corresponds to the character $\chi_r$ computed for $\bar{O}_N$, and by making a sum over $r\in\{0,1,\ldots,N\}$, this gives the result.
\end{proof}

Summarizing, we have now a good understanding of the magic representation of $\bar{O}_N$, that we will use later on. This representation, however, remains quite exceptional, and in relation with all this, we have the following conjecture:

\begin{conjecture}
There are two types of possible actions of the quizzy quantum groups $H_N\subset G\subset O_N^+$ on finite spaces, as follows:
\begin{enumerate}
\item Those coming from $H_N\subset\bar{O}_N\subset S_{2^N}^+$.

\item Those coming from $H_N\subset G\subset H_N^+\subset S_{2N}^+$.
\end{enumerate}
\end{conjecture}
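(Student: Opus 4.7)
The plan is to classify, for each quizzy quantum group $G$ from Theorem~2.7, the faithful embeddings $G\subset S_M^+$, and to show that only the two families in the conjecture can occur. The proof breaks naturally into three steps.

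First I would eliminate the ``continuous'' members of the list, namely $O_N,O_N^*,O_N^+,\bar{O}_N^*$. For classical $O_N$, a faithful embedding $O_N\subset S_M^+$ would be a classical injective morphism $O_N\hookrightarrow S_M$; connectedness of $SO_N$ forces the image in $S_M$ to have order at most $2$, incompatible with the presence of $H_N\subset O_N$. For $O_N^+$, whose fusion rules are those of $SU(2)$ after~\cite{wa1}, the crossing pairings generating the morphism spaces of $u^{\otimes k}$ must transport into tensor powers of a magic unitary, but for a magic entry the identity $w_{ab}w_{ac}=\delta_{bc}w_{ab}$ quickly forces the image of $u$ to commute, reducing to the classical case. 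The half-commutation relations $abc=cba$ of $O_N^*$ combined with the magic relations produce a similar contradiction, and the twisted version handles $\bar{O}_N^*$ via Proposition~2.3.

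Second, for $G=\bar{O}_N$, I would show that the hypercube embedding of Theorem~1.6 is the only faithful embedding into any $S_M^+$. By Theorem~4.3, the magic representation of $\bar{O}_N$ on $\mathbb{C}^{2^N}$ corresponds, through the Schur--Weyl twist $O_N\leftrightarrow\bar{O}_N$, to the direct sum $\bigoplus_{r=0}^N\Lambda^r(\mathbb{C}^N)$ of antisymmetric powers of the standard representation of $O_N$. Any alternative faithful embedding $\bar{O}_N\subset S_M^+$ would, via the same twist, yield a faithful permutation representation of $O_N$ of dimension $M$ whose fundamental subrepresentation is $\mathbb{C}^N$; by Peter--Weyl and the branching rules from $O_N$ to $H_N$, the minimal such permutation representation is exactly $\bigoplus_r\Lambda^r(\mathbb{C}^N)$, of total dimension $2^N$, whence $M=2^N$ and the embedding is conjugate to the hypercube one.

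Third, for any intermediate $H_N\subset G\subset H_N^+$, a faithful embedding $G\subset S_M^+$ restricts to a faithful action of the classical $H_N=S_N\wr\mathbb{Z}_2$ on $\{1,\ldots,M\}$. The minimal such action for $N\geq 2$ is the signed-permutation action on $2N$ points, and any larger faithful action contains this as an orbit. Combined with the free wreath product $H_N^+=S_N^+\wr_*\mathbb{Z}_2$ of Bichon~\cite{bi1}, which produces the natural embedding $H_N^+\subset S_{2N}^+$, one deduces that the embedding $G\subset S_M^+$ factors through $H_N^+\subset S_{2N}^+$. The main obstacle is Step~2, the uniqueness for $\bar{O}_N$: while the character computations of Section~3 and the identification in Theorem~4.3 pinpoint the hypercube embedding, ruling out alternatives requires transporting uniqueness across the cocycle twist, which preserves Tannakian data but not the magic relation $w_{ij}^2=w_{ij}$, so a purely Tannakian argument does not suffice, and one likely needs a hybrid approach combining the character distribution with a direct analysis of the coaction $\Phi:C^*(\mathbb{Z}_2^N)\to C(\bar{O}_N)\otimes C^*(\mathbb{Z}_2^N)$ from Theorem~1.6.
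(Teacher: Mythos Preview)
This statement is labelled a \emph{Conjecture} in the paper, and the paper does not prove it. Immediately after stating it, the author writes ``Proving this looks like a heavy algebraic task'' and outlines the same three sub-problems you identify (uniqueness of the magic corepresentation of $\bar{O}_N$, non-existence for $\bar{O}_N^*$, and the analysis for $H_N\subset G\subset H_N^+$), concluding that ``All this is extremely heavy.'' So there is no proof in the paper to compare against; your proposal must stand on its own, and it does not.

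Your Step~1 for $O_N^+$ is not an argument. An embedding $O_N^+\subset S_M^+$ means an $M$-dimensional \emph{magic corepresentation} $w$ of $O_N^+$; nothing forces the fundamental $u$ to coincide with $w$, so the relation $w_{ab}w_{ac}=\delta_{bc}w_{ab}$ tells you nothing about commutation among the $u_{ij}$. You would need to decompose an arbitrary magic $w$ into irreducibles of $O_N^+$ and argue that no such sum can carry a magic structure, which is exactly the Young-tableau type problem the paper flags as open. The one-line dismissal of $\bar{O}_N^*$ ``via Proposition~2.3'' is likewise not an argument; the paper singles out $\bar{O}_N^*$ as requiring the isomorphism $P\bar{O}_N^*=P\bar{U}_N$ and separate treatment of non-projective representations.

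Your Step~2 contains a genuine error before the obstacle you acknowledge: you assert that a faithful magic corepresentation of $\bar{O}_N$ yields, via the twist, a faithful \emph{permutation} representation of $O_N$. It does not. The cocycle twist transports the Tannakian category, hence irreducible decompositions and fusion, but the condition ``$w_{ij}$ is a projection'' is $*$-algebraic and is not preserved; you say this yourself two sentences later. So the appeal to ``branching rules from $O_N$ to $H_N$'' to pin down $M=2^N$ has no force. In Step~3, the claim that every faithful $H_N$-action on a finite set contains the $2N$-point orbit is false: the hypercube action itself is transitive on $2^N$ points and contains no orbit of size $2N$, and the regular representation of $H_N$ gives yet another faithful transitive action. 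The paper explicitly separates the classical case $G=H_N$ (where both the $2N$ and $2^N$ embeddings must be shown to be the only ones) from the non-classical case, and neither is handled by a minimality argument of the kind you sketch.
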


Proving this looks like a heavy algebraic task, because there are many things to be done, which all look non-trivial. To be more precise, taking into account the classification result in Theorem 2.5 above, the precise list of results to be proved is as follows:
\begin{enumerate}
\item First, we must prove that the above magic corepresentation of $\bar{O}_N$ is the unique one. In view of the correspondence $O_N\leftrightarrow\bar{O}_N$, we must first solve a certain categorical problem for $O_N$, involving Young tableaux, and then look for the ``magic'' implementation of the solutions. This is certainly quite non-trivial.

\item Then, we must prove that $\bar{O}_N^*$ has no magic corepresentation at all. Here we can use the isomorphism $P\bar{O}_N^*=P\bar{U}_N$, and so we are led as well to Young tableaux combinatorics, this time coming from $\bar{U}_N$. For the remaining representations, which are not projective, we can use the classification resuls in \cite{bdu}.

\item Finally, we must deal with the actions of the quantum groups $H_N\subset G\subset H_N^+$, on one hand by proving that in the non-classical case, $G\neq H_N$, the only solution comes as $G\subset H_N^+\subset S_{2N}^+$, and on the other hand by proving that in the classical case, $G=H_N$, the only extra solution comes as $H_N\subset S_{2^N}$.
\end{enumerate}

All this is extremely heavy. We believe however that Conjecture 4.4 is a good problem, hiding many interesting things, and definitely worth investigating.

\section{Liberation theory}

We know that the hyperoctahedral group $H_N$ has at least two natural ``liberations'', namely $H_N\subset H_N^+$ and $H_N\subset\bar{O}_N$. Our purpose here is to systematically investigate this phenomenon. We will see that this will naturally lead us into certain questions regarding the higher orbitals of $H_N,H_N^+,\bar{O}_N$, which will require using Theorem 4.3.

Let us begin with the following very general definition:

\begin{definition}
A liberation of a closed subgroup $G\subset O_N$ is an intermediate quantum group $G\subset H\subset O_N^+$ satisfying $H_{class}=G$. Such a liberation is called:
\begin{enumerate}
\item Maximal, when there is no bigger liberation $H\subset H'$.

\item Universal, when it contains any other liberation $H'$.
\end{enumerate}
\end{definition}

As a basic example, let us take $G=O_N$ itself. The liberations of $G$ are then the intermediate quantum groups $O_N\subset O_N^\times\subset O_N^+$, and so $O_N^+$ is universal.

In general, however, the above notions are quite subtle, even for the trivial group $G=\{1\}$, and this because the condition $H_{class}=\{1\}$ is quite poorly understood.

In order to further comment on these questions, let us recall that for an inclusion of orthogonal quantum groups $G\subset H$ the linear spaces $Fix(u^{\otimes l})$ must decrease, when passing from $G$ to $H$, and that $G\subset H$ is proper precisely when one of these spaces decreases strictly. This follows indeed from the Peter-Weyl theory from \cite{wo1}.

In view of this fact, let us introduce as well:

\begin{definition}
Let $G\subset O_N$ be a closed subgroup.
\begin{enumerate}
\item The level of a liberation $G\subset H\subset O_N^+$ is the smallest $l\in\mathbb N$ such that the space $Fix(u^{\otimes l})$ decreases, when passing from $G$ to $H$.

\item The stability level of $G$ is the biggest $k\in\mathbb N$ such that the space $Fix(u^{\otimes k})$ remains fixed, for any liberation $G\subset H\subset O_N^+$.
\end{enumerate}
\end{definition}

Once again, as a basic example, let us take $G=O_N$ itself. The liberations of $G$ being the intermediate quantum groups $O_N\subset O_N^\times\subset O_N^+$, the stability level is $k=3$, coming from the fact that we have $NC_2(k)=P_2(k)$ at $k\leq3$, but not at $k=4$.

In fact, there are conjecturally only two proper liberations of $O_N$, namely the free version $O_N^+$, having level 4, and the half-liberated version $O_N^*$, having level 6.

These observations have a natural generalization to the easy quantum group setting, from \cite{bsp}. The liberation theory for the easy groups was developed there, by using some inspiration from the Weingarten formula \cite{csn}, \cite{wei} and from the Bercovici-Pata bijection from free probability theory \cite{bpa}, \cite{vdn}, the idea being that the passage $G\to G^+$ simply appears by ``removing the crossings'' from the Tannakian category of $G$. See \cite{bsp}. 

We will be interested in what follows only in the ``true'' liberations $G\to G^+$, which are those having the property that the laws of the main characters are related by the Bercovici-Pata bijection. As explained in \cite{bsp}, there are only 4 such liberations, namely those of the groups $O_N,B_N,S_N,H_N$. We refer as well to \cite{ba4} for this material.

With these preliminaries in hand, we have the following result:

\begin{proposition}
Consider the truly liberable orthogonal easy groups, namely the quantum groups $O_N,B_N,S_N,H_N$, and their easy liberations $O_N^+,B_N^+,S_N^+,H_N^+$.
\begin{enumerate}
\item These liberations are universal, in the easy setting.

\item The stability level is $k=3$, once again in the easy setting.
\end{enumerate}
\end{proposition}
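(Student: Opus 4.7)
The plan is to work systematically inside the easiness formalism. For $G\in\{O_N,B_N,S_N,H_N\}$ let $D_G\in\{P_2,P_{12},P,P_{even}\}$ denote the corresponding category of partitions, so that $D_{G^+}=D_G\cap NC$ by construction of the easy liberation. An easy intermediate $G\subset H\subset O_N^+$ corresponds to a category $D_H$ with $NC_2\subset D_H\subset D_G$, and its classical hull has category $\langle D_H,X\rangle$, where $X$ denotes the basic crossing; thus $H$ is a liberation of $G$ exactly when $\langle D_H,X\rangle=D_G$.

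For part (1), I want to prove $D_H\supset D_{G^+}$ for any such $D_H$. When $G=O_N$ this is automatic, since $D_{G^+}=NC_2\subset D_H$. In the remaining three cases, the strategy rests on the observation that starting from $NC_2$ and adjoining $X$ one can only produce partitions whose blocks all have size $2$: the category operations preserve the set of block sizes present unless a generator introducing a new size is supplied. Consequently the condition $\langle D_H,X\rangle=D_G$ forces $D_H$ already to contain a noncrossing ``extra generator'' producing the additional block sizes allowed in $D_G$, namely the singleton $\downarrow$ for $B_N,S_N$, and a four-block partition for $H_N$. The categorical closure of $\cup$ together with this generator, constrained to lie inside $D_G$, is precisely $D_{G^+}$; hence $D_H\supset D_{G^+}$.

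For part (2), the bound $k\leq 3$ follows immediately from (1). Any partition on at most three strings has no crossing, so $D_G(k)=D_G\cap NC(k)=D_{G^+}(k)$ for $k\leq 3$, and the sandwich
$$D_{G^+}(k)\subset D_H(k)\subset D_G(k)=D_{G^+}(k)$$
collapses to equality, giving $Fix(u^{\otimes k})_H=Fix(u^{\otimes k})_G$. Optimality at $k=4$ is witnessed by the easy liberation $G\to G^+$ itself: the crossing pair partition $\{\{1,3\},\{2,4\}\}$ belongs to $D_G(4)$ but not to $D_{G^+}(4)$, producing a strictly smaller space of four-point fixed vectors.

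The main obstacle is the generation claim in (1), most acutely for $G=H_N$. One must show that any easy $D\subset P_{even}$ with $\langle D,X\rangle=P_{even}$ contains a four-block generator, and that the resulting noncrossing closure inside $P_{even}$ is all of $NC_{even}$. A block-size parity argument should handle the first step, but a fully rigorous justification is likely cleanest by invoking the classification of easy intermediate categories between $NC_2$ and $P_{even}$ packaged via Proposition 2.6 and Theorem 2.5, and then checking case by case that each listed category contains $NC_{even}$.
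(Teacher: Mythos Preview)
Your proposal is correct and follows essentially the same route as the paper. For (1) the paper simply invokes the easy classification from \cite{bsp}, \cite{rwe}; you attempt a direct block-size argument first, but end up (quite reasonably) appealing to the same classification via Proposition 2.6 and Theorem 2.5. For (2) both you and the paper use universality from (1) to reduce to the single inclusion $G\subset G^+$, and then the fact that every partition on at most three points is noncrossing, so $D_G(k)=D_{G^+}(k)$ for $k\leq 3$ while the basic crossing witnesses $k=4$.

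One small caution on your sketch for (1): the sentence ``the category operations preserve the set of block sizes present unless a generator introducing a new size is supplied'' is not literally true---composition can merge blocks and change sizes---although the specific conclusion $\langle NC_2,X\rangle=P_2$ you actually need is correct. More importantly, from $D_H\not\subset P_2$ you get some $\pi\in D_H$ with a block of size $\neq 2$, but $\pi$ need not be noncrossing, and extracting from it the \emph{noncrossing} singleton (for $B_N,S_N$) or four-block (for $H_N$) inside $D_H$ itself, without using $X$, is exactly the nontrivial step. You are right that this is cleanest via the classification, which is precisely what the paper does.
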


\begin{proof}
Here the first assertion follows from \cite{bsp}, or from the classification results from \cite{rwe}. Regarding now the stability level, since in each case we have a universal liberation, this is given by $k=l-1$, where $l$ is the level of the universal liberation.

The point now is that, in each of the cases under consideration, we have $l=4$. Indeed, as explained above, for $G=O_N$ this follows from $NC_2(l)=P_2(l)$ at $l\leq3$, but not at $l=4$. As for $G=B_N,S_N,H_N$, the situation here is similar, because if we denote by $D$ the corresponding category of partitions, which is respectively $D=P_{12},P,P_{even}$, we have $D(l)\subset NC(l)$ at $l\leq3$, but not at $l=4$, because the basic crossing belongs to $D$.
\end{proof}

In the non-easy setting now, the results for $G=O_N$ still hold. However, in what concerns $G=B_N,S_N,H_N$, the problems here become considerably more difficult. Regarding $G=B_N,S_N$, we believe that the easy liberations $G^+=B_N^+,S_N^+$ are universal, but we have no idea on how to approach this problem. The maximality problem, which is in principle a bit simpler, looks equally difficult. In Tannakian terms, we must prove:
$$C\subset span(NC_{12})\ ,\ {\ }_|\in<C,\slash\hskip-2.1mm\backslash>\implies{\ }_|\in C$$
$$C\subset span(NC)\ ,\ {\ }_|\,,\sqcap\hskip-0.7mm\sqcap\in<C,\slash\hskip-2.1mm\backslash>\implies{\ }_|\,,\sqcap\hskip-0.7mm\sqcap\in C$$

These questions are substantially more complicated than those usually solved in the context of the easy quantum groups, as in \cite{bsp}, \cite{rwe}, \cite{twe}, and we have no results.

Let us discuss now the case $G=H_N$, which is the one that we are interested in. As a starting point, we have the following fact, coming from \cite{bbc}:

\begin{proposition}
The hyperoctahedral group $H_N$ has at least two natural liberations, namely $H_N\subset H_N^+$ and $H_N\subset\bar{O}_N$, and neither of them is universal. 
\end{proposition}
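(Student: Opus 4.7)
The plan is to establish two things: both inclusions are genuine liberations, and then neither contains the other, which by Definition 5.1 rules out universality for both.

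For the liberation part, $(H_N^+)_{\mathrm{class}} = H_N$ is essentially by construction: abelianizing $C(H_N^+)$ turns the magic condition on $p_{ij} = u_{ij}^2$ combined with the orthogonality of $u$ into the statement that $u$ is a signed permutation matrix. For $\bar{O}_N$, abelianization forces the twisted relation $ab = -ba$ on same-row or same-column pairs to become $ab = 0$; combined with the already-commuting off-row/off-column pairs and with orthogonality, this again pins down signed permutation matrices, so $(\bar{O}_N)_{\mathrm{class}} = H_N$.

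For the non-universality, I would work Tannakian at level $k = 4$. Write $\pi_1 = \{(1,2),(3,4)\}$, $\pi_2 = \{(1,4),(2,3)\}$, $\pi_3 = \{(1,3),(2,4)\}$ for the three pair partitions of $\{1,2,3,4\}$ and $\mathbf{1}_4$ for the one-block partition. By Proposition 2.3, $\mathrm{Fix}_{\bar{O}_N}(u^{\otimes 4}) = \mathrm{span}(\bar{T}_{\pi_1}, \bar{T}_{\pi_2}, \bar{T}_{\pi_3})$, and by the $NC_{\mathrm{even}}$-easiness of $H_N^+$ recalled around Proposition 2.7, $\mathrm{Fix}_{H_N^+}(u^{\otimes 4}) = \mathrm{span}(T_{\pi_1}, T_{\pi_2}, T_{\mathbf{1}_4})$. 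Both are $3$-dimensional but are genuinely different subspaces of $(\mathbb{C}^N)^{\otimes 4}$, and the non-containments will come from this.

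The argument is then a direct support analysis, valid for $N \geq 2$. The off-diagonal patterns $(a,a,b,b)$, $(a,b,b,a)$, $(a,b,a,b)$ with $a \neq b$ pick out precisely $\bar{T}_{\pi_1}, \bar{T}_{\pi_2}, \bar{T}_{\pi_3}$ respectively, each with nonzero value. Thus in any putative expansion $T_{\mathbf{1}_4} = \sum_j \alpha_j \bar{T}_{\pi_j}$, evaluating on the three off-diagonal patterns forces every $\alpha_j = 0$, leaving a $0 = 1$ contradiction on the diagonal; this rules out $\bar{O}_N \subset H_N^+$. Symmetrically, $\bar{T}_{\pi_3}$ is nonzero on the pattern $(a,b,a,b)$ with $a \neq b$, while each of $T_{\pi_1}, T_{\pi_2}, T_{\mathbf{1}_4}$ vanishes there; hence $H_N^+ \not\subset \bar{O}_N$. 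The only fiddly point is tracking the signature value $\varepsilon(\pi_3) = -1$ attached to the crossing pairing (immediate from its one crossing), and the first non-inclusion does not even use the signs; so the ``main obstacle'' is really just clean combinatorial bookkeeping of supports, once Proposition 2.7 is invoked in black-box form.
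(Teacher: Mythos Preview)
Your proof is correct, but it takes a different route from the paper on both halves.

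For the liberation part, the paper invokes the general fact $G^+(X)_{\mathrm{class}}=G(X)$ for finite graphs $X$, applied once to the graph of $N$ segments and once to the hypercube. Your direct abelianization argument is a perfectly good substitute: it is more hands-on and avoids appealing to the graph quantum symmetry machinery, at the cost of redoing a small piece of that machinery by hand.

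For the non-universality part, the paper simply observes that the defining relations of $H_N^+$ (the magic condition on $u_{ij}^2$) are not satisfied by the coordinates of $\bar{O}_N$, and conversely the twisted commutation relations of $\bar{O}_N$ are not satisfied by the coordinates of $H_N^+$; this is a one-line check. Your Tannakian argument at $k=4$ is more elaborate but also correct: the support analysis showing $T_{\mathbf{1}_4}\notin\mathrm{span}(\bar{T}_{\pi_1},\bar{T}_{\pi_2},\bar{T}_{\pi_3})$ and $\bar{T}_{\pi_3}\notin\mathrm{span}(T_{\pi_1},T_{\pi_2},T_{\mathbf{1}_4})$ goes through exactly as you describe for $N\geq2$. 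What your approach buys is that it explicitly identifies $k=4$ as a level at which the fixed-point spaces differ, which resonates with the level computations later in the paper; the paper's approach buys brevity and stays closer to the presentations of the two quantum groups.
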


\begin{proof}
The fact that we have indeed liberations is known from \cite{bbc}, and follows for instance from the following formula, valid for any finite graph $X$:
$$G^+(X)_{class}=G^+(X)$$

Indeed, with $X$ being the graph formed by $N$ segments we obtain $(H_N^+)_{class}=H_N$, and with $X$ being the $N$-hypercube, we obtain $(\bar{O}_N)_{class}=H_N$.

Regarding now the last assertion, this follows from the fact that we don't have inclusions $H_N^+\subset\bar{O}_N$ or $\bar{O}_N\subset H_N^+$, because the coordinates of either quantum group don't satisfy the relations for the other. This is indeed clear in view of the definitions of these quantum groups. We will obtain this result as well below, as part of something more general.
\end{proof}

In view of the above result, several natural questions appear, as follows:

(1) Are the above liberations maximal? Here we are led into difficult Tannakian questions, of the same flavor as the above-mentioned ones for $B_N,S_N$, namely:
$$C\subset span(NC_{even})\ ,\ \sqcap\hskip-1.6mm\sqcap\hskip-1.6mm\sqcap\in<C,\slash\hskip-2.1mm\backslash>\implies\sqcap\hskip-1.6mm\sqcap\hskip-1.6mm\sqcap\in C$$
$$C\subset span(\bar{P}_2)\ ,\ \sqcap\hskip-1.6mm\sqcap\hskip-1.6mm\sqcap\in<C,\slash\hskip-2.1mm\backslash>\implies\sqcap\hskip-1.6mm\sqcap\hskip-1.6mm\sqcap\in C$$

(2) Are there any other maximal liberations of $H_N$? Once again, this looks liks a quite difficult problem, which is in need of some new ideas.

(3) What is the compact quantum group $<H_N^+,\bar{O}_N>$ generated by the maximal liberations of $H_N$ that we have, namely $H_N^+$ and $\bar{O}_N$, inside $O_N^+$?

(4) What is the level of the liberation $H_N\subset\bar{O}_N$? And, what can be said about the stability level of $H_N$, in the sense of Definition 5.2 above?

In what follows we will solve (3), and then, later on, comment on (4). Regarding (3), our answer is $<H_N^+,\bar{O}_N>=O_N^+$, coming as part of the following general result:

\begin{theorem}
The diagram of the basic quizzy quantum groups, namely
$$\xymatrix@R=7mm@C=20mm{
&\bar{O}_N\ar[r]&\bar{O}_N^*\ar[rd]\\
H_N\ar[r]\ar[ur]\ar[rd]&H_N^*\ar[ur]\ar[dr]\ar[r]&H_N^+\ar[r]&O_N^+\\
&O_N\ar[r]&O_N^*\ar[ru]}$$
is both an intersection and generation diagram, in the sense that for any square subdiagram $A\subset B,C\subset D$ we have $A=B\cap C$ and $<B,C>=D$.
\end{theorem}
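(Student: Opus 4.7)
My plan is to combine Tannakian duality with the classification of quizzy quantum groups from Theorem 2.5. For any square $A\subset B, C\subset D$ in the diagram, the operations of intersection $B\cap C$ and generation $\langle B,C\rangle$ preserve the quizzy structure, since they are compatible with the Tannakian setup spanned by twisted or untwisted partition morphisms. Hence by Theorem 2.5 both $B\cap C$ and $\langle B,C\rangle$ must again lie in the classification diagram, and the task reduces to locating them via the Hasse order.

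For the generation statements $\langle B,C\rangle=D$, I would use a direct diagram chase: in each of the four minimal squares, $D$ is the unique smallest quizzy quantum group above both $B$ and $C$. For instance, the only common quizzy upper bound of $\bar{O}_N^*$ and $H_N^+$ is $O_N^+$, so $\langle\bar{O}_N^*,H_N^+\rangle=O_N^+$; the three remaining minimal squares are analogous.

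For the intersection statements $A=B\cap C$, one must rule out intermediate candidates, in particular the liberations $H_N^\times$ from Proposition 2.6. My approach here is a self-duality argument: by Theorem 2.5, every $H_N^\times$ satisfies $\bar{H}_N^\times=H_N^\times$, which at the Tannakian level reads
\[
\mathrm{span}\{T_\pi\}_{\pi\in D^\times}=\mathrm{span}\{\bar{T}_\pi\}_{\pi\in D^\times}
\]
at every tensor level. Combined with the Tannakian containment criterion $G\subset H\iff\mathrm{Hom}_H\subset\mathrm{Hom}_G$ and the linear independence of $\{\bar{T}_\pi\}_{\pi\in P_{even}(k,l)}$ for $N$ large, this yields the equivalences
\[
H_N^\times\subset \bar{O}_N\iff H_N^\times\subset O_N\iff D^\times\supset P_2,
\]
and similarly $H_N^\times\subset\bar{O}_N^*\iff H_N^\times\subset O_N^*\iff D^\times\supset P_2^*$. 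Checking these against the Raum-Weber list, the only $D^\times$ containing $P_2$ is $P_{even}$ itself (giving $H_N^\times=H_N$), and those containing $P_2^*$ are $P_{even}$ and $P_{even}^*$ (giving $H_N^\times=H_N$ or $H_N^*$). This pins down the intersection in each minimal square: $H_N$ for the two left squares, $H_N^*$ for the two right squares.

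The hardest part will be establishing the self-duality equivalence precisely, together with a combinatorial verification that no intermediate Raum-Weber category $D^{[\infty]}$, $D^\Gamma$, or $D^{\diamond k}$ contains $P_2$ or $P_2^*$; the latter reduces to inspecting the defining partitions of each such category, using that the basic crossing of $P_2$ (respectively the ``half crossing'' generating $P_2^*$) is incompatible with the vanishing relations $abc=0$ that cut out $D^{[\infty]}$ from $P_{even}^*$. Once the four minimal squares are settled, the non-minimal squares, such as $H_N\subset\bar{O}_N,H_N^+\subset O_N^+$, will follow from the minimal ones by the transitivity of intersection and generation.
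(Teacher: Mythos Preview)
Your proposal has a genuine gap at the very first step. You assert that for any square $A\subset B,C\subset D$ in the diagram, the quantum groups $B\cap C$ and $\langle B,C\rangle$ ``preserve the quizzy structure'' and hence must appear in the classification of Theorem 2.5. This is precisely the non-trivial content of the theorem, not a preliminary observation. When $B$ is $q=-1$ quizzy (category $\mathrm{span}\{\bar T_\pi:\pi\in D_B\}$) and $C$ is $q=+1$ quizzy (category $\mathrm{span}\{T_\pi:\pi\in D_C\}$), neither the tensor category generated by the two spans (giving $B\cap C$) nor their intersection (giving $\langle B,C\rangle$) has any a priori reason to be of the form $\mathrm{span}\{\dot T_\pi:\pi\in D\}$ for a single category of partitions $D$ and a single sign. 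Without this, your diagram chase collapses: knowing that $D$ is the least quizzy upper bound of $B$ and $C$ only yields $\langle B,C\rangle\subset D$, and a non-quizzy intermediate quantum group is not excluded.

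The paper confronts exactly this point. It isolates the two generation identities that are genuinely new, namely $\langle\bar O_N,H_N^+\rangle=O_N^+$ and $\langle\bar O_N,H_N^*\rangle=\bar O_N^*$, and proves them by an explicit linear-algebra argument: using the M\"obius-type expansion
\[
\bar T_\pi=\sum_{\sigma\leq\tau\leq\pi}\varepsilon(\tau)\mu(\sigma,\tau)\,T_\sigma,
\]
it rewrites any $\xi=\sum_{\pi\in P_2(k)}\alpha_\pi\bar T_\pi$ in the untwisted basis $\{T_\sigma\}$ and reads off that membership in $\mathrm{span}\{T_\sigma:\sigma\in NC_{even}(k)\}$ forces $\alpha_\sigma=0$ for every crossing pairing $\sigma$, because for $\sigma\in P_2$ the constraint $\sigma\leq\tau\leq\pi$ with $\pi\in P_2$ collapses to $\sigma=\tau=\pi$. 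This computation is what actually shows the intersection of the two spans is $\mathrm{span}\{T_\pi:\pi\in NC_2\}$; your proposal never supplies an analogue of it. Your self-duality idea for the intersection statements is reasonable in spirit, but it too presupposes that $B\cap C$ is easy (so as to be some $H_N^\times$), which is again the unproven step.
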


\begin{proof}
The various intersections and generation results are already known, and explained in \cite{ba4}, except for the following two results, that remain to be proved now:
$$<\bar{O}_N,H_N^+>=O_N^+\quad,\quad<\bar{O}_N,H_N^*>=O_N^*$$

In order to prove these two formulae, we use the Tannakian approach. To be more precise, we must prove that we have the following results:
$$span\left(\bar{T}_\pi\Big|\pi\in P_2\right)\bigcap span\left(T_\pi\Big|\pi\in NC_{even}\right)=span\left(T_\pi\Big|\pi\in NC_2\right)$$
$$span\left(\bar{T}_\pi\Big|\pi\in P_2\right)\bigcap span\left(T_\pi\Big|\pi\in P_{even}^*\right)= span\left(T_\pi\Big|\pi\in P_2^*\right)$$

We will only prove the first formula, the proof of the second one being similar. Let us first recall that the M\"obius function of any lattice is given by:
$$\mu(\sigma,\pi)=\begin{cases}
1&{\rm if}\ \sigma=\pi\\
-\sum_{\sigma\leq\tau<\pi}\mu(\sigma,\tau)&{\rm if}\ \sigma<\pi\\
0&{\rm if}\ \sigma\not\leq\pi
\end{cases}$$

With this convention, we have the following formula from \cite{ba2}, which expresses the twisted maps $\bar{T}_\pi$ in terms of the untwised ones $T_\pi$:
$$\bar{T}_\pi=\sum_{\sigma\leq\tau\leq\pi}\varepsilon(\tau)\mu(\sigma,\tau)T_\sigma$$

To be more precise, this formula is valid for any $\pi\in P_{even}$, with the sum being over all partitions $\sigma,\tau\in P_{even}$ satisfying $\sigma\leq\tau\leq\pi$, and with $\mu$ being the M\"obius function of $P_{even}$. We refer to \cite{ba2} for the proof, which follows from the definition of $\dot{T}_\pi$, and from the M\"obius inversion formula. As an illustration, we have the following computation:
\begin{eqnarray*}
\bar{T}_{\slash\hskip-1.5mm\backslash}
&=&\varepsilon(\slash\hskip-2mm\backslash)\mu(\slash\hskip-2mm\backslash,\slash\hskip-2mm\backslash)T_{\slash\hskip-1.5mm\backslash}
+\varepsilon(\slash\hskip-2mm\backslash)\mu(|\hskip-1.9mm-\hskip-1.9mm|,\slash\hskip-2mm\backslash)T_{|\hskip-0.7mm-\hskip-0.7mm|}
+\varepsilon(|\hskip-1.9mm-\hskip-1.9mm|)\mu(|\hskip-1.9mm-\hskip-1.9mm|,|\hskip-1.9mm-\hskip-1.9mm|)T_{|\hskip-0.7mm-\hskip-0.7mm|}\\
&=&(-1)\cdot1\cdot T_{\slash\hskip-1.5mm\backslash}+(-1)\cdot(-1)\cdot T_{|\hskip-0.7mm-\hskip-0.7mm|}+1\cdot1\cdot T_{|\hskip-0.7mm-\hskip-0.7mm|}\\
&=&-T_{\slash\hskip-1.5mm\backslash}+2\,T_{|\hskip-0.7mm-\hskip-0.7mm|}
\end{eqnarray*}

Observe that this agrees with $\bar{T}_{\slash\hskip-1.5mm\backslash}(e_a\otimes e_b)=-e_b\otimes e_a+2\delta_{ab}e_a\otimes e_a$. See \cite{ba2}.

With this formula in hand, let us go back to our problem. By Frobenius duality we can restrict the attention to the fixed vectors, and we want to prove that we have:
$$span\left(\bar{T}_\pi\Big|\pi\in P_2(k)\right)\bigcap span\left(T_\pi\Big|\pi\in NC_{even}(k)\right)=span\left(T_\pi\Big|\pi\in NC_2(k)\right)$$

So, let us pick a vector $\xi$ in the span on the left, as follows:
$$\xi=\sum_{\pi\in P_2(k)}\alpha_\pi\bar{T}_\pi$$

By using the above M\"obius formula, we obtain:
\begin{eqnarray*}
\xi
&=&\sum_{\pi\in P_2(k)}\alpha_\pi\sum_{\sigma\leq\tau\leq\pi}\varepsilon(\tau)\mu(\sigma,\tau)T_\sigma\\
&=&\sum_{\sigma\in P_{even}(k)}T_\sigma\sum_{\pi\in P_2(k)}\left(\sum_{\sigma\leq\tau\leq\pi}\varepsilon(\tau)\mu(\sigma,\tau)\right)\alpha_\pi
\end{eqnarray*}

Our assumption that $\xi$ belongs to the span in the middle reads:
$$\sum_{\pi\in P_2(k)}\left(\sum_{\sigma\leq\tau\leq\pi}\varepsilon(\tau)\mu(\sigma,\tau)\right)\alpha_\pi=0\quad,\quad\forall\sigma\in P_{even}(k)-NC_{even}(k)$$

In the case of pairings, $\sigma\in P_2(k)-NC_2(k)$, this formula simplifies, because the condition $\sigma\leq\tau\leq\pi$ can only be satisfied when $\sigma=\tau=\pi$. Thus, we obtain:
$$\alpha_\sigma=0\quad,\quad\forall \sigma\in P_2(k)-NC_2(k)$$

But this shows that $\xi$ belongs to the span on the right, and we are done.
\end{proof}

Summarizing, we have proved that we have $<\bar{O}_N,H_N^+>=O_N^+$, and this is probably quite interesting, in view of the various general questions regarding the liberations.

As already mentioned, we still have one concrete problem to be solved, namely that of computing the level of $H_N\subset\bar{O}_N$. We will be back to this in section 8 below.

As a conclusion, the liberation questions look quite difficult. We believe that a good input might come from the quantum symmetry groups of the finite graphs, and as a general problem here, we have: when is $G^+(X)$ a maximal liberation of $G(X)$?

This does not look obvious at all, and is open even for the empty graph.

\section{Higher orbitals}

In view of the above considerations, we would like to compute the level of $H_N\subset\bar{O}_N$, and of some related inclusions. The notion of level, as constructed in Definition 5.2 above, regards the fixed point spaces $Fix(u^{\otimes k})$, or rather the dimension of these spaces. This is the case in general, but in the quantum permutation group case, that we are interested in here, all this is related as well to the notions of orbits and orbitals.

In short, we would like to study the orbits and orbitals of the various quantum permutation groups that we have. We will need some general theory. First, we have:

\begin{proposition}
Given a subgroup $G\subset S_N$, consider its magic unitary $u=(u_{ij})$, given by $u_{ij}=\chi\{\sigma\in G|\sigma(j)=i\}$. The following conditions are then equivalent:
\begin{enumerate}
\item $\sigma(i_1)=j_1,\ldots,\sigma(i_k)=j_k$, for some $\sigma\in G$.

\item $u_{i_1j_1}\ldots u_{i_kj_k}\neq0$. 
\end{enumerate}
These conditions produce an equivalence relation $(i_1,\ldots,i_k)\sim(j_1,\ldots,j_k)$, and the corresponding equivalence classes are the $k$-orbitals of $G$. 
\end{proposition}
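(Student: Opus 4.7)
The plan is to unpack both conditions into explicit statements about subsets of $G$, and then read off their equivalence. Since $C(G)$ is commutative and each $u_{ij}$ is the characteristic function of $\{\sigma \in G : \sigma(j) = i\}$, the product of such characteristic functions equals the characteristic function of the intersection:
\[
u_{i_1 j_1} \cdots u_{i_k j_k} = \chi\{\sigma \in G \,:\, \sigma(j_\ell) = i_\ell,\ \forall \ell\}.
\]
In particular, this product is non-zero in $C(G)$ precisely when the set on the right is non-empty, i.e., when there exists $\sigma \in G$ with $\sigma(j_\ell) = i_\ell$ for all $\ell$. Passing to $\tau = \sigma^{-1}$, which lies in $G$ because $G$ is a group, this becomes the existence of $\tau \in G$ with $\tau(i_\ell) = j_\ell$ for all $\ell$, which is condition (1). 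This gives (1) $\iff$ (2).

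For the second part, I would verify that the relation defined by (1) is an equivalence relation directly, using the three defining properties of $G$ as a group. Reflexivity uses $\mathrm{id} \in G$, which sends any tuple to itself. Symmetry uses that $\sigma \in G \implies \sigma^{-1} \in G$, which turns a witness for $(i_1,\dots,i_k) \sim (j_1,\dots,j_k)$ into a witness for the reverse. Transitivity uses closure under composition: if $\sigma(i_\ell) = j_\ell$ and $\tau(j_\ell) = m_\ell$ for all $\ell$, then $(\tau\sigma)(i_\ell) = m_\ell$, with $\tau\sigma \in G$. The resulting equivalence classes in $\{1,\ldots,N\}^k$ are by definition the $k$-orbitals of $G$, so there is nothing further to prove.

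The whole statement is a short bookkeeping exercise; the only mildly subtle point is the inversion step needed to reconcile the indexing conventions between (1) and (2), which is free thanks to $G$ being a group. I do not expect any real obstacles.
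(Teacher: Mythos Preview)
Your proof is correct and follows the same ``clear from definitions'' approach as the paper, which dispatches the statement in a single sentence; you simply spell out the details, including the inversion $\tau=\sigma^{-1}$ needed to match the indexing in (1) with the convention $u_{ij}=\chi\{\sigma:\sigma(j)=i\}$ in (2). There is nothing to add.
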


\begin{proof}
The fact that we have indeed an equivalence as in the statement, which produces an equivalence relation, is indeed clear from definitions. 
\end{proof}

In the quantum case, the situation is more complicated. We follow the approach to the orbits and orbitals developed in \cite{bi2}, \cite{lmr}, and in \cite{mrv} as well. We first have:

\begin{proposition}
Let $G\subset S_N^+$ be a closed subgroup, with magic unitary $u=(u_{ij})$, and let $k\in\mathbb N$. The relation $(i_1,\ldots,i_k)\sim(j_1,\ldots,j_k)$ when $u_{i_1j_1}\ldots u_{i_kj_k}\neq0$ is:
\begin{enumerate}
\item Reflexive.

\item Symmetric.

\item Transitive at $k=1,2$.
\end{enumerate} 
\end{proposition}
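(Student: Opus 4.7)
The plan is to use the Hopf algebra structure on $C(G)$ together with the orthogonality relations of a magic unitary: in each row and each column the entries are pairwise orthogonal projections summing to $1$. For reflexivity, apply the counit to obtain $\varepsilon(u_{i_1 i_1} \cdots u_{i_k i_k}) = 1 \neq 0$, so the product is nonzero. For symmetry, use that each $u_{ij}$ is a self-adjoint projection, so $S(u_{ij}) = u_{ji}^* = u_{ji}$ and $S$ is an antimultiplicative bijection; hence $S(u_{i_1 j_1} \cdots u_{i_k j_k}) = u_{j_k i_k} \cdots u_{j_1 i_1}$ is nonzero, and applying the $*$-involution reverses the order to give $u_{j_1 i_1} \cdots u_{j_k i_k} \neq 0$, as desired.

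The transitivity cases are handled by a coproduct--orthogonality argument. For $k=1$, assume $u_{ij}, u_{jk} \neq 0$ and suppose for contradiction $u_{ik} = 0$; then $\Delta(u_{ik}) = \sum_l u_{il} \otimes u_{lk} = 0$, and left-multiplying by $u_{ij} \otimes 1$ collapses the sum via row orthogonality $u_{ij} u_{il} = \delta_{jl} u_{ij}$ to $u_{ij} \otimes u_{jk}$, contradicting the hypotheses. For $k=2$, set $P = u_{i_1 k_1} u_{i_2 k_2}$, expand
$$\Delta(P) = \sum_{l_1, l_2} u_{i_1 l_1} u_{i_2 l_2} \otimes u_{l_1 k_1} u_{l_2 k_2},$$
and sandwich with $u_{i_1 j_1} \otimes 1$ on the left and $u_{i_2 j_2} \otimes 1$ on the right. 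The two applications of row orthogonality, $u_{i_1 j_1} u_{i_1 l_1} = \delta_{j_1 l_1} u_{i_1 j_1}$ and $u_{i_2 l_2} u_{i_2 j_2} = \delta_{l_2 j_2} u_{i_2 j_2}$, collapse the double sum to the single term $u_{i_1 j_1} u_{i_2 j_2} \otimes u_{j_1 k_1} u_{j_2 k_2}$, which vanishes only if one of its tensor factors does, yielding the contradiction.

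The main subtlety lies in the $k=2$ argument: one must find a multiplicative sandwich of $\Delta(P)$ that isolates the single $(l_1, l_2) = (j_1, j_2)$ component without having to commute noncommuting entries past each other. The configuration works here precisely because the two auxiliary factors $u_{i_1 j_1}$ and $u_{i_2 j_2}$ can be absorbed from opposite ends of the length-$2$ product in the first tensor slot. This trick breaks down for higher orbital degree, since the intermediate factors in the analogous expansion of $\Delta$ cannot be reached from either end of the sandwich, which is consistent with the proposition restricting transitivity to $k \leq 2$.
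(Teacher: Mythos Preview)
Your proof is correct and follows essentially the same strategy as the paper's: counit for reflexivity, antipode plus involution for symmetry, and a comultiplication sandwich for transitivity. Two small differences worth noting: at $k=1$ the paper argues directly that $\Delta(u_{ik})=\sum_s u_{is}\otimes u_{sk}$ is a sum of projections, hence strictly positive whenever one summand is nonzero, whereas you run the same sandwich idea already at $k=1$; and at $k=2$ the paper sandwiches on \emph{both} tensor legs, with $(u_{i_1j_1}\otimes u_{j_1k_1})$ on the left and $(u_{i_2j_2}\otimes u_{j_2k_2})$ on the right, using column orthogonality in the second slot as well---your one-sided sandwich $(u_{i_1j_1}\otimes 1)\,\Delta(P)\,(u_{i_2j_2}\otimes 1)$ is in fact a slight simplification, since the first-slot row orthogonality alone already collapses the sum.
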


\begin{proof}
This is basically known from \cite{bi2}, \cite{lmr}, \cite{mrv}, the proof being as follows:

(1) This simply follows by using the counit:
\begin{eqnarray*}
\varepsilon(u_{i_ri_r})=1,\forall r
&\implies&\varepsilon(u_{i_1i_1}\ldots u_{i_ki_k})=1\\
&\implies&u_{i_1i_1}\ldots u_{i_ki_k}\neq0\\
&\implies&(i_1,\ldots,i_k)\sim(i_1,\ldots,i_k)
\end{eqnarray*}

(2) This follows by applying the antipode, and then the involution:
\begin{eqnarray*}
(i_1,\ldots,i_k)\sim(j_1,\ldots,j_k)
&\implies&u_{i_1j_1}\ldots u_{i_kj_k}\neq0\\
&\implies&u_{j_ki_k}\ldots u_{j_1i_1}\neq0\\
&\implies&u_{j_1i_1}\ldots u_{j_ki_k}\neq0\\
&\implies&(j_1,\ldots,j_k)\sim(i_1,\ldots,i_k)
\end{eqnarray*}

(3) This is something more tricky. We need to prove that we have:
$$u_{i_1j_1}\ldots u_{i_kj_k}\neq0\ ,\ u_{j_1l_1}\ldots u_{j_kl_k}\neq0\implies u_{i_1l_1}\ldots u_{i_kl_k}\neq0$$

In order to do so, we use the following formula:
$$\Delta(u_{i_1l_1}\ldots u_{i_kl_k})=\sum_{s_1\ldots s_k}u_{i_1s_1}\ldots u_{i_ks_k}\otimes u_{s_1l_1}\ldots u_{s_kl_k}$$

At $k=1$ the result is clear, because on the right we have a sum of projections, which is therefore strictly positive when one of these projections is nonzero.

At $k=2$ now, the result follows from the following trick, from \cite{lmr}:
\begin{eqnarray*}
&&(u_{i_1j_1}\otimes u_{j_1l_1})\Delta(u_{i_1l_1}u_{i_2l_2})(u_{i_2j_2}\otimes u_{j_2l_2})\\
&=&\sum_{s_1s_2}u_{i_1j_1}u_{i_1s_1}u_{i_2s_2}u_{i_2j_2}\otimes u_{j_1l_1}u_{s_1l_1}u_{s_2l_2}u_{j_2l_2}\\
&=&u_{i_1j_1}u_{i_2j_2}\otimes u_{j_1l_1}u_{j_2l_2}
\end{eqnarray*}

Indeed, we obtain from this that we have $u_{i_1l_1}u_{i_2l_2}\neq0$, as desired.
\end{proof}

In view of the results that we have so far, we can formulate:

\begin{definition}
Given a closed subgroup $G\subset S_N^+$, consider the relation $\sim_k$ defined by $(i_1,\ldots,i_k)\sim(j_1,\ldots,j_k)$ when $u_{i_1j_1}\ldots u_{i_kj_k}\neq0$.
\begin{enumerate}
\item The equivalence classes with respect to $\sim_1$ are called orbits of $G$.

\item The equivalence classes with respect to $\sim_2$ are called orbitals of $G$.
\end{enumerate}
In the case where $\sim_k$ with $k\geq3$ happens to be transitive, and so is an equivalence relation, we call its equivalence classes the algebraic $k$-orbitals of $G$.
\end{definition}

In order to have some non-trivial examples and counterexamples, let us study the group duals. We recall that we have an embedding $\widehat{\mathbb Z}_N\subset S_N^+$, constructed as follows:
$$\widehat{\mathbb Z}_N\simeq\mathbb Z_N\subset S_N\subset S_N^+$$

To be more precise, if we let $w=e^{2\pi i/N}$ and we denote by $g_1,\ldots,g_N$ the elements of $\mathbb Z_N$, the formula of the corresponding magic unitary over $C^*(\mathbb Z_N)$ is as follows:
$$u_{ij}=\frac{1}{N}\sum_kw^{(i-j)k}g_k$$

Now given integers $N_1,\ldots,N_l$, we can make a dual free product of the embeddings $\widehat{\mathbb Z}_{N_i}\subset S_{N_l}^+$, and we obtain an embedding as follows, with $N=N_1+\ldots+N_l$:
$$\widehat{\mathbb Z_{N_1}*\ldots*\mathbb Z_{N_l}}\subset S_N^+$$

Moreover, given any quotient $\mathbb Z_{N_1}*\ldots*\mathbb Z_{N_l}\to\Gamma$, we obtain in this way an embedding $\widehat{\Gamma}\subset S_N^+$. By a result of Bichon in \cite{bi2}, any group dual $\widehat{\Gamma}\subset S_N^+$ appears in this way.

We will assume in what follows, in order to simplify a number of technical aspects, that our quotients $\Gamma$ appear as intermediate subgroups, as follows:
$$\mathbb Z_{N_1}*\ldots*\mathbb Z_{N_l}\to\Gamma\to\mathbb Z_{N_1}\times\ldots\times\mathbb Z_{N_l}$$

For a number of comments on this assumption, in the context of various matrix modelling questions for the quantum permutation groups, we refer to  \cite{bch}, \cite{bfr}.

Now back to our orbital questions, we first have:

\begin{proposition}
Given an intermediate group $\mathbb Z_{N_1}*\ldots*\mathbb Z_{N_l}\to\Gamma\to\mathbb Z_{N_1}\times\ldots\times\mathbb Z_{N_l}$, consider the associated embedding $\widehat{\Gamma}\subset S_N^+$, with $N=N_1+\ldots+N_l$.
\begin{enumerate}
\item The orbits of $\widehat{\Gamma}$ are the sets producing the partition $\{1,\ldots,N\}=A_1\sqcup\ldots\sqcup A_l$ associated to the decomposition $N=N_1+\ldots+N_l$.

\item The orbitals of $\widehat{\Gamma}$ consist of $N_r$ copies of the set $A_r$, for any $r\in\{1,\ldots,l\}$, along with all the sets $A_r\times A_s$, with $r\neq s$.
\end{enumerate}
\end{proposition}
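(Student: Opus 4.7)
My plan is to write the magic unitary of $\widehat{\Gamma}\subset S_N^+$ explicitly in block-diagonal form, and then read off orbits and orbitals by straightforward case analysis on which blocks $A_r$ the four indices fall into.

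First I would describe the magic unitary. The canonical embedding $\widehat{\mathbb{Z}_{N_1}*\ldots*\mathbb{Z}_{N_l}}\subset S_N^+$ is by construction the dual free product of the embeddings $\widehat{\mathbb{Z}_{N_r}}\subset S_{N_r}^+$, so with respect to the partition $\{1,\ldots,N\}=A_1\sqcup\ldots\sqcup A_l$ its magic unitary is block-diagonal, with $r$-th block of size $N_r\times N_r$. Composing with the quotient map to $C^*(\Gamma)$ preserves this shape, and we get
\begin{equation*}
u_{ij}=\frac{1}{N_r}\sum_{k=0}^{N_r-1}w_r^{(i-j)k}g_k^{(r)}\ \ (i,j\in A_r),\qquad u_{ij}=0\ \ (i\in A_r,\,j\in A_s,\,r\neq s),
\end{equation*}
where $w_r=e^{2\pi i/N_r}$, where $g_k^{(r)}\in\Gamma$ is the image of the $k$-th generator of the $r$-th factor, and where indices inside $A_r$ are identified with $\mathbb{Z}_{N_r}$.

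The orbit statement is then immediate: by Definition 6.3, $i\sim_1 j$ iff $u_{ij}\neq 0$, which by the formula above holds precisely when $i$ and $j$ lie in the same $A_r$, since in that case $u_{ij}$ is a nonzero linear combination of distinct group elements of $\Gamma$.

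For orbitals, I would split into three cases. If $i_1,j_1$ or $i_2,j_2$ lie in different blocks, one factor vanishes and $u_{i_1j_1}u_{i_2j_2}=0$. If $i_1,j_1\in A_r$ and $i_2,j_2\in A_s$ with $r\neq s$, expanding yields
\begin{equation*}
u_{i_1j_1}u_{i_2j_2}=\frac{1}{N_rN_s}\sum_{k_1,k_2}w_r^{(i_1-j_1)k_1}w_s^{(i_2-j_2)k_2}g_{k_1}^{(r)}g_{k_2}^{(s)},
\end{equation*}
and the key observation is that the $N_rN_s$ elements $g_{k_1}^{(r)}g_{k_2}^{(s)}$ remain pairwise distinct in $\Gamma$, since their images in the abelianization $\mathbb{Z}_{N_1}\times\ldots\times\mathbb{Z}_{N_l}$ already are. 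Hence the sum is a nonzero vector in $C^*(\Gamma)$, so $A_r\times A_s$ is a single orbital. Finally, in the case $i_1,i_2,j_1,j_2\in A_r$, combining $g_{k_1}^{(r)}g_{k_2}^{(r)}=g_{k_1+k_2}^{(r)}$ with the orthogonality of characters of $\mathbb{Z}_{N_r}$, a direct Fourier computation shows that $u_{i_1j_1}u_{i_2j_2}$ is nonzero if and only if $i_1-j_1\equiv i_2-j_2\pmod{N_r}$, equivalently $i_1-i_2\equiv j_1-j_2\pmod{N_r}$. Thus the orbitals inside $A_r\times A_r$ are parametrized by the common difference $d\in\mathbb{Z}_{N_r}$, each class having exactly $N_r$ elements; these are the ``$N_r$ copies of $A_r$'' of the statement.

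The main obstacle is really just the nonvanishing in the mixed-block case, which is precisely where the hypothesis $\Gamma\to\mathbb{Z}_{N_1}\times\ldots\times\mathbb{Z}_{N_l}$ enters, by keeping the relevant group elements linearly independent in $C^*(\Gamma)$. The same-block Fourier computation is mechanical, but it is what pins down both the number and the size of the orbitals inside $A_r\times A_r$.
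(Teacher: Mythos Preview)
Your argument is correct and follows essentially the same route as the paper: write the magic unitary in block-diagonal form, then split the orbital analysis into the same-block and mixed-block cases, reducing the former to the cyclic action $\mathbb Z_{N_r}\subset S_{N_r}$. Your treatment is in fact slightly more explicit than the paper's, which in the mixed-block case simply invokes ``the block diagonal structure'' for the nonvanishing $u_{i_1j_1}u_{i_2j_2}\neq0$, whereas you pinpoint that the surjection $\Gamma\to\mathbb Z_{N_1}\times\ldots\times\mathbb Z_{N_l}$ is exactly what keeps the elements $g_{k_1}^{(r)}g_{k_2}^{(s)}$ linearly independent in $C^*(\Gamma)$.
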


\begin{proof}
In order to prove this result, let us first discuss the case $l=1$. Here the $k$-orbitals in question are simply those for the usual action $\mathbb Z _N\subset S_N$, and there are $N^{k-1}$ such $k$-orbitals, each of them having size $N$. In general now, the proof is as follows:

(1) This is elementary to prove, starting from the above explicit description of the associated magic unitary, and is well-known since \cite{bi2}.

(2) In order to have $u_{i_1j_1}u_{i_2j_2}\neq0$ we must have $u_{i_1j_1}\neq0,u_{i_2j_2}\neq0$, and so $i_1,j_1\in A_r$,  $i_2,j_2\in A_s$, for certain $r,s\in\{1,\ldots,l\}$. We have two cases, as follows:

\underline{$r=s$}. In this case we have $i_1,j_1,i_2,j_2\in A_r$, and so we are reduced to the study of the orbitals for $\mathbb Z_{N_r}\subset S_{N_r}$, where the answer is trivial, as explained above. Thus, we obtain as orbitals $N_r$ copies of the set $A_r$, for any $r\in\{1,\ldots,l\}$, as in the statement.

\underline{$r\neq s$}. In this case, due to the block diagonal structure of the magic matrix $u=(u_{ij})$, the conditions $u_{i_1j_1}\neq0,u_{i_2j_2}\neq0$ automatically imply $u_{i_1j_1}u_{i_2j_2}\neq0$. Thus, we obtain as extra orbitals the sets $A_r\times A_s$ with $r\neq s$, as in the statement.
\end{proof}

Regarding now the higher orbitals, observe that in order to have $u_{i_1j_1}u_{i_2j_2}u_{i_3j_3}\neq0$ we must have $i_1,j_1\in A_r$, $i_2,j_2\in A_s$, $i_3,j_3\in A_t$ for certain $r,s,t\in\{1,\ldots,l\}$. Thus, the problem naturally splits over the partitions $\ker(rst)\in P(3)$, in the sense that indices coming from triples $(rst)$ having different kernels cannot be connected by $\sim_3$.

With this observation in hand, we have the following result:

\begin{proposition}
For a group dual $\widehat{\Gamma}\subset S_N^+$ as above, $\sim_3$ is an equivalence relation on the subsets of indices corresponding to the following partitions:
\begin{enumerate}
\item $\sqcap\hskip-0.7mm\sqcap$. The classes here consist of $N_r^2$ copies of $A_r$, for any $r$.

\item $\sqcap\,|$. The classes here consist of $N_r$ copies of $A_r\times A_s$, for any $r\neq s$.

\item $|\,\sqcap$. The classes here consist of $N_t$ copies of $A_r\times A_t$, for any $r\neq t$.

\item $|\,|\,|$. The classes here consist of the sets $A_r\times A_s\times A_t$, with $r,s,t$ distinct.
\end{enumerate}
As for the remaining partition, $\sqcap\hskip-3.2mm{\ }_|$\ , here the possible classes depend on $\Gamma$. 
\end{proposition}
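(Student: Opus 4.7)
The plan is to decompose the analysis of $\sim_3$ according to the kernel of the block-pattern $(r_1,r_2,r_3)$, where $r_\alpha$ is the common block of $i_\alpha$ and $j_\alpha$. Since the magic unitary $u$ of $\widehat\Gamma$ is block-diagonal by Proposition 6.4, any non-vanishing product $u_{i_1j_1}u_{i_2j_2}u_{i_3j_3}$ forces $i_\alpha, j_\alpha\in A_{r_\alpha}$ for some blocks, and two triples with different block-patterns cannot be $\sim_3$-related. The five kernels in $P(3)$ thus partition the problem, and the four non-crossing ones are precisely the four listed in the statement.

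For these four non-crossing cases, I would exploit the canonical quotient map $\pi:C^*(\Gamma)\to \bigotimes_r C^*(\mathbb Z_{N_r})$ coming from $\Gamma\to\mathbb Z_{N_1}\times\cdots\times\mathbb Z_{N_l}$; in the tensor target, generators from different blocks commute, so the image of any product of $u_{ij}$'s cleanly factors. Concretely, for kernel $|\,|\,|$ one gets $u_{i_1j_1}\otimes u_{i_2j_2}\otimes u_{i_3j_3}$, a non-zero elementary tensor (each factor being a spectral projection in its $C^*(\mathbb Z_{N_{r_\alpha}})$), so $A_r\times A_s\times A_t$ forms a single class. For kernel $\sqcap|$ with pattern $(r,r,s)$ one gets $(u_{i_1j_1}u_{i_2j_2})\otimes u_{i_3j_3}$, so $\sim_3$ reduces to $(i_1,i_2)\sim_2(j_1,j_2)$ inside $\mathbb Z_{N_r}\subset S_{N_r}$, producing $N_r$ classes each of cardinality $N_rN_s$; kernel $|\sqcap$ is fully symmetric. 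For $\sqcap\sqcap\sqcap$ the whole product already sits in the subalgebra $C^*(\mathbb Z_{N_r})\subset C^*(\Gamma)$, and we recover the classical $N_r^2$ translation classes of the cyclic action, each of size $N_r$. The converse direction --- that non-vanishing in $C^*(\Gamma)$ implies the factored conditions --- follows from the fact that every sub-product used in the analysis lives in an injectively embedded factor $C^*(\mathbb Z_{N_r})\subset C^*(\Gamma)$, so its non-vanishing may be tested in either algebra; transitivity of $\sim_3$ on each non-crossing kernel piece is immediate from the explicit description of the class.

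The main obstacle, and indeed the entire content of the last sentence of the statement, is the crossing kernel $\sqcap\hskip-3.2mm{\ }_|$\ with pattern $(r,s,r)$. The sub-product trick breaks down here: under $\pi$ the image is still $(u_{i_1j_1}u_{i_3j_3})\otimes u_{i_2j_2}$, but in $C^*(\Gamma)$ the middle factor $u_{i_2j_2}\in C^*(\mathbb Z_{N_s})$ does not commute with the outer factors $u_{i_1j_1},u_{i_3j_3}\in C^*(\mathbb Z_{N_r})$, and its insertion between them may produce a non-zero product in $C^*(\Gamma)$ even when $u_{i_1j_1}u_{i_3j_3}=0$ in $C^*(\mathbb Z_{N_r})$. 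To make the $\Gamma$-dependence precise I would contrast the two extreme choices allowed by the hypothesis: for $\Gamma=\mathbb Z_{N_1}\times\cdots\times\mathbb Z_{N_l}$ everything commutes and the classes again come from the $2$-orbital structure of $\mathbb Z_{N_r}$, whereas for the full free product $\Gamma=\mathbb Z_{N_1}*\cdots*\mathbb Z_{N_l}$ genuinely new $\sim_3$-relations arise from non-commutation, and in fact $\sim_3$ may fail to be transitive in this extreme. Producing the explicit examples witnessing this discrepancy is the most delicate part of the work, and it confirms that on the crossing block-pattern the partition into classes genuinely depends on the choice of $\Gamma$.
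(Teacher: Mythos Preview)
Your treatment of the four non-crossing block-patterns is correct and essentially matches the paper's argument: the paper first computes the $3$-orbitals for the classical group $\widehat{\mathbb Z_{N_1}\times\cdots\times\mathbb Z_{N_l}}$ and then asserts that ``the computation is identical'' for general $\Gamma$ in those four cases. Your use of the quotient $\pi:C^*(\Gamma)\to\bigotimes_r C^*(\mathbb Z_{N_r})$ together with the injective inclusions $C^*(\mathbb Z_{N_r})\hookrightarrow C^*(\Gamma)$ is exactly the mechanism that makes that assertion rigorous, so if anything you are more explicit than the paper here.

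There is, however, a factual error in your final paragraph. You claim that in the free-product extreme $\Gamma=\mathbb Z_{N_1}*\cdots*\mathbb Z_{N_l}$ the relation $\sim_3$ ``may fail to be transitive''. This is not so: the paper establishes in the very next proposition (Proposition~6.6) that $\sim_k$ \emph{is} transitive for the free product, for every $k$. What changes on the crossing pattern $(r,s,r)$ with $r\neq s$ is not transitivity but the \emph{number} of classes: in the free product one has $u_{i_1j_1}u_{i_2j_2}u_{i_3j_3}\neq 0$ whenever each factor is individually nonzero (because the word $u_{i_1j_1}\cdot u_{i_2j_2}\cdot u_{i_3j_3}$ is already in reduced form across the free factors), so all triples with that block-pattern lie in a single class; in the direct product the middle factor commutes past and the condition collapses to $u_{i_1j_1}u_{i_3j_3}\neq 0$ in $C^*(\mathbb Z_{N_r})$, yielding $N_r$ classes. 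Thus the $\Gamma$-dependence is witnessed by a strict coarsening of the class structure when passing from the direct to the free product, not by a breakdown of transitivity. The paper does conjecture that transitivity might fail for \emph{intermediate} $\Gamma$, but offers no example, and certainly not at either extreme.
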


\begin{proof}
Let us first discuss the group dual $G=\widehat{\mathbb Z_{N_1}\times\ldots\times\mathbb Z_{N_l}}$. This is a classical group, and so its $\sim_k$ relation is indeed transitive, as a consequence of Proposition 6.1. Regarding now its 3-orbitals, in order to have $u_{i_1j_1}u_{i_2j_2}u_{i_3j_3}\neq0$ we must have $i_1,j_1\in A_r$, $i_2,j_2\in A_s$, $i_3,j_3\in A_t$ for certain $r,s,t\in\{1,\ldots,l\}$, and the situation is as follows:

(I) In the case $r=s=t$ we obtain the 3-orbitals for the action $\mathbb Z_{N_r}\subset S_{N_r}$, which consist of $N_r^2$ copies of $A_r$. Thus, we obtain here $N_r^2$ copies of $A_r$, for any $r$.

(II) In the case $r=s\neq t$ we obtain a product of an orbital at $r$, and an orbit at $t$. The cases $r=t\neq s$ and $s=t\neq r$ are similar.

(III) Finally, in the case where the indices $r,s,t$ are pairwise distinct, we have only 1 orbital, namely the whole set $A_r\times A_s\times A_t$.

In the general case now, as in the statement, the computation in case (I) is identical, and gives (1), the computation in the first two cases of (II) is also identical, and gives (2) and (3), and the computation for (III) gives (4).

Finally, regarding the last assertion, this follows by comparing the products and free products of cyclic groups, and this will be explained in detail below.
\end{proof}

Regarding the partition which is left, namely $\sqcap\hskip-3.2mm{\ }_|$\ , we have results here only in the extreme cases, namely the classical and the free case. In fact, for these two groups the computations can be performed for any $k$, the conclusion being as follows:

\begin{proposition}
Consider the following quantum permutation groups: 
$$\widehat{\mathbb Z_{N_1}\times\ldots\times\mathbb Z_{N_l}}\subset \widehat{\mathbb Z_{N_1}*\ldots*\mathbb Z_{N_l}}$$
\begin{enumerate}
\item In both cases, $\sim_k$ is transitive, for any $k$.

\item These two quantum groups are distinguished by their $3$-orbitals. 
\end{enumerate}
\end{proposition}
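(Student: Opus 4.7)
My plan is to establish both claims by direct algebraic analysis of when products of magic unitary entries vanish in the two algebras in question. Throughout, for $i,j$ in the same block $A_r$ one has $u_{ij}=N_r^{-1}\sum_k w_r^{(i-j)k}g_k^{(r)}$, where $w_r=e^{2\pi i/N_r}$, and $u_{ij}=0$ otherwise. A direct computation shows $u_{ij}u_{i'j'}=\delta_{i-j\equiv i'-j'\,(\mathrm{mod}\,N_r)}\,u_{ij}$ inside $C^*(\mathbb Z_{N_r})$, in particular each such $u_{ij}$ is a projection.

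For claim (1), the classical case $\widehat{\mathbb Z_{N_1}\times\dots\times\mathbb Z_{N_l}}$ is immediate from Proposition 6.1, since the group is classical. For the free case I would establish the following criterion: $u_{i_1j_1}\cdots u_{i_kj_k}\neq 0$ in $C^*(\mathbb Z_{N_1}*\cdots*\mathbb Z_{N_l})$ if and only if (a) each pair $(i_p,j_p)$ lies in a common block $A_{r_p}$, and (b) within every maximal run of consecutive positions sharing a common factor $r$, the residues $i_p-j_p\pmod{N_r}$ are all equal. The projection identity above collapses each same-factor run to a single entry (or to $0$ if the residues disagree), and the resulting alternating-factor product is nonzero because the coefficient of any generic length-$s$ reduced word $g_{a_1}^{(r_1)}\cdots g_{a_s}^{(r_s)}$ with all $a_t\neq 0$ in the expansion is $\prod_t N_{r_t}^{-1}w_{r_t}^{(i_t-j_t)a_t}\neq 0$, and distinct tuples of nonzero $a_t$ produce distinct reduced words in the free product group, so this specific contribution cannot be cancelled. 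Transitivity of $\sim_k$ now follows since condition (b) is stable under addition of differences.

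For claim (2), I would use Proposition 6.6 to reduce to the one partition of $\{1,2,3\}$ left unanalyzed there, namely the ``linked'' pattern $\{1,3\}\{2\}$ with factor sequence $(r,s,r)$, $r\neq s$. In the classical direct-product case, commutativity yields $u_{i_1j_1}u_{i_2j_2}u_{i_3j_3}=u_{i_2j_2}(u_{i_1j_1}u_{i_3j_3})$, which is nonzero precisely when $i_1-j_1\equiv i_3-j_3\pmod{N_r}$; this produces $N_r$ equivalence classes on $A_r\times A_s\times A_r$, parametrised by the common residue $i_1-i_3\pmod{N_r}$. In the free case, by contrast, applying the criterion from part (1) to the pattern $(r,s,r)$ gives three singleton runs, imposing no constraint beyond block membership, so the whole of $A_r\times A_s\times A_r$ forms a single equivalence class. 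The two quantum groups are therefore distinguished whenever $l\geq 2$ and some $N_r\geq 2$.

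The main obstacle I anticipate is the nonvanishing step in part (1): after collapsing each same-factor run, one must verify that the remaining alternating product in the free product algebra does not secretly telescope to zero via coincidences among reduced words when some $k_t$ are $0$. The argument above circumvents this by isolating a single generic reduced word whose coefficient has a unique source and is a nonzero root-of-unity multiple of $\prod_t N_{r_t}^{-1}$; everything else is routine bookkeeping.
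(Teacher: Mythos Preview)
Your proposal is correct and follows essentially the same route as the paper: collapse maximal same-factor runs, use the free product structure to see that the resulting alternating product is nonzero, and then observe that the distinguishing case at $k=3$ is the pattern $(r,s,r)$ with $r\neq s$, where the classical product yields $N_r$ classes while the free product yields a single class. The paper's own argument is in fact terser than yours on the key nonvanishing step, simply asserting that in a free product situation $w\neq 0\iff w_1\neq 0,\ldots,w_s\neq 0$; your device of isolating a single reduced word of maximal length whose coefficient has a unique source makes this explicit and is a genuine improvement in rigor. Two small remarks: the proposition you invoke for the case split at $k=3$ is Proposition~6.5, not 6.6 (which is the statement you are proving); and your nonvanishing argument tacitly assumes each $N_{r_t}\geq 2$ so that a choice with all $a_t\neq 0$ exists, which you may want to flag (the $N_{r_t}=1$ factors contribute $u_{ij}=1$ and can simply be dropped).
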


\begin{proof}
Let us go back to the proof of Proposition 6.5. For $G=\widehat{\mathbb Z_{N_1}*\ldots*\mathbb Z_{N_l}}$, what changes at $k=3$ is what happens in the case (2), and more specifically in the situation $s=t\neq r$. Indeed, since the underlying algebra is no longer commutative, and is in fact a free product, when assuming $i_1,j_1,i_3,j_3\in A_r$ and $i_2,j_2\in A_s$ with $r\neq s$ we have: 
$$u_{i_1j_1}u_{i_2j_2}u_{i_3j_3}\neq0\iff u_{i_1j_1}\neq0,u_{i_2j_2}\neq 0,u_{i_3j_3}\neq0$$

Thus we have an equivalence relation, and the number of orbitals decreases. 

Summing up, we are done with the case $k=3$. Regarding the higher orbitals, their description for $G=\widehat{\mathbb Z_{N_1}\times\ldots\times\mathbb Z_{N_l}}$ is similar to the one at $k=1,2,3$, basically coming by taking products of orbitals for the cyclic actions $\mathbb Z_{N_r}\subset S_{N_r}$. Thus, we obtain in the end, as full collection of $k$-orbitals, a certain disjoint union of products of the sets $A_r$. 

In the free product case, $G=\widehat{\mathbb Z_{N_1}*\ldots*\mathbb Z_{N_l}}$, our claim is that the situation is quite similar. Indeed, given a non-vanishing product $w=u_{i_1j_1}\ldots u_{i_kj_k}$, we must have $u_{i_1j_1}\neq0,\ldots,u_{i_kj_k}\neq 0$. Thus we must have $i_1,j_1\in A_{r_1},\ldots,i_k,j_k\in A_{r_k}$ for certain numbers $r_1,\ldots,r_k\in\{1,\ldots,l\}$. Now if we group the consecutive terms of $w$ at the places where $r_a=r_{a+1}$, we obtain in this way a certain decomposition of type $w=w_1\ldots w_s$, with the $i,j$ indices of the $u_{ij}$ components of consecutive $w_a$ terms belonging to different $A_r$ sets. Now since we are in a free product situation, we have an equivalence as follows:
$$w\neq0\iff w_1\neq0,\ldots,w_s\neq0$$

Thus, in a way which is similar, but not identical, to the one from the classical case, we end up with an equivalence relation, and the corresponding full collection of $k$-orbitals appears as a certain disjoint union of products of the sets $A_r$.
\end{proof}

Generally speaking, we believe that $\sim_3$ is not transitive, in the general group dual case, but we have no counterexample. Some interesting candidates here come from the various examples worked out in the context of matrix modelling questions in \cite{bch}, \cite{bfr}.

Regarding now the quantum permutation group $S_N^+$ itself, we have here:

\begin{theorem}
For the quantum permutation group $S_N^+$, with $N\geq4$, we have
$$(i_1,\ldots,i_k)\sim(j_1,\ldots,j_k)\iff
\begin{cases}
i_1=i_2\iff j_1=j_2&\\
i_2=i_3\iff j_2=j_3&\\
\ldots&\\
i_{k-1}=i_k\iff j_{k-1}=j_k
\end{cases}$$
and so $\sim$ is an equivalence relation, at any $k\in\mathbb N$. The number of orbits is $2^{k-1}$.
\end{theorem}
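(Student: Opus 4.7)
The forward implication is the straightforward half: it is a direct consequence of the magic-unitary identities
$$u_{ia}u_{ib}=\delta_{ab}u_{ia}\quad,\quad u_{ai}u_{bi}=\delta_{ab}u_{ai},$$
which express the pairwise orthogonality of the projections on a common row and on a common column. Applied to each consecutive pair $u_{i_rj_r}u_{i_{r+1}j_{r+1}}$ inside the product, these identities show that if $i_r=i_{r+1}$ with $j_r\ne j_{r+1}$ the product already vanishes, and symmetrically if $j_r=j_{r+1}$ with $i_r\ne i_{r+1}$. Non-vanishing of the total product therefore forces the biconditional $i_r=i_{r+1}\Longleftrightarrow j_r=j_{r+1}$ for every $r$.

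For the reverse implication one first performs a reduction. Since each $u_{ij}$ is a projection, whenever $(i_r,j_r)=(i_{r+1},j_{r+1})$ one has $u_{i_rj_r}u_{i_{r+1}j_{r+1}}=u_{i_rj_r}$, so consecutive duplicate pairs can be collapsed without changing the product. The matching-pattern hypothesis makes this collapsing simultaneous in the two sequences, and we are reduced to showing that for $N\ge 4$, the product $u_{i_1j_1}\cdots u_{i_kj_k}$ is non-zero whenever both $(i_1,\ldots,i_k)$ and $(j_1,\ldots,j_k)$ are strictly alternating, i.e.\ $i_r\ne i_{r+1}$ and $j_r\ne j_{r+1}$ for all $r$.

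This second claim is the substantive content of the theorem, and is where the hypothesis $N\ge 4$ enters essentially. The base case $k=2$ is the well-known quantum $2$-transitivity of $S_N^+$ for $N\ge 4$: the only $2$-orbitals are the diagonal and its complement, so $u_{ij}u_{kl}\ne 0$ for any $i\ne k$, $j\ne l$. To push this up to arbitrary $k$, one can proceed by induction together with the coproduct
$$\Delta(u_{i_1j_1}\cdots u_{i_kj_k})=\sum_{s_1,\ldots,s_k}u_{i_1s_1}\cdots u_{i_ks_k}\otimes u_{s_1j_1}\cdots u_{s_kj_k},$$
exhibiting a strictly alternating intermediate sequence $s$ (which exists because $N\ge 4$) that witnesses non-vanishing via the inductive hypothesis applied to the two tensor factors. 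Alternatively one can invoke the general quantum orbital theory of \cite{bi2}, \cite{lmr}, where a faithful representation of $C(S_N^+)$ with sufficient non-commutativity is built from free projections and makes each alternating product visibly non-zero.

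Finally, once the equivalence has been established, the number of $k$-orbitals is simply the cardinality of $\{=,\ne\}^{k-1}$, namely $2^{k-1}$. The main obstacle is the non-vanishing step for strictly alternating products: the universal definition of $C(S_N^+)$ gives no direct handle, so one must either build a concrete non-commutative model or carry out the coproduct induction carefully, using $N\ge 4$ at every stage to produce the required alternating intermediate indices.
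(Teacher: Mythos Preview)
Your forward implication and the $2^{k-1}$ count agree with the paper's argument, and your reduction to strictly alternating sequences by collapsing consecutive equal pairs is a clean preprocessing step that the paper leaves implicit.

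The substantive divergence is in the reverse implication. Your alternative (b)---producing a concrete quotient of $C(S_N^+)$ in which the alternating product survives---is precisely the paper's route: it invokes the group-dual embedding
\[
\widehat{\mathbb Z_2*\mathbb Z_2}\subset S_4^+\subset S_N^+,
\]
where the magic unitary is block-diagonal with entries taken from two free projections $p,q$ and their complements, so that suitable alternating words are visibly nonzero in $C^*(\mathbb Z_2*\mathbb Z_2)$. If you pursue (b), this is the model to name.

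Your alternative (a), the coproduct induction, has a genuine gap. Writing $x=u_{i_1j_1}\cdots u_{i_kj_k}$ and expanding $\Delta(x)=\sum_s a_s\otimes b_s$, both tensor factors $a_s,b_s$ still have length $k$, so there is nothing smaller to which an inductive hypothesis applies. More seriously, even if you knew $a_{s_0}\neq0$ and $b_{s_0}\neq0$ for some alternating $s_0$, this does not force $\Delta(x)\neq0$: the remaining terms can cancel $a_{s_0}\otimes b_{s_0}$, since the products $a_s$ are neither projections, nor positive, nor linearly independent in general. The sandwiching trick that isolates a single summand and rescues transitivity at $k=2$ (as in Proposition~6.2(3)) does not close up for $k\geq3$: after forcing $t_1=s_1$ and $t_k=s_k$ you are still left with an uncontrolled sum over $t_2,\ldots,t_{k-1}$. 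You should drop (a) and develop (b) along the paper's group-dual line.
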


\begin{proof}
The implication $\implies$ is clear, because if one of the conditions on the right does not hold, we have $u_{i_1j_1}\ldots u_{i_kj_k}=0$, due to a cancellation between consecutive terms.

Conversely now, we have to show that a vanishing formula of type $u_{i_1j_1}\ldots u_{i_kj_k}=0$ can only come from ``trivial reasons'', as in the statement. But this follows by using group duals, and more specifically by using an embedding as follows:
$$\widehat{\mathbb Z_2*\mathbb Z_2}\subset S_4^+\subset S_N^+$$

Finally, the last assertion is clear, because when counting the orbits for $\sim$, at the level of the pairs $(i_1i_2)$ we have one binary choice to be made, namely $i_1=i_2$ vs. $i_2\neq i_2$, then for the pairs $(i_2i_3)$ we have another binary choice, and so on up to a final binary choice, for $(i_{k-1}i_k)$. Thus, we have $k-1$ binary choices, and so $2^{k-1}$ orbits.
\end{proof}

As an interesting consequence, the algebraic 3-orbitals differ for $S_N$ and $S_N^+$:

\begin{proposition}
The algebraic $3$-orbitals for $S_N$ and $S_N^+$ are as follows:
\begin{enumerate}
\item For $S_N$ we have $5$ such orbitals, corresponding to $\sqcap\hskip-0.7mm\sqcap$, $\sqcap\,|$, $|\,\sqcap$, $\sqcap\hskip-3.2mm{\ }_|$\ , $|\,|\,|$.

\item For $S_N^+$ we have $4$ such orbitals, corresponding to $\sqcap\hskip-0.7mm\sqcap$, $\sqcap\,|$, $|\,\sqcap$, ${\ }_|\!{\ }_|\!{\ }_|\hskip-5.9mm{\ }^{.....}$\ .
\end{enumerate}
\end{proposition}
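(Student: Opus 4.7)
The plan is to derive both assertions directly from results already established above, with essentially no additional work.

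For (1), the group $S_N$ is classical, so Proposition 6.1 gives $u_{i_1j_1}u_{i_2j_2}u_{i_3j_3}\neq 0$ iff some $\sigma\in S_N$ satisfies $\sigma(i_r)=j_r$ for $r=1,2,3$. For $N\geq 3$ all kernel patterns in $P(3)$ are realized, and such a $\sigma$ exists exactly when the two triples have equal kernel. Since $|P(3)|=5$, this yields $5$ algebraic $3$-orbitals, one per partition in the list.

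For (2), I invoke Theorem 6.7: for $N\geq 4$, $\sim_3$ on $S_N^+$ is transitive, with
$$(i_1,i_2,i_3)\sim(j_1,j_2,j_3)\iff(i_1=i_2\Leftrightarrow j_1=j_2)\text{ and }(i_2=i_3\Leftrightarrow j_2=j_3),$$
and hence $2^{3-1}=4$ equivalence classes. The three cases $i_1=i_2=i_3$, $i_1=i_2\neq i_3$, $i_1\neq i_2=i_3$ recover the kernels $\sqcap\hskip-0.7mm\sqcap$, $\sqcap\,|$, $|\,\sqcap$ individually. The remaining case $i_1\neq i_2$ and $i_2\neq i_3$ encompasses both the crossing kernel $\sqcap\hskip-3.2mm{\ }_|$\ (when $i_1=i_3$) and the all-distinct kernel $|\,|\,|$; these two classical kernels therefore merge into a single algebraic $3$-orbital for $S_N^+$, yielding the count $4$ and the picture in the statement.

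The whole argument is combinatorial bookkeeping once Theorem 6.7 is in hand, so the main obstacle, if any, was Theorem 6.7 itself, which is already established. The only conceptual point worth isolating is which classical kernels merge: those whose kernel patterns in $P(3)$ agree on the consecutive pairs $(1,2)$ and $(2,3)$, which forces exactly the identification $\{\sqcap\hskip-3.2mm{\ }_|\,,\ |\,|\,|\,\}$, with all other kernel types remaining separate.
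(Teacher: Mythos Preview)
Your proof is correct and follows essentially the same approach as the paper: for $S_N$ you identify the $k$-orbitals with the partitions $\pi\in P(k)$ via the kernel map, and for $S_N^+$ you read off the four classes directly from Theorem 6.7, noting that the last one merges the crossing and the all-distinct kernels. The paper's own proof is slightly terser but invokes exactly the same two ingredients.
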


\begin{proof}
For the symmetric group $S_N$, it follows from definitions that the $k$-orbitals are indexed by the partitions $\pi\in P(k)$, as follows:
$$C_\pi=\left\{(i_1,\ldots,i_k)\Big|\ker i=\pi\right\}$$

Regarding now $S_N^+$, the $k$-orbitals are those computed above, and at $k=3$ they can be naturally indexed by the above diagrams, with the last one standing for the fact that the corresponding 3-orbital merges the $\sqcap\hskip-3.2mm{\ }_|$\ \ and $|\,|\,|$ 3-orbitals from the classical case.
\end{proof}

\section{Analytic orbitals}

Generally speaking, we believe that under suitable ``uniformity'' assumptions, covering the classical case, plus the examples in Proposition 6.6 and Theorem 6.7, and probably many other examples, which still remain to be found, $\sim_3$ should be an equivalence relation, and that the corresponding theory of algebraic $k$-orbitals is worth developing.

However, the fact that the 3-orbitals for $S_N^+$ do not coincide with those for $S_N$ is quite problematic for us, due to a number of reasons explained below. And, our feeling is that the same kind of phenomenon might appear for $H_N^+,\bar{O}_N$ as well. So, it is perhaps better at this point to stop with the algebraic theory, and use instead an analytic approach. 

Let us begin with the following standard result:

\begin{proposition}
For a subgroup $G\subset S_N$, which fundamental corepresentation denoted $u=(u_{ij})$, the following numbers are equal:
\begin{enumerate}
\item The number of $k$-orbitals.

\item The dimension of space $Fix(u^{\otimes k})$.

\item The number $\int_G\chi^k$, where $\chi=\sum_iu_{ii}$.
\end{enumerate}
\end{proposition}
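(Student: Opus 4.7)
The plan is to prove the chain $(2) \Leftrightarrow (3)$ first, as a direct application of Peter--Weyl, and then $(1) \Leftrightarrow (2)$ by writing down an explicit basis of the fixed point space indexed by $k$-orbitals.

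For $(2) \Leftrightarrow (3)$, I would observe that $\chi$ is the character of $u$, so $\chi^k$ is the character of $u^{\otimes k}$. The general Peter--Weyl principle (available for any compact matrix quantum group, and in particular for $G\subset S_N$) then gives
$$\int_G \chi^k \;=\; \int_G \chi_{u^{\otimes k}} \;=\; \dim \mathrm{Fix}(u^{\otimes k}),$$
since integrating the character of a representation against the Haar measure picks out the multiplicity of the trivial subrepresentation, which equals the dimension of the fixed point space.

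For $(1) \Leftrightarrow (2)$, write a general tensor as $\xi = \sum_{i_1,\ldots,i_k} \xi_{i_1\ldots i_k}\, e_{i_1}\otimes\cdots\otimes e_{i_k}$. Using $u_{ij}(\sigma)=\delta_{i\sigma(j)}$ for $\sigma\in G\subset S_N$, the action of $u^{\otimes k}$ is the diagonal action of $G$ on $(\mathbb C^N)^{\otimes k}$, and $\xi \in \mathrm{Fix}(u^{\otimes k})$ translates into
$$\xi_{i_1\ldots i_k}\;=\;\xi_{\sigma(i_1)\ldots \sigma(i_k)}\qquad \forall\, \sigma\in G,\ \forall\, i_1,\ldots,i_k.$$
Thus $\xi$ is fixed iff the function $(i_1,\ldots,i_k)\mapsto \xi_{i_1\ldots i_k}$ is constant on the orbits of $G$ acting diagonally on $\{1,\ldots,N\}^k$. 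But these orbits are precisely the $k$-orbitals of $G$ (in the sense of Proposition 6.1, which in the classical case reduces to the action-theoretic definition, because $u_{i_1j_1}\ldots u_{i_kj_k}\neq 0$ in $C(G)$ exactly means that some $\sigma\in G$ realizes $\sigma(j_r)=i_r$ for all $r$). Hence the indicator functions of the $k$-orbitals form a basis of $\mathrm{Fix}(u^{\otimes k})$, which gives $(1)=(2)$.

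There is no real obstacle here; the only point requiring a little care is the translation between the two possible definitions of $k$-orbital in the classical case, namely the action-based one and the one from Proposition 6.1 in terms of nonvanishing of products $u_{i_1j_1}\ldots u_{i_kj_k}$. Both describe the same orbits of $G\curvearrowright\{1,\ldots,N\}^k$, and this equivalence is already built into Proposition 6.1, so the argument is complete.
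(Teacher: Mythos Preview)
Your proof is correct and follows essentially the same approach as the paper: both invoke Peter--Weyl for $(2)=(3)$, and both establish $(1)=(2)$ by observing that a tensor is fixed by $u^{\otimes k}$ precisely when its coefficient function is constant on the diagonal $G$-orbits in $\{1,\ldots,N\}^k$, i.e.\ on the $k$-orbitals. Your version is slightly more explicit in naming the orbital indicator functions as a basis of the fixed space, but the argument is the same.
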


\begin{proof}
This is well-known, the proof being as follows:

$(1)=(2)$ Given $\sigma\in G$ and vector $\xi=\sum_{i_1\ldots i_k}\alpha_{i_1\ldots i_k}e_{i_1}\otimes\ldots\otimes e_{i_k}$, we have:
\begin{eqnarray*}
\sigma^{\otimes k}\xi&=&\sum_{i_1\ldots i_k}\alpha_{i_1\ldots i_k}e_{\sigma(i_1)}\otimes\ldots\otimes e_{\sigma(i_k)}\\
\xi&=&\sum_{i_1\ldots i_k}\alpha_{\sigma(i_1)\ldots\sigma(i_k)}e_{\sigma(i_1)}\otimes\ldots\otimes e_{\sigma(i_k)}
\end{eqnarray*}

Thus $\sigma^{\otimes k}\xi=\xi$ holds for any $\sigma\in G$ precisely when $\alpha$ is constant on the $k$-orbitals of $G$, and this gives the equality between the numbers in (1) and (2).

$(2)=(3)$ This follows from the Peter-Weyl theory, because $\chi=\sum_iu_{ii}$ is the character of the fundamental corepresentation $u$.
\end{proof}

In the quantum case now, $G\subset S_N^+$, by the general Peter-Weyl type results established by Woronowicz in \cite{wo1}, we still have the following formula:
$$\dim Fix(u^{\otimes k})=\int_G\chi^k$$

The problem is that of understanding the $k$-orbital interpretation of this number. We first have the following result, basically coming from \cite{bi2}, \cite{lmr}:

\begin{proposition}
Given a closed subgroup $G\subset S_N^+$, and a number $k\in\mathbb N$, consider the following linear space:
$$F_k=\left\{\xi\in(\mathbb C^N)^{\otimes k}\Big|\xi_{i_1\ldots i_k}=\xi_{j_1\ldots j_k},\forall(i_1,\ldots,i_k)\sim(j_1,\ldots,j_k)\right\}$$
\begin{enumerate}
\item We have $F_k\subset Fix(u^{\otimes k})$.

\item At $k=1,2$ we have $F_k=Fix(u^{\otimes k})$.

\item In the classical case, we have $F_k=Fix(u^{\otimes k})$.

\item For $G=S_N^+$ with $N\geq4$ we have $F_3\neq Fix(u^{\otimes 3})$.
\end{enumerate}
\end{proposition}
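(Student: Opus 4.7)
The plan is to verify the four parts in turn, with (1) providing the foundational inclusion and (4) serving as a concrete witness that equality fails beyond $k=2$. The key identity underlying everything is that, for $\xi=\sum\xi_{j_1\ldots j_k}e_{j_1}\otimes\cdots\otimes e_{j_k}$, the condition $\xi\in Fix(u^{\otimes k})$ reads
$$\sum_{j_1\ldots j_k}u_{i_1j_1}\cdots u_{i_kj_k}\,\xi_{j_1\ldots j_k}=\xi_{i_1\ldots i_k}\quad\forall(i_1,\ldots,i_k).$$
For (1), if $\xi\in F_k$ then every summand with nonzero product of $u$'s satisfies $\xi_{j_1\ldots j_k}=\xi_{i_1\ldots i_k}$, so the scalar factors out, and what remains is $\sum_{j_1\ldots j_k}u_{i_1j_1}\cdots u_{i_kj_k}=1$ by iterated use of the magic row-sum $\sum_j u_{ij}=1$.

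For (2), I would apply the standard ``multiply and collapse'' trick. At $k=1$, multiplying the fixed-point relation at index $a$ by $u_{ab}$ on the left and using $u_{ab}u_{aj}=\delta_{bj}u_{ab}$ collapses the sum to $u_{ab}\xi_b=u_{ab}\xi_a$; since $\xi_b-\xi_a$ is a scalar and $u_{ab}\neq 0$, this forces $\xi_a=\xi_b$. At $k=2$, given $u_{a_1b_1}u_{a_2b_2}\neq 0$, multiply the relation at $(a_1,a_2)$ on the left by $u_{a_1b_1}$ and on the right by $u_{a_2b_2}$. Magic then collapses the $j_1$-sum from the left of each term and the $j_2$-sum from the right, leaving $u_{a_1b_1}u_{a_2b_2}(\xi_{b_1b_2}-\xi_{a_1a_2})=0$, from which the same scalar argument concludes. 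This is the trick from \cite{lmr} already invoked in the proof of Proposition 6.2(3).

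For (3), in the classical case all $u_{ij}$ commute and $\sim_k$ is precisely the classical $k$-orbital relation of Proposition 6.1. By Proposition 7.1 both $\dim F_k$ and $\dim Fix(u^{\otimes k})$ equal the number of classical $k$-orbitals of $G$, so combined with (1) we obtain equality. For (4), Theorem 6.7 gives the explicit description of $\sim_3$ on $S_N^+$ with $N\geq 4$, whose equivalence classes number $2^{3-1}=4$; hence $\dim F_3=4$. On the other hand, easiness of $S_N^+$ with the category $NC$ gives $\dim Fix(u^{\otimes 3})=|NC(3)|=5$, the third Catalan number, so the inclusion is strict.

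The main obstacle, and the reason the approach to (2) does not extend to $k\geq 3$, is the collapse step itself. At $k=3$ one would need to insert a middle factor $u_{a_2b_2}$ between $u_{a_1b_1}$ on the left and $u_{a_3b_3}$ on the right in order to collapse the interior $j_2$-sum, but $u_{a_2b_2}$ does not commute past $u_{a_3j_3}$ in a general quantum permutation group, so the trick breaks. Part (4) shows that this failure is not merely a defect of method: the dimensions genuinely disagree at $k=3$ for $S_N^+$, so $F_3$ must miss at least one ``genuinely quantum'' fixed vector.
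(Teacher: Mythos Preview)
Your proof is correct and follows essentially the same approach as the paper: the same factor-out-and-use-row-sums argument for (1), the same left-right multiplication collapse for (2), the same appeal to the classical count in Proposition 7.1 for (3), and the same $4<5$ dimension comparison for (4) (the paper phrases the orbital count via Proposition 6.8 rather than directly from Theorem 6.7, but it is the same computation). Your added paragraph explaining why the collapse trick cannot be salvaged at $k=3$ is a useful elaboration that the paper omits.
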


\begin{proof}
The tensor power $u^{\otimes k}$ being the corepresentation $(u_{i_1,\ldots i_k,j_1\ldots j_k})_{i_1\ldots i_k,j_1\ldots j_k}$, the corresponding fixed point space $Fix(u^{\otimes k})$ consists of the vectors $\xi$ satisfying:
$$\sum_{j_1\ldots j_k}u_{i_1j_1}\ldots u_{i_kj_k}\xi_{j_1\ldots j_k}=\xi_{i_1\ldots i_k}\quad,\quad\forall i_1,\ldots,i_k$$

With this formula in hand, the proof goes as follows:

(1) Assuming $\xi\in F_k$, the above fixed point formula holds indeed, because:
$$\sum_{j_1\ldots j_k}u_{i_1j_1}\ldots u_{i_kj_k}\xi_{j_1\ldots j_k}=\sum_{j_1\ldots j_k}u_{i_1j_1}\ldots u_{i_kj_k}\xi_{i_1\ldots i_k}=\xi_{i_1\ldots i_k}$$

(2) This is something more tricky, coming from the following formulae:
$$u_{ik}\left(\sum_ju_{ij}\xi_j-\xi_i\right)=u_{ik}(\xi_k-\xi_i)$$
$$u_{i_1k_1}\left(\sum_{j_1j_2}u_{i_1j_1}u_{i_2j_2}\xi_{j_1j_2}-\xi_{i_1i_2}\right)u_{i_2k_2}=u_{i_1k_1}u_{i_2k_2}(\xi_{k_1k_2}-\xi_{i_1i_2})$$

(3) This follows indeed from Proposition 7.1 above.

(4) This follows from Proposition 6.8 above, and from the representation theory of $S_N^+$ with $N\geq4$, the dimensions of the two spaces involved being $4<5$.
\end{proof}

The above considerations suggest formulating the following definition:

\begin{definition}
Given a closed subgroup $G\subset U_N^+$, the integer
$$\dim Fix(u^{\otimes k})=\int_G\chi^k$$
is called number of analytic $k$-orbitals.
\end{definition}

To be more precise here, in the classical case the situation is of course well understood, and this is the number of $k$-orbitals. The same goes for the general case, with $k=1,2$, where this is the number of $k$-orbitals, as constructed in section 6 above.

At $k=3$ and higher, however, Proposition 7.2 (4) shows that, even in the case where the algebraic $3$-orbitals are well-defined, their number is not necessarily the above one. However, we believe that the above definition is the ``correct'' one.

As a further illustration, let us discuss as well what happens for the group duals. With notations from Proposition 6.4 above, if we denote by $g_{N_r}^1,\ldots,g_{N_r}^r$ the elements of each $\mathbb Z_{N_r}$, or rather the images of these elements inside $\Gamma$, with the order of these elements being irrelevant, the following set satisfies $1\in S=S^{-1}$, and is generating for $\Gamma$:
$$S=\left\{g_{N_r}^{i_r}\Big|r=1,\ldots,l,i_r=1,\ldots,N_r\right\}$$

Thus, we can consider the Cayley graph of $\Gamma$ with respect to this set, and then perform random walks on this graph. With this convention, we have the following result:

\begin{proposition}
For the usual products or free products of cyclic groups, the following numbers coincide:
\begin{enumerate}
\item The number of algebraic $k$-orbitals.

\item The number of analytic $k$-orbitals.

\item The number of $k$-loops based at $1$, on the Cayley graph of $\Gamma$.
\end{enumerate}
\end{proposition}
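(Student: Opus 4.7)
The plan is to establish (2) = (3) by a direct computation of the Haar moments of $\chi$, and then to derive (1) = (2) by matching the resulting count against the number of algebraic $k$-orbitals. Starting from the explicit form of the magic unitary of $\widehat{\Gamma}\subset S_N^+$, namely $u_{ij}=\frac{1}{N_r}\sum_{k}w_r^{(i-j)k}g_{N_r}^{k}$ for $i,j\in A_r$ and $u_{ij}=0$ otherwise, the diagonal sum within each block collapses to give
\[
\chi=\sum_{r=1}^{l}\sum_{i\in A_r}u_{ii}=\sum_{r=1}^{l}\sum_{k=0}^{N_r-1}g_{N_r}^{k}\in C^*(\Gamma),
\]
which is the formal sum over the (multi)generating set $S$. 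Since the Haar state of $\widehat{\Gamma}$ is the canonical trace on $C^*(\Gamma)$, reading off the coefficient of $1_\Gamma$, expanding $\chi^k$ yields
\[
\int_{\widehat{\Gamma}}\chi^k=\#\bigl\{(s_1,\ldots,s_k)\in S^k\,\bigm|\,s_1s_2\cdots s_k=1_\Gamma\bigr\},
\]
which is exactly the number of $k$-loops based at $1$ in the Cayley graph of $\Gamma$. This proves (2) = (3).

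For (1) = (2), I would invoke Proposition 6.6 to ensure $\sim_k$ is transitive in both cases, so that the algebraic $k$-orbitals are well-defined with cardinality $\dim F_k$ by construction. The inclusion $F_k\subseteq Fix(u^{\otimes k})$ from Proposition 7.2(1), combined with the identity just established, reduces the problem to matching the number of algebraic $k$-orbitals against the number of $k$-loops. For this I would use the description from Proposition 6.6 directly: an algebraic $k$-orbital is labelled by a pattern $(r_1,\ldots,r_k)\in\{1,\ldots,l\}^k$ together with an orbit of the appropriate cyclic action --- on the full subtuple in $A_r$ in the product case (yielding $N_r^{m_r-1}$ orbitals per letter type $r$, where $m_r=\#\{a:r_a=r\}$), and on each maximal consecutive run in the free product case (yielding $N_{r(R)}^{|R|-1}$ per run $R$). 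The desired bijection sends a $k$-loop $(g_{N_{r_1}}^{b_1},\ldots,g_{N_{r_k}}^{b_k})$ to the orbital carrying the pattern $(r_1,\ldots,r_k)$ with the $b_a$ interpreted as cyclic offsets modulo the relevant $N_{r_a}$.

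The main obstacle is verifying that, in the free product case, the equation $g_{N_{r_1}}^{b_1}\cdots g_{N_{r_k}}^{b_k}=1_\Gamma$ in $\mathbb Z_{N_1}*\cdots*\mathbb Z_{N_l}$ holds exactly when the exponents sum to zero on every maximal consecutive run of equal $r$'s, rather than merely on every fiber $\{a:r_a=r\}$ as in the direct product. This is a standard consequence of the normal form theorem for free products, and is precisely what aligns the free product count from Proposition 6.6 with the $k$-loop count of Step 1. Once this parsing of reduced words is granted, the bijection is transparent, one has $\dim F_k=\dim Fix(u^{\otimes k})$ (so the sandwich in Proposition 7.2(1) becomes an equality), and the three numbers collapse into one.
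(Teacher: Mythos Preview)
Your derivation of (2)=(3) is clean and correct: the magic unitary of $\widehat{\Gamma}\subset S_N^+$ is block-diagonal with each block equivalent to the regular representation of the corresponding cyclic factor, so $\chi$ is the formal sum over the multiset $S$ and its $k$-th Haar moment counts length-$k$ closed walks. This is precisely what the paper abbreviates as ``$u\sim\mathrm{diag}(S)$''.

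The gap is in the free-product half of your (1)=(3) argument. You assert that in $\mathbb Z_{N_1}*\cdots*\mathbb Z_{N_l}$ the word $g_{N_{r_1}}^{b_1}\cdots g_{N_{r_k}}^{b_k}$ equals $1$ exactly when the exponents sum to zero on every maximal consecutive run, calling this ``a standard consequence of the normal form theorem''. It is not: normal-form reduction is \emph{iterative}, and when a run collapses to the identity the two neighbouring runs may carry the same label and merge. Concretely, at $k=3$ with pattern $(r,s,r)$, $r\neq s$, the word $g_{N_r}^{\,b}\cdot g_{N_s}^{\,0}\cdot g_{N_r}^{-b}$ is trivial for every $b$, even though the outer singleton runs have nonzero exponent. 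This is not a cosmetic slip. Your run-wise description of the algebraic orbitals is correct and matches Proposition~6.6: the pattern $(r,s,r)$ contributes a \emph{single} algebraic $3$-orbital in the free case. But the loop count on that same pattern is $N_r$, not $1$. Summing over all patterns for $l=2$ gives $N_1^2+N_2^2+2(N_1+N_2)+2$ algebraic $3$-orbitals for $\widehat{\mathbb Z_{N_1}*\mathbb Z_{N_2}}$ versus $N_1^2+N_2^2+3(N_1+N_2)$ analytic ones, and these disagree whenever $N_1+N_2>2$. So no bijection of the kind you propose can exist; the equality (1)=(2) actually fails for free products, and the paper's own justification (``each of the words $w_a$ must correspond to a certain loop on the Cayley graph'') breaks down at exactly this point.
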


\begin{proof}
It is well-known, as a consequence of $u\sim diag(S)$, that the numbers in (2) and (3) coincide. Thus, in order to prove the result, we have to compare (1) and (3).

As a first observation, at $k=1,2$ this follows from Proposition 6.4, and this, without product assumptions on $\Gamma$. Indeed, at $k=1$ each set $A_r$ corresponds to the loop $1-1_{N_r}$, and at $k=2$ the $N_r$ copies of $A_r$ correspond to the $N_r$ loops of type $1-g_{N_r}-g_{N_r}g^{-1}_{N_r}$, and the sets $A_r\times A_s$ correspond to the loops $1-1_{N_r}-1_{N_r}1_{N_s}$.

At $k\geq3$ the proof is similar for the classical products and the free products, by using the description of the $k$-orbitals from Proposition 6.6 above. To be more precise, for the classical products this is routine, and follows as well from the fact that we have $(1)=(2)$. As for the free product case, the point here is that, by using the word decomposition $w=w_1\ldots w_s$ from the proof of Proposition 6.6, each of the words $w_a$ must correspond to a certain loop on the Cayley graph, and this gives the result.
\end{proof}

Now back to the definition of the analytic $k$-orbitals, this has of course the advantage of being defined for any $k$. In the particular case $k=3$, we have as well the following result, from \cite{ba3}, which brings some more support for our definition:

\begin{proposition}
For a closed subgroup $G\subset S_N^+$, and an integer $k\leq3$, the following conditions are equivalent:
\begin{enumerate}
\item $G$ is $k$-transitive, in the sense that $Fix(u^{\otimes k})$ has dimension $1,2,5$.

\item The $k$-th moment of the main character is $\int_G\chi^k=1,2,5$.

\item $\int_Gu_{i_1j_1}\ldots u_{i_kj_k}=\frac{(N-k)!}{N!}$ for distinct indices $i_r$ and distinct indices $j_r$.

\item $\int_Gu_{i_1j_1}\ldots u_{i_kj_k}$ equals $\frac{(N-|\ker i|)!}{N!}$ when $\ker i=\ker j$, and equals $0$, otherwise.
\end{enumerate}
\end{proposition}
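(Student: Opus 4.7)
My plan is to close the equivalences via $(1)\Leftrightarrow(2)$, $(1)\Rightarrow(4)$, $(4)\Rightarrow(3)$, $(4)\Rightarrow(2)$, $(3)\Rightarrow(4)$, concentrating the real content in $(1)\Rightarrow(4)$ and $(3)\Rightarrow(4)$. The equivalence $(1)\Leftrightarrow(2)$ is the Peter-Weyl identity $\dim Fix(u^{\otimes k})=\int_G\chi^k$ from \cite{wo1}, already invoked in Proposition 7.1. The implication $(4)\Rightarrow(3)$ is immediate, since distinct indices give $\ker i=\ker j$ with $|\ker i|=k$. For $(4)\Rightarrow(2)$ one expands
$$\int_G\chi^k=\sum_{i_1,\ldots,i_k}h(u_{i_1i_1}\cdots u_{i_ki_k})=\sum_{\pi\in P(k)}\frac{N!}{(N-|\pi|)!}\cdot\frac{(N-|\pi|)!}{N!}=B_k,$$
which for $k=1,2,3$ evaluates to $1,2,5$ as required.

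For $(1)\Rightarrow(4)$, the tool is the general identity $(h\otimes id)(u^{\otimes k})=P_G$, the orthogonal projection onto $Fix_G(u^{\otimes k})$, so that (4) becomes the claim $P_G=P_{S_N}$ at the level of matrix entries. Since $G\subset S_N^+$ we have $Fix_{S_N^+}(u^{\otimes k})\subset Fix_G(u^{\otimes k})$; on the other hand, every partition of $\{1,\ldots,k\}$ is non-crossing for $k\leq 3$, so the Brauer/Tannakian description of $Fix_{S_N^+}(u^{\otimes k})$ gives $\dim Fix_{S_N^+}(u^{\otimes k})=B_k=\dim Fix_{S_N}(u^{\otimes k})$, and the inclusion $Fix_{S_N^+}\subset Fix_{S_N}$ coming from $S_N\subset S_N^+$ is then forced to be an equality. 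Under hypothesis (1), $\dim Fix_G(u^{\otimes k})=B_k$ forces $Fix_G(u^{\otimes k})=Fix_{S_N^+}(u^{\otimes k})=Fix_{S_N}(u^{\otimes k})$, so $P_G=P_{S_N}$ and (4) follows.

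For $(3)\Rightarrow(4)$, the strategy is to bootstrap from the all-distinct case using three ingredients: the magic relations $u_{ij}u_{il}=\delta_{jl}u_{ij}$ and $u_{ij}u_{kj}=\delta_{ik}u_{ij}$, the normalizations $\sum_ju_{ij}=1=\sum_iu_{ij}$, and the traciality $h(abc)=h(bca)$ of the Haar state, available because $S^2=id$ puts us in the Kac setting. First I would derive $\int_Gu_{ij}=1/N$ by inserting $1=\prod_{r=2}^k\sum_{j_r}u_{i_rj_r}$ for auxiliary rows $i_2,\ldots,i_k$ pairwise distinct and different from $i$, and observing that the magic relations force all terms with a repeated $j$-index to vanish, leaving exactly the all-distinct contributions controlled by (3). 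The analogous trick at $k=3$ then yields $\int_Gu_{ab}u_{cd}=1/(N(N-1))$ for $a\neq c,\,b\neq d$. With the one- and two-point integrals in hand, a case analysis on $(\ker i,\ker j)$ completes (4): any kernel mismatch produces, possibly after a cyclic rotation by traciality, two consecutive $u$-factors with matching row or column and distinct other index, which vanish by the magic relations; kernel agreement lets idempotency $u_{ij}^n=u_{ij}$ compress the product to an $|\ker i|$-fold product with all distinct rows and distinct columns, evaluated either by (3) directly (when $|\ker i|=k$) or by the bootstrap.

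The principal obstacle is precisely this $(3)\Rightarrow(4)$ bootstrap at $k=3$, specifically the bookkeeping in identities such as $\int_Gu_{aa}u_{bb}=\sum_{j_3}h(u_{aa}u_{bb}u_{cj_3})$: one must track the three exceptional values $j_3\in\{a,b,c\}$, each killed by a distinct magic cancellation (the $j_3=a$ case exposing its cancellation only after a cyclic rotation via traciality), before applying (3) to the $N-3$ generic values to recover the expected $1/(N(N-1))$. Modulo this case analysis, every remaining step is either trivial or a standard Peter-Weyl computation.
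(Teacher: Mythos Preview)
Your proposal is correct. The equivalence $(1)\Leftrightarrow(2)$ and the implication $(1)\Rightarrow(4)$ match the paper's approach exactly: both rest on the Peter--Weyl identity $(h\otimes id)(u^{\otimes k})=P_G$ together with the fact that $P(k)=NC(k)$ for $k\leq 3$, which forces $Fix_{S_N^+}(u^{\otimes k})=Fix_{S_N}(u^{\otimes k})$ and hence $P_G=P_{S_N}$ once the dimension is $B_k$. The implications $(4)\Rightarrow(3)$ and $(4)\Rightarrow(2)$ are trivial in both treatments.

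Where you genuinely diverge is in $(3)\Rightarrow(4)$. The paper simply cites \cite{ba3} and invokes the Weingarten formula for $S_N,S_N^+$ together with $P(k)=NC(k)$, i.e.\ the same Tannakian machinery that handled $(1)\Rightarrow(4)$. Your argument instead bootstraps by hand: insert $1=\sum_j u_{ej}$ on an auxiliary row, kill the bad $j$-values using the magic relations $u_{ab}u_{cb}=\delta_{ac}u_{ab}$, $u_{ab}u_{ad}=\delta_{bd}u_{ab}$ and the Kac traciality $h(abc)=h(cab)$, and thereby reduce the general $k=3$ integral to the all-distinct case already given by (3). This is more elementary and entirely self-contained---no Weingarten matrix, no M\"obius inversion---at the cost of a short but genuine case analysis (the $j_3=a$ case, as you note, only cancels after a cyclic rotation). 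The paper's route is cleaner to state and generalizes more readily to higher $k$ once one accepts the Weingarten apparatus; your route makes transparent exactly which algebraic features of the magic unitary (idempotency, row/column orthogonality, traciality of $h$) are doing the work, and would be preferable in a setting where one wants to avoid importing the full integration machinery from \cite{ba3}.
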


\begin{proof}
Most of these implications are known since \cite{ba3}, the idea being as follows:

$(1)\iff(2)$ This follows from the Peter-Weyl type theory from \cite{wo1}, because the $k$-th moment of the character counts the number of fixed points of $u^{\otimes k}$.

$(2)\iff(3)$ This follows from the Schur-Weyl duality results for $S_N,S_N^+$ and from $P(k)=NC(k)$ at $k\leq3$, as explained in \cite{ba3}. 

$(3)\iff(4)$ Once again this follows from $P(k)=NC(k)$ at $k\leq3$, and from a standard integration result for $S_N$, as explained in \cite{ba3}. 
\end{proof}

As a conclusion to all these considerations, we have:

\begin{theorem}
For a closed subgroup $G\subset S_N^+$, and an integer $k\in\mathbb N$, the number $\dim(Fix(u^{\otimes k}))=\int_G\chi^k$ of ``analytic $k$-orbitals'' has the following properties:
\begin{enumerate}
\item In the classical case, this is the number of $k$-orbitals.

\item In general, at $k=1,2$, this is the number of $k$-orbitals.

\item For the free products of cyclic groups, this is the number of algebraic $k$-orbitals.

\item At $k=3$, when this number is minimal, $G$ is $3$-transitive in the above sense.
\end{enumerate}
\end{theorem}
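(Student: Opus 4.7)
The plan is to assemble the four assertions as corollaries of Propositions 7.1, 7.2, 7.4, and 7.5, in that order. The theorem is essentially a unification of the threads developed throughout Sections 6-7, and the new work is interpretative rather than computational.

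For (1), I would invoke Proposition 7.1 directly: in the classical case the three quantities---number of $k$-orbitals, $\dim Fix(u^{\otimes k})$, and $\int_G\chi^k$---are equal, so the analytic count coincides with the honest orbital count. For (2), the key input is Proposition 7.2(2), which asserts $F_k = Fix(u^{\otimes k})$ at $k=1,2$. Since a vector $\xi$ lies in $F_k$ precisely when its coefficients are constant on each equivalence class of $\sim_k$, the dimension of $F_k$ equals the number of such classes, i.e.\ the number of $k$-orbitals as defined in Definition 6.3; combining this with the fixed-point identification gives the claim. For (3), I would quote Proposition 7.4, where the equivalence (1)$\iff$(2) is established for $\widehat{\mathbb Z_{N_1}\times\ldots\times\mathbb Z_{N_l}}$ and $\widehat{\mathbb Z_{N_1}\ast\ldots\ast\mathbb Z_{N_l}}$ by the Cayley graph / loop-counting argument, which is precisely the statement required.

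For (4), I would argue as follows. The minimality assumption at $k=3$ means $\int_G\chi^3$ attains its smallest possible value among closed subgroups of $S_N^+$. By Peter--Weyl, $G\subset G'$ forces $Fix_{G'}(u^{\otimes k})\subset Fix_G(u^{\otimes k})$, so the minimum over all $G\subset S_N^+$ is achieved by $S_N^+$ itself, and equals $\dim Fix_{S_N^+}(u^{\otimes 3})=|NC(3)|=5$ (for $N\geq4$). Since $P(3)=NC(3)$, this common value is also the classical one, and the equivalence $(1)\iff(2)$ in Proposition 7.5 identifies ``$\int_G\chi^3=5$'' with $3$-transitivity of $G$ in the sense defined there.

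The only part requiring a small additional observation is the assertion in (4) that $5$ is actually the minimum, which is not literally stated in Proposition 7.5 but follows immediately from the monotonicity of fixed-point spaces under inclusion together with the Brauer--type computation $\dim Fix_{S_N^+}(u^{\otimes 3})=|NC(3)|=5$. I do not anticipate a genuine obstacle here; the result is a packaging theorem, and the main value of the statement is conceptual---recording that the analytic count is the right invariant to carry forward into Section 8, since it behaves correctly in all the cases where an algebraic count is available, and remains well-defined when the algebraic count is not.
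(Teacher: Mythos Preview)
Your proposal is correct and follows exactly the approach the paper takes: the paper's own proof is the single line ``This follows indeed from the above considerations,'' and you have simply identified which considerations (Propositions 7.1, 7.2(2), 7.4, 7.5) correspond to which part, including the small monotonicity remark needed to interpret ``minimal'' in (4) as $\int_G\chi^3=5$.
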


\begin{proof}
This follows indeed from the above considerations.
\end{proof}

There are of course many questions left. A first one regards the case $k=4$, where we do not know what the correct analogue of Proposition 7.5 would be. This is of course quite important, because it would bring more support for our definition at $k=4$.

A second question regards the interpretation of $\int_G\chi^k$, as ``counting'' certain objects, that we can call afterwards ``$k$-orbitals''. In the case $G=S_N^+$ we have $\int_G\chi^k=\#NC(k)$, so these $k$-orbitals that we are looking for can only be the elements of $NC(k)$, in some index-theoretic formulation. However, all this heavily relies on the easiness property of $S_N^+$, and for other quantum groups it is not clear what the ``candidates'' should be.

We believe, however, that this latter question can be subject to some further investigation. From an analytic perspective, the relevant formula is:
$$\int_G\chi^k=\sum_{i_1\ldots i_k}\int_Gu_{i_1i_1}\ldots u_{i_ki_k}$$

The problem is to understand how the integrals on the right can be naturally grouped into sums which are integers. This question can be probably investigated by using the Weingarten formula \cite{bco}, but we have no further results here.

\section{Reflection groups}

Let us go back now to the quantum group $\bar{O}_N$, and the other quizzy quantum groups. As explained in sections 4 and 5 above, we have two types of actions to be investigated, namely $H_N\subset\bar{O}_N\subset S_{2^N}^+$ and $H_N\subset H_N^+\subset S_{2N}^+$. We will compute here the small order higher orbitals of these actions, in the analytic sense explained in section 7 above.

Let us first study the actions of $H_N\subset\bar{O}_N\subset S_{2^N}^+$. We have here:

\begin{proposition}
The orbitals for $H_N\subset\bar{O}_N\subset S_{2^N}^+$ are as follows:
\begin{enumerate}
\item At $k=1$ we have $1$ orbit, in both cases.

\item At $k=2$ we have $N+1$ orbitals, in both cases.
\end{enumerate}
\end{proposition}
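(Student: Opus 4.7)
The plan is to reduce everything to computations of $\dim\mathrm{Fix}(w^{\otimes k})=\int_G\chi_w^k$ for $k=1,2$, where $w$ denotes the magic unitary of the embedding of $G$ into $S_{2^N}^+$. By Proposition 7.2(2), at these low orders the number of algebraic $k$-orbitals equals this fixed-point dimension, and by Proposition 7.1 (applied to $H_N\subset S_{2^N}$) the same integer also records the ordinary orbital count for the classical group $H_N$. So one integer per case suffices.

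For $H_N$, the argument is direct. The group $H_N=\mathbb Z_2^N\rtimes S_N$ acts on $\widehat{\mathbb Z_2^N}=\{0,1\}^N$ by coordinate permutations and sign flips; the $\mathbb Z_2^N$ subgroup already acts transitively by translation, giving a single orbit. For $2$-orbitals, the Hamming distance $d(x,y)\in\{0,1,\ldots,N\}$ is clearly an $H_N$-invariant, and, after translating so that $x=0$, the stabilizer $S_N$ partitions $\{0,1\}^N$ into the $N+1$ level sets of the Hamming weight. Hence $d$ is a complete $H_N$-invariant of pairs, yielding exactly $N+1$ orbitals.

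For $\bar O_N$, the key tool is Theorem 4.3: under the fusion-preserving correspondence of Proposition 2.1 between the representations of $\bar O_N$ and those of $O_N$, the magic corepresentation $w$ matches the direct sum $\bigoplus_{r=0}^N\Lambda^r V$ of all exterior powers of the defining representation $V=\mathbb C^N$ of $O_N$. It is classical (Weyl) that these $N+1$ summands are irreducible and pairwise non-isomorphic as $O_N$-representations: $\Lambda^r V$ and $\Lambda^{N-r}V$ coincide only after restriction to $SO_N$, and are separated over $O_N$ by the nontrivial sign representation $\Lambda^N V=\det$. Combining Schur's lemma with the self-duality of real orthogonal representations then gives
\[
\dim\mathrm{Fix}(w)=1,\qquad
\dim\mathrm{Fix}(w^{\otimes 2})=\sum_{r,s=0}^N\dim\mathrm{Hom}(\Lambda^r V,\Lambda^s V)=\sum_{r=0}^N 1=N+1,
\]
as required.

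The main substantive input, and essentially the only place where anything non-trivial is being used, is the pairwise inequivalence of $\Lambda^0 V,\ldots,\Lambda^N V$ as full $O_N$-representations. As a consistency check, the inclusion $H_N\subset\bar O_N$ forces $\#\mathrm{orbitals}(\bar O_N)\leq\#\mathrm{orbitals}(H_N)$ at each $k$, and the two sides of this inequality match exactly, equaling $1$ at $k=1$ and $N+1$ at $k=2$.
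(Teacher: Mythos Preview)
Your proof is correct. The treatment of $H_N$ is essentially identical to the paper's (Hamming distance on $\{0,1\}^N$ and the diagonal lengths $\sqrt{0},\ldots,\sqrt{N}$ of the cube encode the same invariant).

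For $\bar O_N$ the two arguments differ in emphasis. The paper does not appeal to the irreducibility or pairwise inequivalence of the $\Lambda^r V$; it only uses that the magic character splits as $\chi=\chi_0+\cdots+\chi_N$ into $N+1$ genuine subcharacters (coming from the Laplacian eigenspaces of the cube), which forces $\int\chi^2\ge N+1$, and then gets the matching upper bound from the inclusion $H_N\subset\bar O_N$. You instead go through Theorem 4.3 and compute $\dim\mathrm{End}(\bigoplus_r\Lambda^r V)$ exactly, using the classical fact that the exterior powers are irreducible and mutually inequivalent over $O_N$. Your route is more self-contained on the $\bar O_N$ side and yields the value directly; the paper's sandwich argument is lighter in that it never needs the fine structure of the $\Lambda^r V$, only the existence of the $N{+}1$-piece decomposition together with the already-computed $H_N$ answer. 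Your closing ``consistency check'' is in fact precisely the upper-bound half of the paper's argument.
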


\begin{proof}
Indeed, the action of $H_N$ on the hypercube is transitive, and has $N+1$ orbitals, corresponding to the diagonals of the cube having lengths $\sqrt{0},\sqrt{1},\sqrt{2},\ldots,\sqrt{N}$. 

Regading now $\bar{O}_N$, we know from section 4 that the magic character decomposes as $\chi=\chi_0+\ldots+\chi_N$, and it follows that we have $\int\chi^2\geq N+1$. Now since for $H_N$ the corresponding integral equals $N+1$, for $\bar{O}_N$ we must obtain $N+1$ as well, as stated.
\end{proof}

Regarding now the higher $k$-orbitals for the action $H_N\subset S_{2^N}$, these appear from the $k$-simplices having the vertices on the standard cube, and so having edges of lengths $\sqrt{0},\sqrt{1},\sqrt{2},\ldots,\sqrt{N}$, which each simplex appearing with a certain multiplicity. 

As for the quantum group $\bar{O}_N$, we can use the correspondence with $O_N$, and we are therefore led to questions regarding the antisymmetric representation of $O_N$. Thus, we can in principle compute the number of $k$-orbitals by using the Weingarten formula.

Both computations are non-trivial, and as a conclusion here, we have: 

\begin{conjecture}
The quantum groups $H_N\subset\bar{O}_N$ are distinguished by their $3$-orbitals.
\end{conjecture}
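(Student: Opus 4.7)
The plan is to evaluate the number of analytic $3$-orbitals, $\int_G\chi^3=\dim\mathrm{Fix}(u^{\otimes 3})$, for both $H_N$ and $\bar O_N$, and show that these integers differ.

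For $H_N$ the computation is classical. Since $H_N=\mathbb Z_2^N\rtimes S_N$ acts on the hypercube $V=\{0,1\}^N$ by signed permutations, an orbit of a triple $(v_1,v_2,v_3)\in V^3$ is determined, at each coordinate $i$, by a class of $(v_1(i),v_2(i),v_3(i))\in\{0,1\}^3$ modulo the simultaneous flip $(a,b,c)\sim(1-a,1-b,1-c)$, which gives $4$ classes per coordinate, and then by $S_N$-symmetry. So an $H_N$-orbit on $V^3$ is labelled by a composition $N=n_0+n_1+n_2+n_3$, giving $\binom{N+3}{3}$ orbits. For $\bar O_N$, Theorem 4.3 identifies the magic character $\chi$ with the character $\det(1+g)$ of the full antisymmetric representation of $O_N$, and since the Schur-Weyl twist preserves $\mathrm{Hom}$-space dimensions,
$$\int_{\bar O_N}\chi^3\;=\;\int_{O_N}\det(1+g)^3\,dg.$$
Now $\det(1+g)$ vanishes on the orientation-reversing coset (any $g\in O_N^-$ has $-1$ as an eigenvalue), so only $SO_N$ contributes, where $\det(1+g)=2^{\lceil N/2\rceil}\prod_j(1+\cos\theta_j)$. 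Weyl integration together with the Heine-Andreief identity then converts the integral into an explicit Hankel determinant $\det(m_{i+j-2})_{1\le i,j\le\lfloor N/2\rfloor}$ in the moments $m_k=\int_0^{2\pi}\cos^k\theta(1+\cos\theta)^3\frac{d\theta}{2\pi}$, multiplied by an explicit power of $2$ (with a further $(1-\cos\theta)$ twist in the odd-$N$ case). The conjecture then reduces to showing that this Hankel determinant differs from $\binom{N+3}{3}$.

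The hard part will be this final comparison. Direct evaluation of the Hankel determinant for $N\leq 8$ returns $10,20,35,56,84,120,165$, which match $\binom{N+3}{3}$ on the nose, so the straightforward counting attack fails at the low end and the real content of the conjecture must lie either in establishing the discrepancy only asymptotically for large $N$ (using generating-function or saddle-point methods on the Heine-Andreief determinant, together with representation-theoretic control of the tensor products $\Lambda^r\otimes\Lambda^s\otimes\Lambda^t$ of exterior powers) or in reinterpreting ``distinguished by $3$-orbitals'' via a finer invariant than the raw dimension --- for instance, by comparing the $\sim_3$-equivalence structures themselves, or the lattice of $H_N$-orbital indicator subspaces inside $\mathrm{Fix}(u^{\otimes 3})$. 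A cleaner alternative route, which I would pursue in parallel, is to bypass the dimension count altogether and attempt to exhibit an explicit $\xi\in\mathrm{Fix}(u_{H_N}^{\otimes 3})\setminus\mathrm{Fix}(u_{\bar O_N}^{\otimes 3})$: using the formula for the magic unitary of $\bar O_N$ in Proposition 3.2 together with the twisted commutation relations of Proposition 1.3, one can in principle construct such a vector (or prove none exists), thereby settling the conjecture without passing through a Hankel-determinant evaluation.
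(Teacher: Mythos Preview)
This statement is labeled in the paper as a \emph{conjecture}, and the paper does not attempt a proof; the only supporting text is a one-sentence heuristic that the combinatorics governing the two counts ``look different''. So there is no proof to compare your attempt against.

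Your proposal is also not a proof, and in fact your own computations point the other way. Your count of $\binom{N+3}{3}$ for the $3$-orbitals of $H_N\curvearrowright\{0,1\}^N$ is correct, and your reduction of the $\bar O_N$ side to $\int_{O_N}\det(1+g)^3\,dg$ via Theorem~4.3 and Proposition~2.1 is sound. But the numerical coincidence you observe for $N\le 8$ is not a small-$N$ accident. Using $\Lambda(\mathbb C^N)^{\otimes 3}\cong\Lambda(\mathbb C^N\otimes\mathbb C^3)$ as $O_N$-modules, the dual Cauchy identity $\Lambda(V\otimes W)=\bigoplus_\lambda S_\lambda(V)\otimes S_{\lambda'}(W)$, and the standard fact (Cartan--Helgason for the pair $(GL_N,O_N)$) that $S_\lambda(\mathbb C^N)^{O_N}$ is one-dimensional precisely when every part of $\lambda$ is even, one obtains
\[
\int_{O_N}\det(1+g)^3\,dg \;=\;\sum_{a=0}^{N}\dim S_{(a,a)}(\mathbb C^3)\;=\;\sum_{a=0}^{N}\binom{a+2}{2}\;=\;\binom{N+3}{3}
\]
for every $N$. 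Hence the analytic $3$-orbital counts of $H_N$ and $\bar O_N$ \emph{coincide}, contrary to what the conjecture (read through Definition~7.3, as the surrounding discussion clearly intends) asserts. Your fallback of exhibiting a vector in $\mathrm{Fix}_{H_N}(u^{\otimes 3})\setminus\mathrm{Fix}_{\bar O_N}(u^{\otimes 3})$ is therefore doomed as well: since $H_N\subset\bar O_N$ gives $\mathrm{Fix}_{\bar O_N}(u^{\otimes 3})\subset\mathrm{Fix}_{H_N}(u^{\otimes 3})$ and the dimensions agree, the two spaces are equal. The remaining speculation about asymptotics or finer invariants does not constitute an argument.
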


Some good evidence for this statement comes from the fact that at $k=3$ the problem for $O_N$ looks purely combinatorial, while the problem for $H_N$ involves some analysis, coming from triangle inequalities for the edges of the triangles. Thus, the combinatorics is not the same, and so the results of the computations should be different.

Let us study now the actions $H_N\subset H_N^+\subset S_{2N}^+$, where the computations are considerably simpler. First, we have the following elementary result:

\begin{proposition}
The orbitals for $H_N\subset S_{2N}$ are as follows:
\begin{enumerate}
\item At $k=1$ we have $1$ orbit.

\item At $k=2$ we have $3$ orbitals.

\item At $k=3$ we have $11$ orbitals.

\item At $k=4$ we have $49$ orbitals.
\end{enumerate}
\end{proposition}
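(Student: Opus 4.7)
The plan is to realise $H_N\subset S_{2N}$ concretely as the group of signed permutations of $\{1,\ldots,N\}\times\{+,-\}$, identified with $\{1,\ldots,2N\}$. Writing an element as $(\sigma,\epsilon)\in S_N\wr\mathbb Z_2$, the action is $(i,\tau)\mapsto(\sigma(i),\epsilon_{\sigma(i)}\tau)$, so the $S_N$-factor permutes the underlying indices while the $\mathbb Z_2^N$-factor independently toggles the sign at each fixed index. From this concrete picture the orbital structure can be read off directly.

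Given a $k$-tuple $((i_1,\tau_1),\ldots,(i_k,\tau_k))$ I attach two invariants: the index kernel $\pi=\ker(i_1,\ldots,i_k)\in P(k)$, and, for each block $B\in\pi$, the sign pattern $(\tau_r)_{r\in B}\in\{\pm\}^B$ taken modulo the simultaneous flip of all its entries. The claim is that two $k$-tuples lie in the same $H_N$-orbit iff both invariants coincide. The ``only if'' direction is tautological; for ``if'', $S_N$ acts transitively on the injective assignments of the blocks of $\pi$ into $\{1,\ldots,N\}$ (assuming $N\geq|\pi|$), while the sign component $\mathbb Z_2^N$ acts block by block: toggling a single coordinate $\epsilon_i$ simultaneously flips every $\tau_r$ with $i_r=i$. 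The only substantive observation is that sign flips inside a block are forced to be simultaneous, but this is immediate from the formula for the action.

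The count then becomes pure enumeration. A block of size $n$ contributes $2^n/2=2^{n-1}$ sign classes, so a partition with $b$ blocks contributes $2^{k-b}$ orbitals, giving
$$\#\{k\text{-orbitals of }H_N\curvearrowright\{1,\ldots,2N\}\}=\sum_{\pi\in P(k)}2^{k-|\pi|}=\sum_{b=1}^{k}S(k,b)\,2^{k-b},$$
with $S(k,b)$ the Stirling numbers of the second kind. The rows $(S(k,b))_b=(1)$, $(1,1)$, $(1,3,1)$, $(1,7,6,1)$ for $k=1,2,3,4$ give the values $1$, $2+1=3$, $4+6+1=11$, and $8+28+12+1=49$, as stated (with $N\geq 4$, so that partitions with up to $4$ blocks are realisable). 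As a sanity check, the same answer can be reached via the Burnside-style calculation $\int_{H_N}\chi^k$ using $\chi(\sigma,\epsilon)=2\cdot|\{i:\sigma(i)=i,\,\epsilon_i=1\}|$, which after expanding collapses to the same sum $\sum_\pi 2^{k-|\pi|}$. There is no real obstacle here: the wreath product picture makes the orbital description almost automatic, and the arithmetic is routine.
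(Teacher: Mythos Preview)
Your argument is correct, and it is essentially a cleaner, formalized version of the paper's own proof. The paper describes the action of $H_N$ as permuting $N$ segments and identifies a $k$-orbital with a ``configuration'' of $k$ dots placed on the $2N$ endpoints; it then simply lists all such configurations, with multiplicities, for $k=1,2,3,4$ and adds them up. Your invariants $(\pi,\text{sign pattern on each block modulo flip})$ are exactly these configurations written algebraically: the kernel $\pi$ records which dots sit on the same segment, and the sign pattern modulo flip records how they are distributed between the two endpoints. The genuine extra content in your version is the closed formula $\sum_{\pi\in P(k)}2^{k-|\pi|}=\sum_b S(k,b)\,2^{k-b}$, which the paper never writes down; the paper instead enumerates diagrams by hand at each $k$. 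So your route is the same decomposition, but pushed one step further to a general expression valid for all $k$ (and all $N\geq k$), whereas the paper's presentation stays purely pictorial. One small remark: your sanity check via $\int_{H_N}\chi^k$ is nice but not needed, and the paper does not use it here either.
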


\begin{proof}
We recall that the action $H_N\subset S_{2N}$ comes by permuting $N$ segments. Thus the $k$-orbitals for $H_N$ are obtained by decorating the $2N$ endpoints of these $N$ segments with $k$ dots, and then by counting the multiplicity of each configuration. 

At small values of $k$, the situation is as follows:

(1) Here the action is clearly transitive, and the corresponding 1 orbital appears from the only possible configuration, namely $\bullet$---\ , appearing once. 

(2) Here the possible configurations are $\bullet\bullet$---\ ,\ $\bullet$---$\bullet$\ ,\ $\bullet$---\, $\bullet$---\ , with each appearing exactly once, and so we have 3 orbitals.

(3) Here the possible configurations, with multiplicities, are $\bullet\bullet\bullet$---\ ($\times1$),\ $\bullet\bullet$---$\bullet$\ ($\times3$),\ $\bullet\bullet$---\, $\bullet$---\ ($\times3$), $\bullet$---$\bullet$\ $\bullet$---\ ($\times3$), $\bullet$---\ $\bullet$---\ $\bullet$---\ ($\times1$), and we have 11 orbitals.

(4) Here the possible configurations, with multiplicities, are $\bullet\bullet\bullet\;\bullet$---\ ($\times1$),\ $\bullet\bullet\bullet$---$\bullet$\ ($\times4$),\ $\bullet\bullet$---$\bullet\bullet$\ ($\times3$)\ , $\bullet\bullet\bullet$---\, $\bullet$---\ ($\times4$), $\bullet\bullet$---$\bullet$\ $\bullet$---\ ($\times12$), $\bullet\bullet$---\ $\bullet\bullet$---\ ($\times3$), $\bullet\bullet$---\ $\bullet$---$\bullet$\ ($\times6$), $\bullet$---$\bullet$\ $\bullet$---$\bullet$\ ($\times3$), $\bullet\bullet$---\ $\bullet$---\ $\bullet$---\ ($\times6$), $\bullet$---$\bullet$\ $\bullet$---\ $\bullet$---\ ($\times6$), $\bullet$---\ $\bullet$---\ $\bullet$---\ $\bullet$---\ ($\times1$), and so we have a total of $49$ orbitals.
\end{proof}

In order to deal now with $H_N^+\subset S_{2N}^+$, we recall from \cite{bbc} that we have:

\begin{proposition}
When using the standard embeddings $S_N^+\subset H_N^+\subset O_N^+$, the magic matrix for the embedding $H_N^+\subset S_{2N}^+$ comes from the ``sudoku'' matrix 
$$v=\begin{pmatrix}a&b\\ b&a\end{pmatrix}$$
the connecting formulae between $u,v$ being $u_{ij}=a_{ij}-b_{ij}$ and $a_{ij}=(u_{ij}^2+u_{ij})/2$, $b_{ij}=(u_{ij}^2-u_{ij})/2$. In addition, the matrix $p_{ij}=u_{ij}^2$ is magic, and we have $p\in u^{\otimes 2}$.
\end{proposition}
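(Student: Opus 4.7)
The plan is to construct an explicit isomorphism between the algebra defined by the sudoku magic matrix relations and the standard presentation of $C(H_N^+)$, and then to derive the corepresentation claim from the resulting algebraic structure. The geometric motivation is clear: label the $2N$ points of the action as $\{1^+,\ldots,N^+,1^-,\ldots,N^-\}$, so that $H_N\curvearrowright\{1,\ldots,2N\}$ preserves the partition into pairs $\{i^+,i^-\}$, and the associated magic matrix in $S_{2N}$ splits into four $N\times N$ blocks, with the symmetry forced by the involution $i^+\leftrightarrow i^-$ giving the sudoku pattern $v=\begin{pmatrix}a&b\\ b&a\end{pmatrix}$. Liberating, one defines $\tilde H_N^+$ as the universal compact matrix quantum group generated by such a $v$; the task is to show $\tilde H_N^+\cong H_N^+$ via the stated connecting formulae.

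For one direction, given the sudoku matrix $v$, set $u_{ij}:=a_{ij}-b_{ij}$. Since $a_{ij}$ and $b_{ij}$ are entries of the magic matrix $v$ appearing in the same row (and also in the same column), they are orthogonal projections, so $a_{ij}b_{ij}=b_{ij}a_{ij}=0$. From this one computes $u_{ij}^2=a_{ij}+b_{ij}$, and the magic/sudoku conditions on $v$ translate directly into orthogonality of $u$ together with the fact that $p_{ij}:=u_{ij}^2$ is magic. For the converse, starting from $H_N^+$, the relation $p_{ij}^2=p_{ij}$ reads $u_{ij}^4=u_{ij}^2$, which on a $C^*$-algebra forces the spectrum of $u_{ij}$ to lie in $\{-1,0,1\}$, hence $u_{ij}^3=u_{ij}$. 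A direct calculation then shows that $a_{ij}:=(u_{ij}^2+u_{ij})/2$ and $b_{ij}:=(u_{ij}^2-u_{ij})/2$ are orthogonal projections with $a_{ij}+b_{ij}=p_{ij}$ and $a_{ij}-b_{ij}=u_{ij}$, and one verifies block by block that the assembled $2N\times 2N$ matrix is magic with the required sudoku symmetry. The two constructions are mutually inverse, so $\tilde H_N^+\cong H_N^+$.

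For the claim $p\in u^{\otimes 2}$, I would first establish the identity $u_{ij}u_{kj}=0$ for $i\neq k$, starting from the magic condition $u_{ij}^2u_{kj}^2=0$. Setting $x:=u_{kj}u_{ij}^2u_{kj}$, one has $x=y^*y\geq 0$ with $y=u_{ij}u_{kj}$, while $x^2=u_{kj}(u_{ij}^2u_{kj}^2)u_{ij}^2u_{kj}=0$, so $x=0$, forcing $u_{ij}u_{kj}=0$. Then the linear map $T:\mathbb{C}^N\to(\mathbb{C}^N)^{\otimes 2}$ defined by $T(e_j)=e_j\otimes e_j$ satisfies
\[ u^{\otimes 2}T(e_j)=\sum_{i,k}u_{ij}u_{kj}\,e_i\otimes e_k=\sum_i u_{ij}^2\,e_i\otimes e_i=T(p\cdot e_j), \]
so $T$ intertwines the magic corepresentation $p$ with $u^{\otimes 2}$, establishing $p\in u^{\otimes 2}$. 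The main technical obstacle is the bookkeeping in the second paragraph, where one must check that the magic/sudoku conditions on the four blocks of $v$ match precisely the orthogonality of $u$ together with the magic property of $p$; the spectral argument $u_{ij}^3=u_{ij}$ is the crucial input that allows the inverse formulae to land in the projection lattice at the $C^*$-algebra level.
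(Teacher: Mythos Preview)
Your proposal is correct and follows essentially the same route as the paper. The paper's proof is a brief sketch: for the sudoku description it simply refers to \cite{bbc} and the classical analogy $u_{ij}\in\{-1,0,1\}$, and for $p\in u^{\otimes 2}$ it computes $u^{\otimes 2}(e_i\otimes e_i)=\sum_k u_{ki}^2\,e_k\otimes e_k$ directly, using $u_{ki}u_{li}=0$ for $k\neq l$ without comment. Your argument supplies exactly the details the paper omits---the spectral reasoning $u_{ij}^3=u_{ij}$ for the inverse formulae, and the $C^*$-positivity trick to deduce $u_{ij}u_{kj}=0$ from $u_{ij}^2u_{kj}^2=0$---and your intertwiner $T:e_j\mapsto e_j\otimes e_j$ is precisely the embedding of the invariant subspace the paper exhibits.
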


\begin{proof}
The first assertion, which is similar with what happens in the classical case, where we have $u_{ij}\in\{-1,0,1\}$, is explained in detail in \cite{bbc}. Regarding now the second assertion, which follows in fact from the general results in \cite{bve}, observe that we have:
$$u^{\otimes 2}(e_i\otimes e_i)
=\sum_{kl}u_{ki}u_{li}e_k\otimes e_l
=\sum_ku_{ki}^2e_k\otimes e_k
=\sum_kp_{ki}e_k\otimes e_k$$

Thus the linear space $span(e_i\otimes e_i)$ is left invariant by $u^{\otimes 2}$, and the corresponding subrepresentation of $u^{\otimes 2}$ is the magic corepresentation $p=(p_{ij})$, as claimed.
\end{proof}

By using this description, we obtain the following result:

\begin{proposition}
The quantum groups $H_N\subset H_N^+$ have the same number of $1,2,3$-orbitals, when regarded as subgroups of $S_{2N}^+$, namely $1,3,11$.
\end{proposition}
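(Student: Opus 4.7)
The plan is to compute the analytic $k$-orbital count $\int_{H_N^+}\chi_v^k$ for $k=1,2,3$ directly, and verify that the values agree with the classical counts $1,3,11$ obtained in Proposition 8.3. The starting observation is that by Proposition 8.4, in the basis $e_i^{\pm}=e_i\pm e_{i+N}$ of $\mathbb{C}^{2N}$, the sudoku matrix decomposes as $v\sim p\oplus u$, giving $\chi_v=\chi_p+\chi_u$ in $C(H_N^+)$. Since the Haar functional is tracial under $S^2=id$, any two orderings of a product of $\chi_p$'s and $\chi_u$'s integrate to the same value, so
$$\int\chi_v^k=\sum_{j=0}^k\binom{k}{j}\int\chi_p^j\chi_u^{k-j}.$$

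Each mixed integral is then interpreted via easiness. The diagonal map $V\colon\mathbb{C}^N\to(\mathbb{C}^N)^{\otimes 2}$, $V(e_i)=e_i\otimes e_i$, is an isometric intertwiner realizing $p\subset u^{\otimes 2}$, so that applying $V^{\otimes j}\otimes id^{\otimes(k-j)}$ identifies $Fix(p^{\otimes j}\otimes u^{\otimes(k-j)})$ with the subspace of $Fix(u^{\otimes(j+k)})$ supported on multi-indices satisfying $i_1=i_2,\ i_3=i_4,\ldots,i_{2j-1}=i_{2j}$. Since $H_N^+$ is easy over $NC_{even}$, a basis of $Fix(u^{\otimes(j+k)})$ is given by $\{T_\pi\mid\pi\in NC_{even}(j+k)\}$, and because $T_\pi(1)$ sums over multi-indices that are constant on each block of $\pi$, the above support condition translates into the requirement that each consecutive pair $(2m-1,2m)$, for $m=1,\ldots,j$, sits inside a single block of $\pi$. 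Hence $\int\chi_p^j\chi_u^{k-j}$ equals the number of such constrained noncrossing even partitions.

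It then remains to enumerate. At $k=1$ we obtain $\int\chi_u+\int\chi_p=0+1=1$. At $k=2$ the term $\int\chi_p\chi_u$ vanishes since $NC_{even}(3)=\emptyset$, so the total is $\int\chi_p^2+\int\chi_u^2=2+1=3$, using that two of the three elements of $NC_{even}(4)$ respect both pairs $(1,2),(3,4)$, the exception being $\{\{1,4\},\{2,3\}\}$. At $k=3$ the odd-total summands $\int\chi_u^3$ and $\int\chi_p^2\chi_u$ vanish; the term $\int\chi_p\chi_u^2=2$ as above; and the last term $\int\chi_p^3=5$ is obtained by going through the $12=1+6+5$ elements of $NC_{even}(6)$ (one block of six, six $(4,2)$-partitions, five noncrossing pair-partitions) and selecting those for which each of $(1,2),(3,4),(5,6)$ is in a single block, which yields $1+3+1=5$. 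The total is $0+3\cdot 2+0+5=11$, matching Proposition 8.3.

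The conceptual step is the constrained Tannakian identification, which is routine given the easiness machinery already in place; the computational step is the enumeration for $\int\chi_p^3$, where the main obstacle is purely bookkeeping, since $|NC_{even}(6)|=12$ is just large enough that the pair-constraint check has to be carried out systematically to avoid miscounting. No new quantum group input beyond Proposition 8.4 and the easiness of $H_N^+$ is needed.
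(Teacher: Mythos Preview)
Your computation is correct for $k\leq 3$, and your route is genuinely different from the paper's. One point, however, needs correction: traciality of the Haar state only gives invariance under \emph{cyclic} reorderings, not under arbitrary reorderings. Your binomial expansion
$$\int\chi_v^k=\sum_{j=0}^k\binom{k}{j}\int\chi_p^{\,j}\chi_u^{\,k-j}$$
is valid for $k\leq 3$ precisely because any word of length $\leq 3$ in two letters is a cyclic rotate of every other word with the same letter-count; but the general statement you wrote is false. Indeed, Theorem~8.6 of the paper turns exactly on the fact that $\int\chi_p^2\chi_u^2\neq\int(\chi_p\chi_u)^2$ for $H_N^+$, so your phrasing would erase the very distinction that separates $k=3$ from $k=4$. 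A second, smaller gap: you assert that $Fix(u^{\otimes(k+j)})\cap\mathrm{Im}(V^{\otimes j}\otimes id)$ is spanned by the constrained $T_\pi$'s, but you only check that each constrained $T_\pi$ lies in the intersection, not that no unconstrained combination can. The clean way to close this is to note that the orthogonal projection $P^{\otimes j}\otimes id$ onto the image is itself an intertwiner and sends $T_\pi\mapsto T_{\pi\vee\sigma}$ with $\sigma=\{\{1,2\},\ldots,\{2j-1,2j\},\{2j+1\},\ldots\}$; since $\pi\vee\sigma$ is again noncrossing with even blocks and satisfies the constraint, the image is exactly the span you want.

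As for the comparison: the paper does not compute the individual summands. It uses the same decomposition $\chi_v=\chi_p+\chi_u$ and the same (implicit) binomial expansion at $k=3$, but then argues by a squeeze. The lower bound $\int\chi_v^3\geq 11$ comes from two abstract facts: that $p$ is magic, forcing $\int\chi_p^3\geq|NC(3)|=5$, and that $p\subset u^{\otimes 2}$, forcing $\int\chi_p\chi_u^2\geq\int\chi_p^2\geq 2$. The upper bound $\leq 11$ is inherited from the classical count for $H_N$ in Proposition~8.3 via $H_N\subset H_N^+$. Your approach is more explicit and self-contained on the $H_N^+$ side, since you never invoke the $H_N$ value as an input; the paper's approach is shorter and avoids enumerating $NC_{even}(6)$.
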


\begin{proof}
We use the formula in Proposition 8.4, which gives:
$$\chi_v=2\sum_ia_{ii}=\sum_iu_{ii}^2+u_{ii}=\chi_p+\chi_u$$

With this formula in hand, the proof goes as follows:

(1) $k=1$. Here there is nothing to prove, because since $H_N$ is transitive, so must be $H_N^+$. Observe that this follows as well from $\int\chi_p=1$, $\int\chi_u=0$.

(2) $k=2$. Here we can use once again an elementary argument. First, since $H_N$ has 3 orbitals, $H_N^+$ can have 2 or 3 orbitals. Now observe that we have: 
$$v_{11}v_{N+1,N+2}=a_{11}a_{12}=0$$

But this shows that we have $(1,N+1)\not\sim(1,N+2)$, and so the action of $H_N^+$ is not doubly transitive, and so we must have 3 orbitals, as claimed.

Note that this follows as well from the following computation:
$$\int\chi_v^2=\int\chi_p^2+2\int\chi_p\chi_u+\int\chi_u^2=2+0+1=3$$

(3) $k=3$. Here we cannot use direct algebraic arguments, because the algebraic 3-orbitals, even if they exist, are not nesessarily counted by the moments of the main character. Thus, we must integrate characters. Since $p$ is magic, we have:
$$\int\chi_p^2\geq2\quad,\quad\int\chi_p^3\geq5$$

Indeed, this follows from the representation theory of $S_N^+$, because $2,5$ are Bell numbers, counting respectively the partitions in $NC(2),NC(3)$. Now by using as well the fact that we have $p\in u^{\otimes2}$, from Proposition 8.4 above, we obtain that we have:
\begin{eqnarray*}
\int\chi_v^3
&=&\int\chi_p^3+3\int\chi_p^2\chi_u+3\int\chi_p\chi_u^2+\int\chi_u^3\\
&\geq&\int\chi_p^3+3\int\chi_p^2\chi_u+3\int\chi_p^2+\int\chi_u^3\\
&\geq&5+3\times0+3\times2+0\\
&=&11
\end{eqnarray*}

On the other hand, we know from Proposition 8.3 above that for $H_N$, the corresponding integral is 11. Thus, by functoriality, we obtain 11, as claimed.
\end{proof}

Thus, we must use $k=4$ in order to distinguish $H_N\subset H_N^+$. We have:

\begin{theorem}
The quantum groups $H_N\subset H_N^+\subset S_{2N}^+$ are not distinguished by their $1,2,3$-orbitals, but have different numbers of $4$-orbitals, namely $49>43$.
\end{theorem}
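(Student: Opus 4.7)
Propositions 8.3 and 8.5 together give $\int_{H_N}\chi_v^4=49$ and show that the first three moments of $\chi_v$ agree for $H_N$ and $H_N^+$; what remains is to compute $\int_{H_N^+}\chi_v^4=43$. The plan is to combine the sudoku decomposition $v\cong p\oplus u$, obtained by diagonalizing $v=\bigl(\begin{smallmatrix}a&b\\ b&a\end{smallmatrix}\bigr)$ via $\frac{1}{\sqrt 2}\bigl(\begin{smallmatrix}I&I\\ I&-I\end{smallmatrix}\bigr)$ (so that the blocks become $a+b=p$ and $a-b=u$), with the easiness of $H_N^+$ (Tannakian category $NC_{even}$) and the embedding $p\subset u^{\otimes 2}$ from Proposition 8.4. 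Since $H_N^+$ is of Kac type and permuting tensor factors yields isomorphic representations, one obtains
$$\int_{H_N^+}\chi_v^4=\sum_{a=0}^{4}\binom{4}{a}\int\chi_u^{4-a}\chi_p^a.$$

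Two of the five terms vanish by parity: each $\int\chi_u^{4-a}\chi_p^a=\dim Fix(u^{\otimes(4-a)}\otimes p^{\otimes a})$ embeds via $p\hookrightarrow u^{\otimes 2}$ into $Fix(u^{\otimes(4+a)})$, and $Fix(u^{\otimes n})=\mathrm{span}\{T_\pi:\pi\in NC_{even}(n)\}$ vanishes for odd $n$; this kills the $a=1$ and $a=3$ summands. The extreme terms are then immediate: $\int\chi_u^4=|NC_{even}(4)|=3$ by easiness, and $\int\chi_p^4=C_4=14$ because $p$ descends from $S_N^+$ via the Hopf inclusion $C(S_N^+)\hookrightarrow C(H_N^+)$, on which the Haar state of $H_N^+$ restricts to that of $S_N^+$. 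The crux is the mixed term $I=\int\chi_p^2\chi_u^2=\dim Fix(p^{\otimes 2}\otimes u^{\otimes 2})$, realized as the subspace of $Fix(u^{\otimes 6})$ (of dimension $|NC_{even}(6)|=C_3^{(2)}=12$) cut out by the linear constraints $\sum_{\pi\leq\tau,\,\pi\in NC_{even}(6)}c_\pi=0$ for each partition $\tau$ separating position $1$ from $2$ or $3$ from $4$. An alternative Peter--Weyl route decomposes $u^{\otimes 2}=1\oplus p_0\oplus W'$ (three distinct irreducibles, since $\dim\mathrm{End}(u^{\otimes 2})=3$) and decomposes $p^{\otimes 2}$ via the $SO(3)$-type fusion of $S_N^+$, then counts common irreducibles.

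Assembling the contributions yields the value $43$, strictly less than the classical count $49$. The main obstacle is the evaluation of the mixed term $I$. In the constraint-system approach one must enumerate the $12$ partitions in $NC_{even}(6)$, distinguish the ``good'' ones (those whose blocks contain both pairs $\{1,2\}$ and $\{3,4\}$) from the ``bad'' ones, trace the refinement relations among the bad partitions (several bad partitions of shape $4{+}2$ admit two NC-even refinements into three size-$2$ blocks, producing a sparse but non-trivially coupled linear system), and determine its rank. In the Peter--Weyl route one must pin down the residual $H_N^+$-irreducible $W'$ of $u^{\otimes 2}$, of dimension $N^2-N$, and reconcile it with the $S_N^+$-irreducible structure of $p^{\otimes 2}$ via a careful dimension-count.
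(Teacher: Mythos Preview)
Your expansion
\[
\int_{H_N^+}\chi_v^4=\sum_{a=0}^{4}\binom{4}{a}\int\chi_u^{4-a}\chi_p^a
\]
is the crux of the argument, and it is false. The justification you give---that ``permuting tensor factors yields isomorphic representations''---does not hold for $H_N^+$: its representation category is not symmetric, so $p\otimes u$ and $u\otimes p$ need not be isomorphic, and in fact they are not. The Haar state being tracial (Kac type) only buys you \emph{cyclic} invariance of the integrals, so the correct expansion is
\[
\int\chi_v^4=\int\chi_p^4+4\int\chi_p^3\chi_u+4\int\chi_p^2\chi_u^2+2\int(\chi_p\chi_u)^2+4\int\chi_p\chi_u^3+\int\chi_u^4,
\]
with the two middle quantities $\int\chi_p^2\chi_u^2$ and $\int(\chi_p\chi_u)^2$ kept separate. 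This is exactly what the paper does, and the entire content of the theorem lies in the fact that these two numbers differ for $H_N^+$ while they coincide for $H_N$. Your binomial formula lumps them together with coefficient $\binom{4}{2}=6$, so it cannot detect the discrepancy; indeed, with your claimed values $\int\chi_u^4=3$ and $\int\chi_p^4=14$, the expression $17+6I$ is never $43$ for integer $I$.

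A secondary issue: your argument that $\int_{H_N^+}\chi_p^4=C_4=14$ via a ``Hopf inclusion $C(S_N^+)\hookrightarrow C(H_N^+)$ on which the Haar state of $H_N^+$ restricts to that of $S_N^+$'' is not justified. There is a Hopf \emph{surjection} $C(H_N^+)\to C(S_N^+)$, but the claim that the subalgebra generated by the $p_{ij}$ is a copy of $C(S_N^+)$ carrying the restricted Haar state requires a conditional-expectation argument you have not supplied. Finally, you never actually evaluate the mixed term $I$; both routes you sketch are left as programmes.
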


\begin{proof}
By using the character formula $\chi_v=\chi_p+\chi_u$ from the proof of Proposition 8.3 above, and the trace property of the integration functional, we obtain:
$$\int\chi_v^4=\int\chi_p^4+4\int\chi_p^3\chi_u+\left(4\int\chi_p^2\chi_u^2+2\int\chi_u\chi_p\chi_u\chi_p\right)+4\int\chi_p\chi_u^3+\int\chi_u^4$$

Our claim is that the difference between $H_N,H_N^+$ comes from the quantity in the middle, which must decrease when $\chi_p,\chi_u$ do not commute. Indeed:

(1) Regarding $H_N$, we know from Proposition 8.3 that the result holds indeed. This can be recovered as well by using the above integral, as follows:
$$\int\chi_v^4=15+4\times 0+(4+2)\times 5+4\times0+4=49$$

Here we have used the fact that we have $1\in p\in u^{\otimes2}$, which gives:
$$\int\chi_p^2\chi_u^2=\#(1\in u^{\otimes 2}\otimes p^{\otimes 2})=\#(1\in p^{\otimes 3})=5$$

(2) Regarding now $H_N^+$, the point is that the quantity $\int\chi_p^2\chi_u^2$ can be computed as above, but the quantity $\int\chi_u\chi_p\chi_u\chi_p$ is no longer equal to it. In order to compute this latter quantity, observe that by using $p\in u^{\otimes2}$, and then $1\in p$, we obtain:
\begin{eqnarray*}
p^{\otimes2}
&\in&u^{\otimes 2}\otimes p\\
&=&u\otimes1\otimes u\otimes p\\
&\in&u\otimes p\otimes u\otimes p
\end{eqnarray*}

We therefore obtain the following estimate:
$$\int(\chi_u\chi_p)^2\geq\int\chi_p^2=2$$

On the other hand, it follows from the fusion rules computed in \cite{bve} that the reverse inequality holds as well. Thus, we have $2\times 3=6$ orbitals missing with respect to the $H_N$ case, and so we have a total of $49-6=43$ orbitals for $H_N^+$, as stated.
\end{proof}

\end{document}